\newtheorem{Theorem}{Theorem}[section]
\newtheorem{Lemma}[Theorem]{Lemma}
\newtheorem{Proposition}[Theorem]{Proposition}
\newtheorem{Remark}[Theorem]{Remark}
\newtheorem{Assumption}[Theorem]{Assumption}
\def\C {\mathbb C}
\def\R {\mathbb R}
\newcommand{\supp}{\operatorname{supp}}
\newcommand{\id}{\operatorname{Id}}
\newcommand{\p}{\partial}
\renewcommand{\Re}{\operatorname{Re}}
\renewcommand{\Im}{\operatorname{Im}}
\newcommand{\Span}{\operatorname{Span}}
\newcommand{\curl}{\operatorname{curl}}
\newcommand{\Div}{\operatorname{Div}}
\newcommand{\bt}{\operatorname{\mathbf t}}
\newcommand{\Spec}{\operatorname{Spec}}
\title{Unique recovery of electrical conductivity and magnetic
  permeability from Magneto-Telluric data}
\author{Yernat M. Assylbekov}
\address{Department of Computational and Applied Mathematics, Rice University, Houston, TX 77005, USA}
\email{yernat.assylbekov@gmail.com}
\author{Maarten V. de Hoop}
\address{Department of Computational and Applied Mathematics, Rice University, Houston, TX 77005, USA}
\email{mdehoop@rice.edu}
\begin{document}
\maketitle

\begin{abstract}
We present a comprehensive mathematical study of the Magneto-Telluric
(MT) method, on bounded domain in $\mathbb{R}^3$. We show that
electrical conductivity and magnetic permeability, assumed to be
$C^2$, can be uniquely recovered from MT data measured on the boundary
of the domain. The proof is based on the construction of complex
geometric optics solutions. Furthermore, we obtain a unique
determination result in the case when the MT data are measured only on
an open subset of the boundary. Here, we assume that the part of the
boundary inaccessible for measurements is a subset of a sphere.
\end{abstract}

\section{Introduction}\label{section::introduction}

The magnetotelluric (MT) method uses electromagnetic passive sources
in the magnetosphere and ionosphere to estimate electrical resistivity
in Earth's interior \cite{chave1991electrical}. This method was
introduced almost 70 years ago by Cagniard
\cite{cagniard1953basic}. The passive sources excite a certain
spectrum, that is, interval of frequencies. There is a long history of
studies based on some form of data fitting including a Bayesian
approach \cite{grandis1999bayesian}. It has been understood that high
frequency data enable determining the resistivity in the ``near''
subsurface, while low frequency ones enable obtaining an estimate of
the resistivity in the ``deep'' interior. The MT method has been used
to study melt and hydration in Earth's crust and mantle
\cite{rokityansky2012geoelectromagnetic, feucht2017magnetotelluric,
  evans1994electrical, rippe2013magnetotelluric}, and general
electrical structure in the lithosphere
\cite{malleswari2019magnetotelluric, thiel2008modelling} and below the
ocean floor \cite{cox1971electromagnetic}. Moreover, the MT method has
been applied to earthquake prediction \cite{chouliaras1988application}.

\smallskip

In this paper, we analyze the inverse problem for the MT method. We
let $\Omega \subset \R^3$ be an open bounded domain with $C^{1,1}$
boundary and let $\varepsilon, \sigma, \mu \in C^2(\overline\Omega)$
be non-negative. We consider the time-harmonic Maxwell equations for
electromagnetic fields, $E$ and $H$,
\begin{equation}\label{eqn::Maxwell}
   \nabla \times E = i \omega \mu H\quad\text{and}\quad
   \nabla \times H = -i \omega
          \Big(\varepsilon + \frac{i \sigma}{\omega}\Big) E
   \quad\text{in}\quad\Omega ,
\end{equation}
where $\omega > 0$ is a fixed frequency. The functions $\varepsilon$,
$\sigma$ and $\mu$ represent the material parameters, namely, electrical
permittivity, electrical conductivity and magnetic permeability,
respectively. In the case of MT, one invokes the following

\begin{Assumption} \label{MT assumption}
The electrical permittivity vanishes, $\varepsilon = 0$, and the
electrical conductivity and magnetic permeability satisfy $\sigma \ge
\sigma_0$, $\mu \ge \mu_0$ on $\overline\Omega$ for some constants
$\sigma_0, \mu_0 > 0$.
\end{Assumption}

\noindent
This assumption has its origin in the low-frequency reduction of
Maxwell's equations.

We let $\nu$ be the outer unit normal to the boundary $\p \Omega$, and
define the trace operator $\bt: C^\infty(\overline\Omega;\C^3) \to
C^\infty(\p\Omega;\C^3)$ as
$$
   \bt(u) := \nu \times u|_{\p\Omega}\quad\text{for}\quad
   u \in C^\infty(\overline\Omega;\C^3) .
$$
The trace $\bt$ can be extended to a bounded linear operator from
$H_{\Div}^1(\Omega)$ into $TH^{1/2}_{\Div}(\p\Omega)$, where
$$
   H^1_{\Div}(\Omega) := \left\{
   u \in H^1(\Omega;\C^3) : \bt(u) \in TH^{1/2}_{\Div}(\p\Omega)\right\}
$$
and
$$
   TH^{1/2}_{\Div}(\p\Omega) := \left\{f \in H^{1/2}(\p \Omega;\C^3) :
   \nu \cdot f = 0\quad\text{and}\quad
   \Div(f)\in H^{1/2}(\p \Omega;\C^3)\right\} ,
$$
and $\Div$ denotes the surface divergence on $\p\Omega$. We refer the
reader to \cite{ola1993inverse} for more details. Under
Assumption~\ref{MT assumption}, when $\omega > 0$ does not belong to a
discrete set of magnetic resonant frequencies, the equation
\eqref{eqn::Maxwell} with the boundary condition $\bt(H) = f \in
TH^{1/2}_{\Div}(\p\Omega)$ has a unique solution $(H,E) \in
H^1_{\Div}(\Omega)\times H^1_{\Div}(\Omega)$; see
Section~\ref{section::well posedness}. Then the impedance map
$Z^\omega_{\sigma,\mu}: TH^{1/2}_{\Div}(\p\Omega)\to
TH^{1/2}_{\Div}(\p\Omega)$ is defined as
$$
   Z^\omega_{\sigma,\mu}(f)
           := \bt(E),\quad f\in TH^{1/2}_{\Div}(\p\Omega) .
$$
The inverse MT problem (IMTP) is to determine $\sigma$ and $\mu$ from
the knowledge $Z^\omega_{\sigma,\mu}$. In the geophysics literature,
one commonly considers the resistivity, that is, the reciprocal of
conductivity \cite{weckmann2003images}.

\smallskip

\begin{Remark}
In practice, the data are represented as the graph of
$Z^\omega_{\sigma,\mu} : TH^{1/2}_{\Div}(\p\Omega) \to
TH^{1/2}_{\Div}(\p\Omega)$.
\end{Remark}

Our first main result is

\begin{Theorem}\label{main thm}
Let $\Omega\subset \R^3$ be a bounded domain with $C^{1,1}$ boundary
and let $\sigma_j,\mu_j\in C^2(\overline\Omega)$, $j=1,2$, be such
that $\sigma_j \ge \sigma_0$ and $\mu_j \ge \mu_0$ for some constants
$\sigma_0, \mu_0 > 0$. Suppose that $\omega > 0$ is not a resonant
frequency for $(\sigma_1,\mu_1)$ and $(\sigma_2,\mu_2)$ and that
\begin{equation}\label{boundary assumption}
   \p^\alpha \sigma_1|_{\p\Omega} = \p^\alpha \sigma_2|_{\p\Omega}
   \quad\text{and}\quad
   \p^\alpha \mu_1|_{\p\Omega} = \p^\alpha \mu_2|_{\p\Omega}
   \quad\text{for}\quad|\alpha| \le 2 .
\end{equation}
Then $Z_{\sigma_1,\mu_1}^\omega = Z_{\sigma_2,\mu_2}^\omega$ implies
that $\sigma_1 = \sigma_2$ and $\mu_1 = \mu_2$.
\end{Theorem}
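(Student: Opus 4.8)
The plan is to follow the complex geometric optics (CGO) strategy pioneered by Sylvester--Uhlmann for the scalar Calder\'on problem and adapted to Maxwell's equations by Ola--P\"aiv\"arinta--Somersalo. First I would reduce the first-order system \eqref{eqn::Maxwell} under Assumption~\ref{MT assumption} (so that $\varepsilon=0$ and the admittivity is $i\sigma/\omega$) to a second-order Schr\"odinger-type system for a $\C^8$-valued unknown. Concretely, using the symmetrizing substitutions $H\mapsto \mu^{1/2}H$ and $E\mapsto \sigma^{1/2}E$, and augmenting the six-component field $(E,H)$ by two scalar fields that carry the divergence constraints $\nabla\cdot(\sigma E)=0$ and $\nabla\cdot(\mu H)=0$, one obtains an equation of the form $(\Delta + Q_{\sigma,\mu})X = 0$ in $\Omega$, where the matrix potential $Q_{\sigma,\mu}$ depends on $\sigma$, $\mu$ and their derivatives up to second order (this is precisely where the $C^2$ hypothesis is used, through terms like $\nabla\log\mu$, $\Delta\log\mu$ and their $\sigma$-analogues). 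The hypothesis \eqref{boundary assumption}, that all derivatives up to order two agree on $\p\Omega$, guarantees that $Q_{\sigma_1,\mu_1}-Q_{\sigma_2,\mu_2}$ extends by zero to a compactly supported potential on $\R^3$, which is essential for the interior argument.

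Second, I would construct CGO solutions to the reduced system. For $\zeta\in\C^3$ with $\zeta\cdot\zeta=0$ and $|\zeta|$ large, I seek $X = e^{x\cdot\zeta}(A + R_\zeta)$, where $A$ is a constant amplitude adapted to the principal symbol and $R_\zeta$ is a remainder with $\|R_\zeta\|_{L^2(\Omega)} = O(|\zeta|^{-1})$. The remainder is controlled by the Faddeev/Sylvester--Uhlmann resolvent estimate $\|(\Delta + 2\zeta\cdot\nabla)^{-1}\|_{L^2\to L^2} = O(|\zeta|^{-1})$ together with a Neumann-series argument absorbing $Q_{\sigma,\mu}$ as a perturbation.

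Third, I would derive the boundary integral identity. Equality of the impedance maps $Z^\omega_{\sigma_1,\mu_1}=Z^\omega_{\sigma_2,\mu_2}$, combined with a Green's identity for the reduced operator, should yield
\[
   \int_\Omega \big(Q_{\sigma_1,\mu_1}-Q_{\sigma_2,\mu_2}\big)\, X_1\cdot X_2^\sharp\, dx = 0
\]
for every CGO solution $X_1$ of the first system and every CGO solution $X_2^\sharp$ of the formally \emph{adjoint} second system. Choosing $\zeta_1,\zeta_2$ so that $\zeta_1+\zeta_2 = i\xi$ is fixed while $|\zeta_1|,|\zeta_2|\to\infty$, the exponential prefactors combine into $e^{ix\cdot\xi}$ and the remainders drop out in the limit, leaving the statement that the Fourier transform at $\xi$ of a linear combination of $\sigma_1-\sigma_2$, $\mu_1-\mu_2$ and their derivatives vanishes.

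Finally, I would exploit this identity for all $\xi\in\R^3$ and for several independent choices of the amplitudes $A_1,A_2$ (i.e.\ different polarizations compatible with $\zeta_j\cdot\zeta_j=0$). The hard part will be here: since the two coefficients $\sigma$ and $\mu$ must be recovered simultaneously, the limiting identity couples $\widehat{\sigma_1-\sigma_2}$ and $\widehat{\mu_1-\mu_2}$, and the amplitudes have to be selected so as to generate enough linearly independent scalar equations to decouple them (a nontrivial task governed by the algebraic structure of $\zeta\cdot\zeta=0$ in $\C^3$). Once decoupled, one concludes $\widehat{\sigma_1-\sigma_2}=\widehat{\mu_1-\mu_2}=0$ for all $\xi$, whence $\sigma_1=\sigma_2$ and $\mu_1=\mu_2$ by Fourier inversion.
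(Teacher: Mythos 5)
Your proposal rests entirely on the Ola--P\"aiv\"arinta--Somersalo elliptization: augment the rescaled six-component field to an eight-component unknown $X$ and claim the MT system becomes $(\Delta + Q_{\sigma,\mu})X = 0$ with a \emph{zeroth-order} matrix potential $Q_{\sigma,\mu}$. That claimed reduction is exactly the step that fails here, and it is the central difficulty of the MT problem as opposed to the IEMP: as the paper states in the introduction, the elliptization argument of Ola--Somersalo, applied to \eqref{eqn::Maxwell} under Assumption~\ref{MT assumption} (where $\varepsilon=0$ and the admittivity $i\sigma/\omega$ is purely imaginary), does not remove the first-order terms. The paper therefore abandons the $8\times 8$ augmentation altogether and works with the curl-curl equation for $H$ alone, whose Laplacian-principal-part form \eqref{eqn3-1} retains the first-order terms $\nabla(\nabla\beta\cdot H)$ and $\nabla\alpha\times\nabla\times H$; these are absorbed not into a potential but into \emph{non-constant} amplitudes solving a transport ($\overline\p$-type) equation, in the spirit of Sun--Uhlmann and Colton--P\"aiv\"arinta, with the extra complication that $\sigma,\mu\in C^2$ forces a mollification ($\alpha^\sharp$, $\beta^\sharp$, $\Psi^\sharp$) with $\tau$-dependent estimates. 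Consequently the remainder bounds are only $\|r\|_{L^2}=o(1)$ and $\|\nabla_\zeta\times r\|_{L^2}=o(\tau)$ (Proposition~\ref{prop CGO}), not the $O(|\zeta|^{-1})$ your Neumann-series step presumes: with first-order terms present, the perturbation is of size $O(|\zeta|)$ against a resolvent gain of only $O(|\zeta|^{-1})$, so there is nothing to iterate against. A second (related) error: since $\sigma\ge\sigma_0>0$ everywhere, the background wave number does not vanish, so the phases must satisfy $\zeta\cdot\zeta=i\omega\sigma_0\mu_0$, not $\zeta\cdot\zeta=0$; with your normalization the zeroth-order term $i\omega\sigma\mu$ is not compactly supported and the perturbative construction breaks down even before the first-order terms enter.

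The part you yourself flag as ``the hard part'' --- decoupling $\widehat{\sigma_1-\sigma_2}$ from $\widehat{\mu_1-\mu_2}$ --- is left unresolved in your sketch, and in the paper it is resolved by the specific structure of the transport amplitudes rather than by linear algebra on constant polarizations. The paper derives the identity \eqref{main integral identity} directly in terms of the physical fields by two integrations by parts (no adjoint system, no eight-component Green identity, which in your setup would additionally require relating the traces of the augmented scalar components to the impedance data). Substituting $H_j=e^{i\zeta^j\cdot x}(a_j\rho\pm\tfrac12 b_j\overline\rho+r_j)$ and dividing by $\tau^2$, the curls of the $\overline\rho$-polarized parts each contribute a factor $\tau b_j\,\rho_1\times\rho_2$, so the only term surviving at order $\tau^2$ is the one carrying $b_1b_2$ and the $\sigma$-difference; this yields $\int e^{i\xi\cdot x}\,(\sigma_1-\sigma_2)(\sigma_1\sigma_2)^{-1/2}e^{\Psi_1+\Psi_2}\,dx=0$ and hence $\sigma_1=\sigma_2$ first. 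Only then, re-running the argument with the pure $\rho$-polarization for $H_1$ (taking $s_0=0$) isolates $\mu_1-\mu_2$ and gives $\mu_1=\mu_2$. Note also that these limiting identities are not plain Fourier transforms of the coefficient differences: they carry the nonvanishing weights $e^{\Psi_1+\Psi_2}$ inherited from the transport equation, so the conclusion is not mere Fourier inversion. In short, your outline imports the IEMP machinery wholesale, but the two steps where MT genuinely differs --- eliminating (or rather, living with) the first-order terms, and sequentially disentangling the two coefficients --- are precisely the ones your proposal assumes away.
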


\noindent
Condition \eqref{boundary assumption} is not important. We expect that
a suitable boundary determination result would allow to remove it as
in \cite{caro2019boundary, joshi2000total, mcdowall1997boundary}. See
also Remark~\ref{remark in Appendix B}.

\smallskip

The inverse problem considered in the present paper formally looks
like a standard inverse electromagnetic problem (IEMP) proposed in
\cite{somersalo1992linearized}: Determine $\varepsilon$, $\mu$ and
$\sigma$, from the knowledge of the admittance map
$\Lambda^\omega_{\varepsilon,\sigma,\mu} : \bt(E) \mapsto \bt(H)$ for
all $(E,H) \in H_{\Div}^1(\Omega) \times H_{\Div}^1(\Omega)$ solving
\eqref{eqn::Maxwell}. However, the conditions in the IEMP do not allow
the vanishing of $\varepsilon$ as in our Assumption~\ref{MT
  assumption}. To be precise, IEMP invokes

\begin{Assumption} \label{EM assumption}
The electrical permittivity, electrical conductivity and magnetic
permeability satisfy $\varepsilon \ge \varepsilon_0$, $\mu \ge \mu_0$
and $\sigma \ge 0$ on $\overline\Omega$ for some constants
$\varepsilon_0, \mu_0 > 0$.
\end{Assumption}

\noindent
As a consequence, the IMTP and IEMP problems are in fact
different. For comparison, we also mention the EIT, or inverse
conductivity problem, also known as electrical resistivity tomography
(ERT) in the geophysics literature, proposed in
\cite{calderon2006inverse}: Determine $\sigma$, satisfying $\sigma \ge
\sigma_0$ for some constant $\sigma_0 > 0$, from the
Dirichlet-to-Neumann map $u|_{\p\Omega} \mapsto \sigma\p_\nu
u|_{\p\Omega}$ for all $u \in H^1(\Omega)$ solving the conductivity
equation $\nabla \cdot (\sigma \nabla u) = 0$ in $\p\Omega$. A
low-frequency limit of IEMP as in \cite{lassas1997impedance} will not
converge to EIT. However, one expects that a low-frequency limit of
IMTP meaningfully relates to EIT.

\smallskip

We now give a brief overview of results pertaining to the IEMP and its
history. The standard approach to solve this problem is to
construct a family of exponentially growing solutions, also known as
\emph{complex geometric optics solutions}, following the celebrated
paper~\cite{sylvester1987global} on the inverse conductivity
problem. One of the main challenges in adopting the method of
\cite{sylvester1987global} is the fact that \eqref{eqn::Maxwell} with
Assumption~\ref{EM assumption} is not elliptic. The linearized problem
at constant material parameters was studied in
\cite{somersalo1992linearized}. For the nonlinear problem, a
uniqueness result was given in \cite{sun1992inverse} when the
electromagnetic parameters are close to constants. In this paper, to
get ellipticity, equation \eqref{eqn::Maxwell} with Assumption~\ref{EM
  assumption} was reduced to a system whose principal part is the
Laplacian. However, this reduction gives some first order terms. For
material parameters that are nearly constant, the authors were able to
manage the first-order terms and introduce complex geometrical solutions
for \eqref{eqn::Maxwell}. The first global uniqueness result was
proven in~\cite{ola1993inverse}. This proof was later simplified
in~\cite{ola1996electromagnetic}. The important point in the
simplified proof is to augment \eqref{eqn::Maxwell} with
Assumption~\ref{EM assumption} to a certain $8 \times 8$ Dirac
equation and connect it via some other Dirac operator to an $8 \times
8$ system whose principal part is the Laplacian while its remainder
involves only zeroth-order terms. This allowed the authors to
construct complex geometric optics solutions for the latter system and
connect them to \eqref{eqn::Maxwell} with Assumption~\ref{EM
  assumption} by applying the Dirac operator that was initially
introduced. This technique became popular in the subsequent works on
various aspects of IEMP~\cite{assylbekov2017kerrinverse,
  caro2014global, kenig2011inverse}.

\smallskip

In the setting of the IMTP, however, one cannot simply employ the
complex geometric optics solutions constructed in
\cite{ola1993inverse, ola1996electromagnetic} for the IEMP. Moreover,
the elliptization argument of \cite{ola1996electromagnetic}, applied
to \eqref{eqn::Maxwell} with Assumption~\ref{MT assumption}, does not
help avoiding first-order terms. Instead of that, we
follow~\cite{sun1992inverse} and reduce \eqref{eqn::Maxwell} with
Assumption~\ref{MT assumption} to a system whose principal part is the
Laplacian. We then introduce novel complex geometrical optics
solutions for the reduced system that are essentially solutions for
\eqref{eqn::Maxwell} with Assumption~\ref{MT assumption}. Moreover,
using this reduction gives an integral identity with a clear relation
to \eqref{eqn::Maxwell}. To deal with the first-order terms, we use
the ideas from \cite{colton1992uniqueness} with substantial
modifications since the latter paper assumes that $\mu$ is constant.

\smallskip

In the MT method, performing measurements on the entire boundary (that
is, the surface of the earth) is impossible. Therefore, the analysis
of the inverse problem with local measurements is important. We can
assume that the measurements are performed on a nonempty open subset
$\Gamma$ of $\p\Omega$ only and that the inaccessible part of the
boundary $\Gamma_0 = \overline{\p\Omega \setminus \Gamma}$ is a part
of a sphere (our planet's surface). Our second main result is the
following

\begin{Theorem}\label{main thm 2}
Let $\Omega \subset B_0$ be a bounded domain with $C^{1,1}$ boundary
included in an open ball $B_0 \subset \R^3$ and let $\Gamma_0 =
\p\Omega \cap \p B_0$, $\Gamma_0 \neq \p B_0$ and $\Gamma =
\overline{\p\Omega \setminus \Gamma_0}$. Suppose that $\sigma_j, \mu_j
\in C^2(\overline\Omega)$, $j=1,2$, satisfy $\sigma_j \ge \sigma_0$
and $\mu_j \ge \mu_0$, for some constants $\sigma_0, \mu_0 > 0$, and
\begin{equation}\label{boundary assumption on Gamma}
   \p^\alpha \sigma_1|_{\Gamma} = \p^\alpha \sigma_2|_{\Gamma}
   \quad\text{and}\quad
   \p^\alpha \mu_1|_{\Gamma} = \p^\alpha \mu_2|_{\Gamma}
   \quad\text{for}\quad|\alpha| \le 2 .
\end{equation}
In addition, assume that $\sigma_j$ and $\mu_j$, $j=1,2$, can be
extended to $\R^3$ as $C^2$ functions which are invariant under
reflection across $\p B_0$. Suppose that $\omega > 0$ is not a
resonant frequency for $(\sigma_1,\mu_1)$ and $(\sigma_2,\mu_2)$. If
$$
   Z_{\sigma_1,\mu_1}^\omega(f)|_{\Gamma}
             = Z_{\sigma_2,\mu_2}^\omega(f)|_{\Gamma}
   \quad\text{for all}\quad f \in TH^{1/2}_{\Div}(\p\Omega)
   \quad\text{with}\quad \supp(f)\subset\Gamma ,
$$
then $\sigma_1 = \sigma_2$ and $\mu_1 = \mu_2$.
\end{Theorem}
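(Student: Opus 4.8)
The plan is to use reflection across the sphere $\partial B_0$ to double the domain and thereby reduce the partial-data statement to the full-data result of Theorem~\ref{main thm}. Write $B_0 = B(c,\rho)$ and let $R(x) = c + \rho^2(x-c)/|x-c|^2$ be the inversion in $\partial B_0$, a conformal involution of $\R^3\setminus\{c\}$ that fixes $\partial B_0$ pointwise and interchanges the two sides of the sphere. Set $\Omega^* = R(\Omega)$ and $\widetilde\Omega = \Omega\cup\Gamma_0\cup\Omega^*$, so that $\partial\widetilde\Omega = \Gamma\cup R(\Gamma)$ and $\Gamma_0$ becomes an interior interface. The reflection-invariant $C^2$ extension hypothesis, read with the conformal weight produced by the transformation law below, lets $\sigma_j,\mu_j$ glue to $C^2$ coefficients $\widetilde\sigma_j,\widetilde\mu_j$ on $\widetilde\Omega$; the matching condition \eqref{boundary assumption on Gamma} on $\Gamma$ then propagates to $R(\Gamma)$, giving \eqref{boundary assumption} on all of $\partial\widetilde\Omega$.

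The core geometric fact is that \eqref{eqn::Maxwell} with $\varepsilon = 0$ is covariant under $R$. In the flat model (reflection $S = \id - 2\nu\nu^{\mathsf T}$ across a plane), if $(E,H)$ solves \eqref{eqn::Maxwell} then so does $(E',H')$ with $E'(x) = S\,E(Sx)$ and $H'(x) = -S\,H(Sx)$ — that is, $E$ reflects as a polar field and $H$ as an axial field — provided $\sigma,\mu$ are reflection-invariant; for the sphere one replaces $S$ by $R$ and carries the conformal factor $\lambda = \rho^2/|x-c|^2$, which equals $1$ on $\partial B_0$. I would verify that across $\Gamma_0$ this reflection preserves $\bt(E)$ and the normal component of $\mu H$, while it flips $\bt(H)$. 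The crucial point is that the measured fluxes are generated by $f$ with $\supp(f)\subset\Gamma$, i.e.\ $\bt(H)|_{\Gamma_0} = 0$; for exactly these solutions the flip is harmless, both Maxwell transmission conditions hold, and the even reflection extends $(E_j,H_j)$ to a genuine weak solution $(\widetilde E_j,\widetilde H_j)$ of the doubled system on $\widetilde\Omega$.

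With this extension in hand, the hypothesis does the rest: on $\Gamma$ the two doubled solutions share the data $\bt(\widetilde H_j) = f$ and, by $Z^\omega_{\sigma_1,\mu_1}(f)|_\Gamma = Z^\omega_{\sigma_2,\mu_2}(f)|_\Gamma$, also $\bt(\widetilde E_1)|_\Gamma = \bt(\widetilde E_2)|_\Gamma$; reflecting, the same equalities hold on $R(\Gamma)$. Hence the full impedance maps of $\widetilde\Omega$ for $(\widetilde\sigma_1,\widetilde\mu_1)$ and $(\widetilde\sigma_2,\widetilde\mu_2)$ agree on every reflection-even boundary datum. Feeding such data into the integral identity of Theorem~\ref{main thm} on $\widetilde\Omega$ makes the boundary contribution vanish, yielding $\int_{\widetilde\Omega}[(\widetilde\sigma_1-\widetilde\sigma_2)(\cdots) + (\widetilde\mu_1-\widetilde\mu_2)(\cdots)] = 0$ for all reflection-even solutions. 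Since $\widetilde\sigma_1-\widetilde\sigma_2$ and $\widetilde\mu_1-\widetilde\mu_2$ are themselves reflection-even, it suffices to test against reflection-even complex geometric optics solutions, which I would obtain by symmetrizing the solutions of Theorem~\ref{main thm}: $u\mapsto u + \mathcal R u$, where $\mathcal R$ denotes the field reflection above. Recovering the Fourier transforms of the even differences then forces $\widetilde\sigma_1 = \widetilde\sigma_2$ and $\widetilde\mu_1 = \widetilde\mu_2$ on $\widetilde\Omega$, hence $\sigma_1 = \sigma_2$ and $\mu_1 = \mu_2$ on $\Omega$.

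I expect the main obstacle to lie in the last step: the symmetrized solution $u + \mathcal R u$ must still isolate a single Fourier mode, so one has to show that the reflected term $\mathcal R u$ contributes only lower-order corrections to the relevant products — which requires revisiting the construction and estimates behind Theorem~\ref{main thm} and controlling them under the inversion $R$ and its conformal weight. A secondary, more technical difficulty is that $\partial\widetilde\Omega = \Gamma\cup R(\Gamma)$ is in general only Lipschitz along the edge $\partial\Gamma_0$, so that $\widetilde\Omega$ is not literally a $C^{1,1}$ domain; since this edge has measure zero I expect the trace identities and Green's formula underlying the integral identity to persist, but verifying that the proof of Theorem~\ref{main thm} survives on $\widetilde\Omega$ is the delicate part.
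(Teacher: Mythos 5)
Your strategy belongs to the right family (Isakov-type reflection combined with CGO solutions), but carrying out the reflection directly across the sphere breaks down at two concrete points, and the second is exactly the obstacle you flag at the end without resolving. First, the doubled problem does not have admissible coefficients. The covariance of \eqref{eqn::Maxwell} under the inversion $R$ is \emph{conformal}: writing $B_0 = B(c,r_0)$ and $\lambda_R(x) = r_0^2/|x-c|^2$, the reflected field $\mathcal R u$ solves Maxwell's equations on $\Omega^* = R(\Omega)$ with coefficients $\lambda_R\,(\sigma\circ R)$ and $\lambda_R\,(\mu\circ R)$, not $\sigma\circ R$ and $\mu\circ R$ (this is the same computation as in Lemma~\ref{invariance of maxwell equation under pullback}, where the weight appears as $|y|^{-2}$). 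So you face a dichotomy: either you glue the reflection-invariant $C^2$ extensions themselves, in which case $\mathcal R u$ does \emph{not} solve the glued system; or you glue with the weight, i.e.\ take $\sigma$ on $\Omega$ and $\lambda_R\sigma$ on $\Omega^*$. In the second case the values match on $\Gamma_0$ (since $\lambda_R=1$ there) but the normal derivatives cannot: invariance forces $\p_\nu\sigma = 0$ on $\p B_0$, whereas $\p_\nu(\lambda_R\sigma) = -(2/r_0)\sigma \le -2\sigma_0/r_0 < 0$, so the glued coefficients are Lipschitz but never $C^1$, and neither Theorem~\ref{main thm}, nor its integral identity, nor the CGO construction of Proposition~\ref{prop CGO} (all of which require $C^2$ coefficients) applies on $\widetilde\Omega$. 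Second, the cross terms really cannot be made ``lower order''. In the planar model the reflected CGO still has a \emph{linear} phase, so one may choose $\Im\rho$ tangent to the reflecting plane (in the paper's notation $\rho_{2,3}=0$), which makes the exponential growth of the mixed products cancel identically, while $\rho_{1,3}\neq 0$ sends the real frequency $\tilde\xi_\pm$ to infinity and the Riemann--Lebesgue lemma kills those terms; this is precisely how Section~\ref{section::reflection approach} argues. Under the spherical inversion the mixed products carry $e^{i\zeta^1\cdot x - i\zeta^2\cdot R(x)}$, whose exponent has real part $\mp\tau\,\Im\rho\cdot(x - R(x)) + \mathcal O(1)$; since $x - R(x)$ is parallel to $x-c$, the function $\Im\rho\cdot(x-R(x))$ vanishes only on $\p B_0$ and is not identically zero on $\Omega$ for any admissible $\Im\rho$, so at least one of the two cross terms grows like $e^{c\tau}$ on an open subset of $\Omega$ -- and on top of that the phase is nonlinear, so no single Fourier mode is isolated. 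This is not an estimate to be ``revisited''; it is where the route fails.

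The paper's proof therefore never reflects across the sphere. In Section~\ref{section::proof of thm 2} it uses the Kelvin-type conformal map $F$ to transplant the \emph{whole problem} to a domain $\widetilde\Omega\subset\{x_3<0\}$ whose inaccessible boundary lies in the plane $\{x_3=0\}$: Lemma~\ref{invariance of maxwell equation under pullback} shows the system becomes Maxwell with coefficients $|y|^{-2}\widetilde\sigma_j$, $|y|^{-2}\widetilde\mu_j$ (the conformal weight is absorbed once and for all into new globally $C^2$ coefficients, since there is no interface to glue across), Lemma~\ref{impedance map under pullback} transports the partial impedance data, and invariance under reflection in $\p B_0$ becomes invariance under the planar reflection. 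The planar case, Theorem~\ref{main thm flat}, is then proved by essentially the symmetrization you propose, but with planar reflections, where it works. Note also that the integral identity there, Proposition~\ref{prop main integral identity local}, is kept on $\Omega$ itself and tested against antisymmetrized CGOs whose tangential trace is supported in $\Gamma$, so the doubled domain -- and the Lipschitz corner along $\p\Gamma_0$ that worries you -- never enters the argument. If you want to repair your write-up along these lines: first pull back by the Kelvin transform, then reflect.
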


\noindent
For the proof of Theorem~\ref{main thm}, we follow Isakov's reflection
approach \cite{isakov2007uniqueness} which was originally proposed for
the inverse conductivity problem. An analogous result for IEMP was
obtained in \cite{caro2009inverse}.

\smallskip

We briefly describe a connection of our results to the land-based CSEM
(controlled source electromagnetic) method in geophysical exploration
\cite{streich2016controlled}. Contrary to the MT method, the CSEM
method employs active sources. In recent work by Schaller \textit{et
  al.}~\cite{schaller2018land}, a land-based CSEM survey was designed
and performed at the Schoonebeek oil field. The application of
land-based CSEM for low-cost ${\rm CO}_2$ monitoring was studied in
\cite{mcaliley2019analysis}. The marine CSEM method, which was
introduced by Cox \textit{et al.}~\cite{cox1971electromagnetic}, would
require a careful incorporation of the ocean layer in the analysis,
which we do not pursue in this paper. It has successfull applications
in direct identification of hydrocarbons \cite{chave1991electrical,
  eidesmo2002sea}, and the study of the oceanic lithosphere and active
spreading centers \cite{chave1990some, constable1996marine,
  cox1986controlled, evans1994electrical, macgregor2002use,
  young1981electromagnetic}. For a more detailed exposition of
progress made on the marine CSEM, we refer to a review paper by
Constable \cite{constable2010ten}. Various basic data fitting
approaches have been developed for CSEM \cite{abubakar20082,
  gribenko2011joint, li20072dp1, li20072dp2}. From a mathematical
point of view, the data for the land-based CSEM is modeled by point
source measurements. More precisely, for an arbitrary unit vector
$\alpha$ and $y \in \p\Omega$, consider the equation
$$
   \nabla_x \times E_\alpha(x,y) = i \omega \mu(x) H_\alpha(x,y)
   + \delta(x-y) \nu(y) \times \alpha
   \quad\text{and}\quad
   \nabla_x \times H_\alpha(x,y) = \sigma(x) E_\alpha(x,y)
   \quad\text{in}\quad\R^3 ,
$$
with the outgoing radiation condition. The equation governs the
electromagnetic field of a magnetic dipole (active source) tangential
to the boundary $\p\Omega$. Then the inverse problem for the
land-based CSEM is to determine $\sigma$ and $\mu$ from
$$
   \mathcal A_{\sigma,\mu} := \left\{\big(\nu(x) \times H_{e_j}(x,y) ,\
   \nu(x) \times E_{e_j}(x,y)\big) : x, y \in\p\Omega ,\quad
   x \neq y ,\quad j=1,2,3\right\} ,
$$
where $e_j$, $j=1,2,3$, denote the Cartesian coordinate vectors. We
expect that following the arguments similar to
\cite{ola1996electromagnetic}, one can show that to the knowledge of
$\mathcal A_{\sigma,\mu}$ is equivalent the knowledge of the graph of
$Z^\omega_{\sigma,\mu}$ via layer potentials. Then the land-based CSEM
and MT would concern the same inverse problem with boundary
data.

\smallskip

The paper is organized as follows. In Section~\ref{section::well
  posedness}, we prove the well-posedness of the direct problem using
standard arguments. In Section~\ref{section::CGOs}, we first rewrite
\eqref{eqn::Maxwell} as the curl-curl equation and then construct
complex geometric optics solutions for it. We use these solutions to
prove Theorem~\ref{main thm} in Section~\ref{section::proof of main
  thm}. Next, in Section~\ref{section::reflection approach} we perform
the reflection approach of Isakov \cite{isakov2007uniqueness} and
prove an analog of Theorem~\ref{main thm 2} but in the case when the
part of the boundary inaccessible for measurements is a subset of the
plane $\{x\in\R^3 : x_3=0\}$. Theorem~\ref{main thm 2} is then proved
in Section~\ref{section::proof of thm 2} by analyzing the behavior of
\eqref{eqn::Maxwell} under the Kelvin
transform. Appendix~\ref{section::pullbacks} contains properties of
pullbacks used in the main text. Finally, in
Appendix~\ref{section::Appendix B} we show that the impedance map
$Z^\omega_{\sigma,\mu}$ is a pseudodifferential operator of order $1$
if $\sigma, \mu \in C^\infty(\overline\Omega)$. Using this fact, we
gain insight in the notion of apparent resistivity used in geophysics
from a mathematical point of view.

\section{Well-posedness of the direct problem}
\label{section::well posedness}

Let $\Omega\subset\R^3$ be a bounded domain with $C^{1,1}$ boundary as
before, and let $\sigma,\mu\in C^1(\overline\Omega)$ be such that
$\sigma \ge \sigma_0$ and $\mu \ge \mu_0$ for some constants
$\sigma_0, \mu_0 > 0$. Consider the following system of equations for
electromagnetic fields $E$ and $H$:
\begin{equation}\label{eqn::Maxwell homogenous in appendix}
\nabla\times E=i\omega\mu H\quad\text{and}\quad \nabla\times H=\sigma E \quad\text{in}\quad\Omega,
\end{equation}
with the tangential boundary condition $\mathbf{t}(H)=f$, where
$\omega$ is a complex number. The main result of the present section is

\begin{Theorem}\label{thm::well posedness new version homogeneous}
Let $\Omega\subset\R^3$ be a bounded domain with $C^{1,1}$ boundary and let $\sigma,\mu\in C^1(\overline\Omega)$ be such that $\sigma \ge \sigma_0$ and $\mu \ge \mu_0$ for some constants $\sigma_0, \mu_0 > 0$. There is a discrete subset $\Sigma$ of $\C$ such that for all $\omega\notin \Sigma$ and for a given $f\in TH_{\Div}^{1/2}(\p \Omega)$ the system \eqref{eqn::Maxwell homogenous in appendix} with $\mathbf t(H)=f$ has a unique solution $(E,H)\in H_{\Div}^1(\Omega)\times H_{\Div}^1(\Omega)$ satisfying
$$
\|E\|_{H_{\Div}^1(\Omega)}+\|H\|_{H_{\Div}^1(\Omega)}\le C\|f\|_{TH_{\Div}^{1/2}(\p \Omega)}
$$
for some constant $C>0$ independent of $f$.
\end{Theorem}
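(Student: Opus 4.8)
The plan is to eliminate $E$, reduce \eqref{eqn::Maxwell homogenous in appendix} to a single second-order equation for $H$, and then run analytic Fredholm theory in the complex parameter $\omega$. Since $\sigma\ge\sigma_0>0$, the second equation lets me set $E=\sigma^{-1}\nabla\times H$; substituting into the first equation turns the system into the curl-curl boundary value problem
$$
   \nabla\times(\sigma^{-1}\nabla\times H)=i\omega\mu H
   \quad\text{in}\quad\Omega,\qquad \nu\times H|_{\p\Omega}=f .
$$
Conversely, any $H$ solving this recovers a solution $(E,H)$ of the first-order system, so the two formulations are equivalent. Taking the divergence of the first equation shows that every solution satisfies the gauge condition $\nabla\cdot(\mu H)=0$ (for $\omega\neq0$; the single value $\omega=0$ will be absorbed into the exceptional set). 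First I would homogenize the boundary data by choosing a lift $H_f\in H^1(\Omega;\C^3)$ with $\nu\times H_f|_{\p\Omega}=f$ and $\|H_f\|_{H^1}\lesssim\|f\|_{TH^{1/2}_{\Div}(\p\Omega)}$, and write $H=H_f+w$ with $\nu\times w|_{\p\Omega}=0$.

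The natural sesquilinear form $a_\omega(w,v)=\int_\Omega\sigma^{-1}(\nabla\times w)\cdot\overline{\nabla\times v}\,dx-i\omega\int_\Omega\mu\,w\cdot\overline v\,dx$ is \emph{not} coercive on $H(\curl,\Omega)$: its principal part annihilates gradients, so the compact embedding into $L^2$ that would render the $\omega$-term a compact perturbation is lost. The crux is to restore this compactness by building the gauge condition into the function space, working on
$$
   X=\{w\in H(\curl,\Omega)\cap H(\Div,\Omega):\ \nabla\cdot(\mu w)=0,\ \nu\times w|_{\p\Omega}=0\} .
$$
Testing the weak form against gradients $v=\nabla\phi$ shows that the divergence constraint is automatically forced, so the variational problem on $X$ is equivalent to the original one. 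On $X$ the classical compact embedding $X\hookrightarrow L^2(\Omega;\C^3)$ (Weber--Weck) holds, and a Gaffney--Friedrichs inequality shows that $\|\nabla\times w\|_{L^2}$ controls $\|w\|_{H^1}$ modulo lower-order terms and a finite-dimensional space of harmonic fields. Consequently $a_\omega$ satisfies a G\aa rding inequality on $X$: it is coercive plus compact.

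I expect this compactness/ellipticity step to be the main obstacle; everything downstream is routine. Representing the variational problem through the Riesz map gives an operator equation $(\id+K(\omega))w=g(\omega)$ on $X$, with $K(\omega)$ compact and depending analytically---indeed affinely---on $\omega\in\C$, and with $g(\omega)$ analytic. At $\omega_0=it$ with $t>0$ one has $-i\omega_0=t$, so
$$
   a_{\omega_0}(w,w)\ge \|\sigma\|_{L^\infty}^{-1}\|\nabla\times w\|_{L^2}^2+t\,\mu_0\|w\|_{L^2}^2 ,
$$
which by the Gaffney--Friedrichs bound is coercive on $X$; hence $\id+K(\omega_0)$ is invertible and the resolvent set is nonempty. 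The analytic Fredholm theorem then produces a discrete set $\Sigma\subset\C$ off which $\id+K(\omega)$ is boundedly invertible, yielding existence and uniqueness of $w$ for $\omega\notin\Sigma$, together with $\|w\|_X\lesssim\|f\|_{TH^{1/2}_{\Div}(\p\Omega)}$ by the open mapping theorem.

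Finally I would upgrade regularity and return to the first-order fields. Standard $\curl$-$\Div$ elliptic estimates on the $C^{1,1}$ domain, using $\sigma,\mu\in C^1(\overline\Omega)$, promote $H$ from $H(\curl)\cap H(\Div)$ to $H^1(\Omega;\C^3)$; then $E=\sigma^{-1}\nabla\times H$ is defined and, since $\nabla\times E=i\omega\mu H\in H^1$ and $\nabla\cdot(\sigma E)=0$, the same div-curl regularity places $E$ in $H^1$ as well. A check that the tangential traces lie in $TH^{1/2}_{\Div}(\p\Omega)$ then gives $(E,H)\in H^1_{\Div}(\Omega)\times H^1_{\Div}(\Omega)$ with the asserted estimate.
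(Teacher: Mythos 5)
Your proposal follows the same skeleton as the paper's proof: reduce to the curl--curl equation, restore compactness via the $\mu$-solenoidal gauge and Weber's embedding, obtain the discrete set $\Sigma$ from a Fredholm/spectral argument anchored at a parameter value where the form is coercive, and finish with div--curl regularity on the $C^{1,1}$ domain. However, there is a genuine gap at the sentence ``Testing the weak form against gradients $v=\nabla\phi$ shows that the divergence constraint is automatically forced, so the variational problem on $X$ is equivalent to the original one.''

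What the gradient test actually forces is $\nabla\cdot(\mu H)=0$ for the full field $H=H_f+w$, \emph{not} $\nabla\cdot(\mu w)=0$; since a generic lift satisfies $\nabla\cdot(\mu H_f)\neq 0$, the unknown $w=H-H_f$ does not belong to your space $X$, and the equivalence fails exactly in the direction needed for existence. Concretely, if $w\in X$ solves the restricted problem $a_\omega(w,v)=G(v)$ for all $v\in X$, where
$$
G(v)=-\int_\Omega\sigma^{-1}(\nabla\times H_f)\cdot\overline{\nabla\times v}\,dx
+i\omega\int_\Omega\mu\, H_f\cdot\overline v\,dx,
$$
then for $v=\nabla\phi$ with $\phi\in H_0^1(\Omega)$ the left-hand side vanishes (because $w\in X$), while
$G(\nabla\phi)=i\omega\int_\Omega\mu H_f\cdot\nabla\overline\phi\,dx
=-i\omega\int_\Omega\nabla\cdot(\mu H_f)\,\overline\phi\,dx$
is nonzero in general. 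Hence $H_f+w$ satisfies the PDE only modulo a gradient (the Lagrange multiplier of your constraint) and is not a solution. (Uniqueness is unaffected: the difference of two genuine solutions does lie in $X$.) The repair is standard but must be included: either make the lift $\mu$-solenoidal first --- solve $\nabla\cdot(\mu\nabla\psi)=\nabla\cdot(\mu H_f)$ with $\psi\in H_0^1(\Omega)$, note $\bt(\nabla\psi)=0$, and replace $H_f$ by $H_f-\nabla\psi$ --- or split the unknown as $w_0+\nabla h$ and determine the gradient part separately. The paper takes the second route: it uses the weighted Helmholtz decomposition $H=H_0+\nabla h$, $\mu^{-1}J_m=J_{m,0}+\nabla j_m$, and extracts the gradient part explicitly by $h=-(i\omega)^{-1}j_m$ before solving for the solenoidal part $H_0$ by the compact-operator/spectral argument. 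Once this correction is inserted, the rest of your argument (coercivity at $\omega_0=it$, analytic Fredholm, Costabel-type regularity giving $(E,H)\in H^1_{\Div}(\Omega)\times H^1_{\Div}(\Omega)$ with the norm estimate) goes through and is essentially the paper's proof.
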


For $\omega>0$ with $\omega\notin \Sigma$, we define the \emph{impedance map} $Z_{\sigma,\mu}^\omega$ as
$$
Z_{\sigma,\mu}^\omega(f) := \mathbf t(E),\quad f\in TH_{\Div}^{1/2}(\p \Omega),
$$
where $(E,H)\in H_{\Div}^1(\Omega)\times H_{\Div}^1(\Omega)$ is the unique solution of the system \eqref{eqn::Maxwell homogenous in appendix} with $\mathbf t(H)=f$, guaranteed by Theorem~\ref{thm::well posedness new version homogeneous}. Moreover, the estimate provided in Theorem~\ref{thm::well posedness new version homogeneous} implies that the impedance map is a well-defined and bounded operator $Z_{\sigma,\mu}^\omega: TH_{\Div}^{1/2}(\p \Omega)\to TH_{\Div}^{1/2}(\p \Omega)$.\smallskip

To prove Theorem~\ref{thm::well posedness new version homogeneous}, we
consider the following non-homogeneous problem. Let $J_e$ and $J_m$ be
vector fields defined in $\Omega$ representing current sources. We
consider the non-homogenous time-harmonic Maxwell equations,
\begin{equation}\label{eqn::Maxwell in appendix}
\nabla\times E=i\omega\mu H + J_m\quad\text{and}\quad \nabla\times H=\sigma E + J_e \quad\text{in}\quad\Omega.
\end{equation}
We have

\begin{Theorem}\label{thm::well posedness new version}
Let $\Omega\subset\R^3$ be a bounded domain with $C^{1,1}$ boundary and let $\sigma,\mu\in C^1(\overline\Omega)$ be such that $\sigma \ge \sigma_0$ and $\mu \ge \mu_0$ for some constants $\sigma_0, \mu_0 > 0$. Suppose that $J_e,J_m\in L^2(\Omega;\C^3)$ such that $\nabla\cdot J_e,\nabla\cdot J_m\in L^2(\Omega;\C^3)$ and $\nu\cdot J_e|_{\p\Omega}, \nu\cdot J_m|_{\p\Omega}\in H^{1/2}(\p\Omega)$. Then there is a discrete subset $\Sigma$ of $\C$ such that for all $\omega\notin \Sigma$ the boundary value problem \eqref{eqn::Maxwell in appendix} with $\mathbf t(H) = 0$ has a unique solution $(E,H)\in H^1_{\Div}(\Omega)\times H^1_{\Div}(\Omega)$ satisfying
$$
\|E\|_{H_{\Div}^1(\Omega)}+\|H\|_{H_{\Div}^1(\Omega)}\le C\big(\|J_e\|_{L^2(\Omega;\C^3)}+\|J_m\|_{L^2(\Omega;\C^3)} + \|\nabla\cdot J_e\|_{L^2(\Omega)}+\|\nabla\cdot J_m\|_{L^2(\Omega)} + \|\nu\cdot J_e\|_{H^{1/2}(\p\Omega)}+\|\nu\cdot J_m\|_{H^{1/2}(\p\Omega)}\big)
$$
for some constant $C>0$ independent of $J_e$ and $J_m$.
\end{Theorem}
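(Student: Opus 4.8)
The plan is to eliminate $E$, reduce the system to a single second-order curl--curl equation for $H$, solve it variationally, and only then recover the full $H^1_{\Div}$ regularity from the div--curl structure. Since $\sigma\ge\sigma_0>0$, the second equation in \eqref{eqn::Maxwell in appendix} gives $E=\sigma^{-1}(\nabla\times H-J_e)$, and substituting into the first equation yields
\[
\nabla\times(\sigma^{-1}\nabla\times H)-i\omega\mu H=J_m+\nabla\times(\sigma^{-1}J_e)\quad\text{in }\Omega,\qquad \bt(H)=0 .
\]
I would seek $H$ in the space $H_0(\curl)$ of $L^2$ fields with $\nabla\times H\in L^2$ and $\bt(H)=0$, using the weak formulation $B_\omega(H,\phi)=\ell(\phi)$ for all $\phi\in H_0(\curl)$, where
\[
B_\omega(H,\phi):=\int_\Omega \sigma^{-1}(\nabla\times H)\cdot\overline{\nabla\times\phi}\,dx-i\omega\int_\Omega \mu\,H\cdot\bar\phi\,dx
\]
and
\[
\ell(\phi):=\int_\Omega J_m\cdot\bar\phi\,dx+\int_\Omega \sigma^{-1}J_e\cdot\overline{\nabla\times\phi}\,dx ,
\]
the boundary terms from integrating the curl--curl term by parts vanishing because $\bt(\phi)=0$. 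Note $\ell$ is bounded on $H_0(\curl)$ by $\|J_e\|_{L^2}+\|J_m\|_{L^2}$.

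For solvability I would first record that the form is well behaved for real $\omega>0$: since $\Re B_\omega(H,H)=\int_\Omega\sigma^{-1}|\nabla\times H|^2$ and $-\Im B_\omega(H,H)=\omega\int_\Omega\mu|H|^2$, multiplying $B_\omega$ by $1+i$ makes its real part bound $\|H\|_{H(\curl)}^2$ from below (using the two-sided bounds on $\sigma,\mu$ and $\omega>0$), so Lax--Milgram gives a unique $H(\curl)$ solution there. To reach all $\omega\notin\Sigma$ I would pass to the Fredholm alternative, the only subtlety being that $\nabla\times$ has an infinite-dimensional kernel, so $H_0(\curl)\hookrightarrow L^2$ is not compact and the zeroth-order term is not directly a compact perturbation. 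The standard remedy is the Helmholtz decomposition $H_0(\curl)=\{\nabla p:p\in H^1_0(\Omega)\}\oplus X$: testing the weak equation against $\phi=\nabla p$, $p\in H^1_0(\Omega)$ (for which $\nabla\times\phi=0$ and $\bt(\phi)=0$), fixes the gradient component through the divergence constraint $\nabla\cdot(\mu H)=-(i\omega)^{-1}\nabla\cdot J_m$, while on the $\mu$-orthogonal complement $X=\{u\in H_0(\curl):\nabla\cdot(\mu u)=0\}$ the embedding $X\hookrightarrow L^2$ is compact (Weber--Picard) since $\mu\in C^1$. Hence $B_\omega|_X$ satisfies a G\aa rding inequality; as $B_\omega$ depends affinely (hence analytically) on $\omega$, the analytic Fredholm theorem produces a discrete exceptional set $\Sigma\subset\C$ off which the problem is uniquely solvable with $\|H\|_{H(\curl)}\lesssim\|J_e\|_{L^2}+\|J_m\|_{L^2}$.

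To upgrade to $H^1$, observe that the divergence constraint already gives $\nabla\cdot(\mu H)\in L^2$, hence $\nabla\cdot H\in L^2$ since $\mu\in C^1$; together with $\nabla\times H\in L^2$ and the homogeneous tangential trace $\bt(H)=0$, the div--curl regularity estimate on a $C^{1,1}$ domain gives $H\in H^1(\Omega;\C^3)$ with $\|H\|_{H^1}\lesssim\|H\|_{L^2}+\|\nabla\times H\|_{L^2}+\|\nabla\cdot H\|_{L^2}$. I would then set $E:=\sigma^{-1}(\nabla\times H-J_e)$, so that $\nabla\times E=i\omega\mu H+J_m\in L^2$ and, from $\nabla\cdot(\sigma E)=-\nabla\cdot J_e$, also $\nabla\cdot E\in L^2$. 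The boundary identity $\nu\cdot(\nabla\times H)=\Div(\bt(H))=0$ combined with the second equation forces the inhomogeneous normal trace $\nu\cdot E=-\sigma^{-1}\,\nu\cdot J_e\in H^{1/2}(\p\Omega)$, and the div--curl estimate with prescribed normal trace then yields $E\in H^1(\Omega;\C^3)$. Finally, since $\Div(\bt(E))=\pm\,\nu\cdot(\nabla\times E)=\pm\,\nu\cdot(i\omega\mu H+J_m)$ and $\mu H\in H^1$, the hypothesis $\nu\cdot J_m\in H^{1/2}(\p\Omega)$ ensures $\Div(\bt(E))\in H^{1/2}$, so $\bt(E)\in TH^{1/2}_{\Div}(\p\Omega)$ and $E,H\in H^1_{\Div}(\Omega)$. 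Tracing constants through these steps reproduces the stated a priori bound, each hypothesis on the sources being consumed exactly once: $\nabla\cdot J_m$ in $\nabla\cdot H$, $\nabla\cdot J_e$ in $\nabla\cdot E$, $\nu\cdot J_e$ in the normal trace of $E$, and $\nu\cdot J_m$ in $\Div(\bt(E))$.

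The main obstacle I anticipate is organizing the Fredholm framework around the non-compact embedding $H_0(\curl)\hookrightarrow L^2$: one must carry out the Helmholtz splitting, pin down the gradient part through the divergence constraint, and invoke the Weber--Picard compactness of $X\hookrightarrow L^2$ on the $C^{1,1}$ domain, which is where the geometry and the constraint structure genuinely interact. The div--curl $H^1$ estimates with homogeneous tangential and inhomogeneous normal traces are classical on $C^{1,1}$ domains, so once the compactness and the G\aa rding inequality are secured, the analytic Fredholm argument and the bookkeeping of the final estimate are routine.
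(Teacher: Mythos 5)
Your proposal is correct and follows essentially the same route as the paper's proof: eliminate $E$ to obtain the curl--curl equation for $H$ in $H(\curl,0;\Omega)$, split off the gradient part via a $\mu$-weighted Helmholtz decomposition and pin it down with the divergence constraint $\nabla\cdot(\mu H)=-(i\omega)^{-1}\nabla\cdot J_m$, use Weber's compact embedding of the $\mu$-divergence-free subspace to get a Fredholm-type argument yielding the discrete set $\Sigma$, and finally recover $H^1_{\Div}$ regularity for $H$ and for $E:=\sigma^{-1}(\nabla\times H-J_e)$ through exactly the same div--curl estimates and trace identities ($\nu\cdot E=-\sigma^{-1}\nu\cdot J_e$, $\Div(\bt(E))=-\nu\cdot(\nabla\times E)$). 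The only cosmetic difference is that the paper anchors invertibility with a real shift $+\lambda\mu$ and characterizes $\Sigma$ via the spectrum of the resulting compact operator $\widetilde T_\lambda$, whereas you anchor it at real $\omega>0$ and invoke the analytic Fredholm theorem directly --- the same mechanism in different clothing.
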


\noindent
We first prove Theorem~\ref{thm::well posedness new version} and then
show that it can be used to prove Theorem~\ref{thm::well posedness new
  version homogeneous}.

\subsection{Proof of Theorem~\ref{thm::well posedness new version}}

We introduce some notion that will be used for the proof. We work with the following Hilbert space which is the largest domain of $\nabla\times$:
$$
H(\curl;\Omega):=\{w\in L^2(\Omega;\C^3):\nabla\times w\in L^2(\Omega;\C^3)\}
$$
endowed with the norm $\|w\|_{H(\curl;\Omega)}:=\|w\|_{L^2(\Omega;\C^3)} + \|\nabla\times w\|_{L^2(\Omega;\C^3)}$. Then the tangential trace operator has its extensions to bounded operators $\mathbf{t}:H(\curl; \Omega)\to H^{-1/2}(\p \Omega;\C^3)$. We also work with the space of vector fields in $H(\curl;\Omega)$ having zero tangential trace
$$
H(\curl, 0; \Omega):=\{w\in H(\curl;\Omega):\mathbf{t}(w)=0\}.
$$
For the short proof, we follow the standard variational methods used in \cite{kirsch2016mathematical, monk2003finite}. Substituting the first equation of \eqref{eqn::Maxwell in appendix} into the second one, we obtain
\begin{equation}\label{eqn::second order equation}
\nabla\times(\sigma^{-1}\nabla\times H) - i\omega\mu H = J_m +\nabla\times(\sigma^{-1} J_e)\quad\text{in}\quad\Omega.
\end{equation}
Our first step is to find a unique solution $H\in H(\curl, 0;\Omega)$ of this equation satisfying
\begin{equation}\label{eqn::estimate for H}
\|H\|_{H(\curl;\Omega)}\le C(\|J_e\|_{L^2(\Omega;\C^3)}+\|J_m\|_{L^2(\Omega;\C^3)}).
\end{equation}
By Helmholtz type decompositions in \cite[Section~4.1.3]{kirsch2016mathematical} or \cite[Section~3.7]{monk2003finite}, we can uniquely decompose
\begin{align*}
H &= H_0+\nabla h,\quad H_0\in H(\curl, 0;\Omega)_{0,\mu} := \{w\in H(\curl, 0; \Omega): \nabla\cdot(\mu w) = 0\},\quad h\in H^1_0(\Omega;\C),\\
\mu^{-1}J_m &= J_{m,0}+\nabla j_m,\quad J_{m,0}\in L^2(\Omega;\C^3)_{0,\mu} := \{w\in L^2(\Omega;\C^3): \nabla\cdot(\mu w) = 0\},\quad j_m\in H^1_0(\Omega;\C).
\end{align*}
We note here that
\begin{equation}\label{eqn::estimate for j_e in H^1 norm}
\|j_m\|_{H^1(\Omega;\C)}\le C\|J_m\|_{L^2(\Omega;\C^3)}.
\end{equation}
Using these decompositions, \eqref{eqn::second order equation} can be rewritten as
\begin{equation}\label{eqn::second order equation in a weak form -- rewritten}
\nabla\times(\sigma^{-1}\nabla\times H_0) - i\omega\mu H_0 - i\omega\mu \nabla h = \mu J_{m,0} + \mu\nabla j_m +\nabla\times(\sigma^{-1} J_e)\quad\text{in}\quad\Omega.
\end{equation}
To extract $h$ from \eqref{eqn::second order equation in a weak form -- rewritten}, we simply set $h=-(i\omega)^{-1}j_m$. Thus, we need to find a unique $H_0\in H(\curl, 0;\Omega)$ with $\nabla\cdot(\mu H_0) = 0$ satisfying
\begin{equation}\label{eqn::second order equation -- rewritten}
\nabla\times(\sigma^{-1}\nabla\times H_0) - i\omega\mu H_0 = \mu J_{m,0} + \nabla\times(\sigma^{-1} J_e)\quad\text{in}\quad\Omega.
\end{equation}
To solve this equation, we need the following result on existence of a solution operator
\begin{Proposition}\label{prop::resonances}
There exist a constant $\lambda>0$ and a bounded linear map $T_\lambda : H(\curl, 0;\Omega)'\to H(\curl, 0;\Omega)$ such that
\begin{equation}\label{eqn::T_lambda is an inverse of a certain PDoperator}
\nabla \times (\sigma^{-1} \nabla \times T_\lambda u) + \lambda \mu T_\lambda u = u,\quad u\in H(\curl, 0;\Omega)'
\end{equation}
and
$$
T_\lambda(\nabla\times(\sigma^{-1}\nabla\times e)+\lambda\mu e)=e,\quad e\in H(\curl, 0;\Omega).
$$
Furthermore, if $\nabla\cdot u = 0$, then $T_\lambda u\in H(\curl, 0;\Omega)_{0,\mu}$. 
\end{Proposition}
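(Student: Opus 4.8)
The plan is to realize $T_\lambda$ as the inverse of the operator associated with a coercive sesquilinear form, so that the whole statement reduces to the Lax--Milgram theorem on the Hilbert space $V := H(\curl, 0; \Omega)$. On $V$ I would introduce, for a constant $\lambda > 0$ to be fixed, the sesquilinear form
$$
   a_\lambda(e,\phi) := \int_\Omega \sigma^{-1}(\nabla\times e)\cdot\overline{\nabla\times\phi}\,dx
   + \lambda\int_\Omega \mu\, e\cdot\overline\phi\,dx,\qquad e,\phi\in V .
$$
Since $\sigma,\mu\in C^1(\overline\Omega)$ with $\sigma\ge\sigma_0$, $\mu\ge\mu_0$ and $\overline\Omega$ compact, both $\sigma^{-1}$ and $\mu$ are bounded above and below by positive constants. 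Hence $a_\lambda$ is bounded on $V\times V$ by Cauchy--Schwarz, and
$$
   \Re a_\lambda(e,e)
   \ge (\sup_{\overline\Omega}\sigma)^{-1}\|\nabla\times e\|_{L^2(\Omega)}^2
   + \lambda\mu_0\|e\|_{L^2(\Omega)}^2
   \ge c\,\|e\|_{H(\curl;\Omega)}^2
$$
with $c = \min\{(\sup_{\overline\Omega}\sigma)^{-1},\lambda\mu_0\}>0$, so $a_\lambda$ is coercive for \emph{every} $\lambda>0$; I would simply fix any such $\lambda$.

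Next I would invoke Lax--Milgram: the bounded coercive form $a_\lambda$ defines a bounded isomorphism $A_\lambda:V\to V'$ via $\langle A_\lambda e,\phi\rangle = a_\lambda(e,\phi)$, with $\|A_\lambda^{-1}\|\le c^{-1}$. I then set $T_\lambda := A_\lambda^{-1}$, which is the required bounded linear map $V'\to V$: for $u\in V'$ the element $e=T_\lambda u\in V$ is the unique solution of $a_\lambda(e,\phi)=\langle u,\phi\rangle$ for all $\phi\in V$, and the relations $T_\lambda A_\lambda=\id_V$, $A_\lambda T_\lambda=\id_{V'}$ yield the two displayed identities. To see that $A_\lambda e$ is exactly the distribution $\nabla\times(\sigma^{-1}\nabla\times e)+\lambda\mu e$, I would integrate by parts in the curl term of $a_\lambda$; the boundary contribution is $\int_{\p\Omega}(\sigma^{-1}\nabla\times e)\cdot(\nu\times\overline\phi)\,dS$, which vanishes because $\phi\in V$ has $\bt(\phi)=\nu\times\phi|_{\p\Omega}=0$. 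This is the only place the zero tangential trace enters, and it is precisely what makes the weak formulation coincide with the stated PDE.

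Finally, for the divergence-free claim I would test against gradients. For $p\in H^1_0(\Omega;\C)$ one has $\nabla\times\nabla p=0$ and $\bt(\nabla p)=0$, so $\nabla p\in V$ is admissible. Substituting $\phi=\nabla p$ into $a_\lambda(T_\lambda u,\nabla p)=\langle u,\nabla p\rangle$ annihilates the curl-curl term and leaves $\lambda\int_\Omega\mu\,(T_\lambda u)\cdot\overline{\nabla p}\,dx=\langle u,\nabla p\rangle$. If $\nabla\cdot u=0$, interpreted as $\langle u,\nabla p\rangle=0$ for all such $p$, then $\int_\Omega\mu\,(T_\lambda u)\cdot\overline{\nabla p}\,dx=0$ for all $p\in C_c^\infty(\Omega)$, which is exactly $\nabla\cdot(\mu\, T_\lambda u)=0$ in $\Omega$; together with $T_\lambda u\in V$ this gives $T_\lambda u\in H(\curl,0;\Omega)_{0,\mu}$.

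The computations above are routine; the one point requiring care is the duality bookkeeping --- making precise in what sense $A_\lambda e$ equals $\nabla\times(\sigma^{-1}\nabla\times e)+\lambda\mu e$ as an element of $V'$, and the meaning of the hypothesis $\nabla\cdot u=0$ for $u\in V'$, namely its annihilation of $\{\nabla p:p\in H^1_0(\Omega)\}$. Once these pairings are set up consistently, the inverse identities and the divergence-free conclusion follow directly, so I do not anticipate a genuine obstacle beyond this bookkeeping.
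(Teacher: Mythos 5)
Your proposal is correct and takes essentially the same approach as the paper: the paper's proof is a one-line appeal to Lax--Milgram's lemma (deferring details to Proposition~5.1 of the cited reference \cite{assylbekov2016note}), and your coercive sesquilinear form $a_\lambda$ on $H(\curl,0;\Omega)$, together with testing against $\nabla p$ for $p\in H^1_0(\Omega)$ to get $\nabla\cdot(\mu T_\lambda u)=0$, is exactly that argument written out in full, with the duality conventions handled consistently with the paper's pairing $\langle\cdot,\cdot\rangle_\Omega$.
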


Here and in what follows, $\langle\cdot,\cdot\rangle_\Omega$ is the duality between $H(\curl, 0;\Omega)'$ and $H(\curl, 0;\Omega)$ naturally extending the inner product of $L^2(\Omega;\C^3)$.

\begin{proof}
The proof is similar to that of \cite[Proposition~5.1]{assylbekov2016note} using Lax-Milgram's lemma.
\end{proof}

Then $H_0\in H(\curl, 0;\Omega)$ with $\nabla\cdot(\mu H_0) = 0$ solves \eqref{eqn::second order equation -- rewritten} if and only if
\begin{equation}\label{eqn::second order equation -- rewritten in terms of T_lambda operators}
H_0-(i\omega+\lambda)\widetilde T_{\lambda}H_0=T_\lambda\left(\mu J_{m,0}+\nabla\times(\sigma^{-1}J_e)\right)
\end{equation}
where $\widetilde T_{\lambda}=T_\lambda \circ m_\mu\circ P_\mu$, $m_\mu$ is multiplication by $\mu$, and $P_\mu$ is the bounded orthogonal projection of $L^2(\Omega;\C^3)$ onto $L^2(\Omega;\C^3)_{0,\mu}$ constructed in \cite[Section~4.1.3]{kirsch2016mathematical}. Since $J_{m,0}\in L^2(\Omega;\C^3)_{0,\mu}$ and $\nabla\cdot\nabla\times = 0$, we then have $\nabla\cdot\left(\mu J_{m,0}+\nabla\times(\sigma^{-1}J_e)\right) = 0$. Therefore, by the second part of Proposition~\ref{prop::resonances}, this implies that $T_\lambda\left(\mu J_{m,0}+\nabla\times(\sigma^{-1}J_e)\right)$ belongs to $H(\curl, 0;\Omega)_{0,\mu}$. The second part of Proposition~\ref{prop::resonances} implies also that $\widetilde T_{\lambda}$ can be considered as a bounded linear operator
$$
\widetilde T_{\lambda} : L^2(\Omega;\C^3)_{0,\mu}\overset{m_\mu}{\longrightarrow} L^2(\Omega;\C^3)_{0,1}\overset{T_\lambda}{\longrightarrow} H(\curl, 0;\Omega)_{0,\mu}\hookrightarrow L^2(\Omega;\C^3)\overset{P_\mu}{\longrightarrow} L^2(\Omega;\C^3)_{0,\mu}.
$$
Using the compactness of the inclusion $H(\curl, 0;\Omega)_{0,\mu}\hookrightarrow L^2(\Omega;\C^3)$ \cite{weber1980local} and following similar reasoning as at the end of \cite[Section~5]{assylbekov2016note}, one can show that  for any $\omega\notin \Sigma$, where $\Sigma:=\{\omega\in \C\setminus\{\pm i\lambda\}:(i\omega+\lambda)^{-1}\in \Spec(\widetilde T_{\lambda})\}$ which is discrete, \eqref{eqn::second order equation -- rewritten in terms of T_lambda operators} has a unique solution $H_0\in H(\curl, 0;\Omega)_{0,\mu}$ satisfying
$$
\|H_0\|_{H(\curl;\Omega)}\le C(\|J_e\|_{L^2(\Omega;\C^3)}+\|J_{m,0}\|_{L^2(\Omega;\C^3)}).
$$
Next, setting $H = H_0-(i\omega)^{-1}j_m$, we obtain a unique $H(\curl, 0;\Omega)$ solution for \eqref{eqn::second order equation} satisfying \eqref{eqn::estimate for H} thanks to \eqref{eqn::estimate for j_e in H^1 norm}. Defining $E := \sigma^{-1}(\nabla\times H - J_e)$ we obtain a unique $(E,H)\in H(\curl, 0;\Omega)\times H(\curl;\Omega)$ solving \eqref{eqn::Maxwell in appendix} and satisfying
$$
\|E\|_{H(\curl;\Omega)} + \|H\|_{H(\curl;\Omega)}\le C(\|J_e\|_{L^2(\Omega;\C^3)}+\|J_m\|_{L^2(\Omega;\C^3)}).
$$
To prove that $(E,H)\in H^1_{\Div}(\Omega)\times H^1_{\Div}(\Omega)$, apply $\nabla\cdot$ to \eqref{eqn::Maxwell in appendix} and get $\nabla\cdot(i\omega\mu H) =-\nabla\cdot J_m$ and $\nabla\cdot(\sigma E) = -\nabla\cdot J_e$. Hence $\nabla\cdot E, \nabla\cdot H\in L^2(\Omega)$, since $\nabla\cdot J_e, \nabla\cdot J_m\in L^2(\Omega)$ by assumption. Then $\bt(H)=0$ and the results in \cite{costabel1990remark} imply that $H\in H^1_{\Div}(\Omega)$ and
$$
\|H\|_{H^1_{\Div}(\Omega)} \le C\big(\|H\|_{H(\curl;\Omega)} + \|J_e\|_{L^2(\Omega;\C^3)} + \|J_m\|_{L^2(\Omega;\C^3)} + \|\nabla\cdot J_e\|_{L^2(\Omega)} + \|\nabla\cdot J_m\|_{L^2(\Omega)}\big).
$$
To show that $E\in H^1(\Omega;\C^3)$, observe that $\nu\cdot (\nabla\times H)|_{\p\Omega} = - \Div(\bt(H))=0$ by \cite[Corollary~A.20]{kirsch2016mathematical}. Then by \eqref{eqn::Maxwell in appendix}, $\nu\cdot E|_{\p\Omega} = \sigma^{-1} \nu\cdot (\nabla\times H)|_{\p\Omega} - \sigma^{-1} \nu\cdot J_e|_{\p\Omega} = - \sigma^{-1} \nu\cdot J_e|_{\p\Omega} \in H^{1/2}(\p\Omega)$. According to the results in \cite{costabel1990remark}, this implies that $E\in H^1(\Omega;\C^3)$ and
$$
\|E\|_{H^1_{\Div}(\Omega)} \le C\big(\|E\|_{H(\curl;\Omega)} + \|J_e\|_{L^2(\Omega;\C^3)} + \|\nabla\cdot J_e\|_{L^2(\Omega)} + \|\nu\cdot J_e\|_{H^{1/2}(\p\Omega)}\big).
$$
Next, using \cite[Corollary~A.20]{kirsch2016mathematical} and \eqref{eqn::Maxwell in appendix}, we can show $\Div(\bt(E)) = - \nu\cdot (\nabla\times E)|_{\p\Omega} = - i\omega\mu \nu\cdot H|_{\p\Omega} + \nu\cdot J_m|_{\p\Omega} \in H^{1/2}(\p\Omega)$. Thus, $E\in H^1_{\Div}(\Omega)$. Finally, the estimate in the statement of the theorem follows by combining all the above estimates. The proof of Theorem~\ref{thm::well posedness new version} is thus complete.

\subsection{Proof of Theorem~\ref{thm::well posedness new version homogeneous}}

First prove the uniqueness of the solution. For a fixed $\omega\in
\C$, suppose that $(E_j,H_j)\in H^1_{\Div}(\Omega)\times
H^1_{\Div}(\Omega)$, $j=1,2$, solve \eqref{eqn::Maxwell homogenous in
  appendix} and satisfy $\mathbf{t}(H_1)=\mathbf{t}(H_2)$. Then
$(E,H)\in H^1_{\Div}(\Omega)\times H^1_{\Div}(\Omega)$ also solve
\eqref{eqn::Maxwell homogenous in appendix} and satisfy
$\mathbf{t}(H)=0$, where $E:=E_1-E_2$ and $H:=H_1-H_2$. The uniqueness
part of Theorem~\ref{thm::well posedness new version} (with
$J_e=J_m=0$) gives that $E=0$ and $H=0$.\smallskip

Next, we prove existence of a solution. For a given $f\in TH^{1/2}_{\Div}(\p\Omega)$, there is $H'\in H^1_{\Div}(\Omega)$ such that $\mathbf{t}(H')=f$ and $\|H'\|_{H^1_{\Div}(\Omega)}\le C\|f\|_{TH^{1/2}_{\Div}(\p\Omega)}$. Applying Theorem~\ref{thm::well posedness new version} with $J_e=-\nabla\times H'$ and $J_m=i\omega\mu H'$, we obtain a unique $(E_0,H_0)\in H^1_{\Div}(\Omega)\times H^1_{\Div}(\Omega)$ solving
$$
\nabla\times E_0=i\omega\mu H_0+i\omega\mu H',\quad \nabla\times H_0=\sigma E_0 - \nabla\times H',\quad \bt(H_0)=0
$$
and satisfying $\|E_0\|_{H_{\Div}^1(\Omega)}+\|H_0\|_{H_{\Div}^1(\Omega)}\le C \|f\|_{TH^{1/2}_{\Div}(\p\Omega)}$. Here, we used the fact that $\nu\cdot (\nabla\times H')|_{\p\Omega} = - \Div(\bt(H'))\in H^{1/2}(\p\Omega)$ by \cite[Corollary~A.20]{kirsch2016mathematical}. Then $(E,H)\in H^1_{\Div}(\Omega)\times H^1_{\Div}(\Omega)$ solves \eqref{eqn::Maxwell homogenous in appendix} with $\mathbf{t}(E)=f$, where $E:=E_0+E'$ and $H:=H_0$. The proof is complete.

\section{Construction of complex geometric optics solutions}\label{section::CGOs}

Throughout this section, we assume that $\sigma$ and $\mu$ can be extended to the whole $\R^3$ so that $\sigma\ge \sigma_0$, $\mu\ge \mu_0$ and
\begin{equation}\label{sigma and mu are constants outside of a compact set}
\sigma - \sigma_0,\quad \mu - \mu_0 \in C^2_0(\R^3)
\end{equation}
for some constants $\sigma_0, \mu_0 > 0$. We also let $R>0$ be large enough (but fixed) so that $B_R(0)$ contains both $\supp(\sigma - \sigma_0)$ and $\supp(\mu - \mu_0)$.\smallskip

Substituting the first equation of \eqref{eqn::Maxwell} into the second one, we obtain the following second-order equation
\begin{equation}\label{eqn::curl-curl equation}
\nabla\times(\sigma^{-1}\nabla\times H) - i\omega \mu H = 0\quad\text{in}\quad \Omega.
\end{equation}
The aim of the present section is to construct a complex geometric optics solution in $H\in H_{\Div}^1(\Omega)$ for the above equation. Instead of working in $\Omega$, we conduct our analysis in the whole $\R^3$. Therefore, we consider
\begin{equation}\label{eqn::curl-curl equation R^3}
\nabla\times(\sigma^{-1}\nabla\times H) - i\omega \mu H = 0\quad\text{in}\quad \R^3.
\end{equation}

Taking the divergence of \eqref{eqn::curl-curl equation R^3}, it straightforwardedly follows that $\nabla\cdot(\mu H) = 0$ in $\R^3$. Therefore, we obtain
$$
\nabla\times\nabla\times H =  - \Delta H - \nabla(\nabla\beta \cdot H)\quad\text{in}\quad \R^3,
$$
where $\beta := \log\mu$. Then, we use the latter identity in \eqref{eqn::curl-curl equation R^3} to show that this equation is equivalent to the system
\begin{align}
L_{\sigma, \mu} H := -\Delta H - \nabla(\nabla\beta \cdot H) - \nabla\alpha \times \nabla\times H - i\omega \sigma \mu H = 0\quad\text{in}\quad \R^3, \label{eqn3-1}\\
\nabla\cdot(\mu H) = 0\quad\text{in}\quad \R^3, \label{eqn3-2}
\end{align}
where $\alpha := \log\sigma$. We note that the derivatives $\p^\kappa \alpha$ and $\p^\kappa \beta$ are uniformly continuous on $\R^3$ for $|\kappa|=0, 1, 2$.\smallskip

The complex geometric optics solutions we aim to construct are of the form
$$
H(x; \zeta) = e^{i \zeta\cdot x}(a(x; \zeta) + r(x; \zeta)),
$$
where $\zeta\in \C^3\setminus\{0\}$ such that $\zeta\cdot\zeta = i\omega \sigma_0 \mu_0$, $a$ is a specific complex-valued smooth vector field on $\R^3$ and $r$ is the correction term. Then \eqref{eqn3-1} is equivalent to
\begin{equation} \label{eq:Lsm}
e^{-i \zeta\cdot x}L_{\sigma,\mu}(e^{i \zeta\cdot x}r) = - f,\quad f := e^{-i \zeta\cdot x} L_{\sigma,\mu}(e^{i \zeta\cdot x}a).
\end{equation}

\subsection{Solution operator}

For $\zeta\in \C^3\setminus\{0\}$ such that $\zeta\cdot\zeta = i\omega \sigma_0 \mu_0$, we define the operators
$$
\nabla_\zeta := \nabla + i\zeta,\quad \Delta_\zeta := \Delta + 2i \zeta\cdot \nabla.
$$
Then
\begin{equation}\label{eqn:: conj grad and laplacian}
e^{-i \zeta\cdot x}\circ \nabla\circ e^{i \zeta\cdot x} = \nabla_\zeta\quad\text{and}\quad e^{-i \zeta\cdot x}\circ \Delta\circ e^{i \zeta\cdot x} = \Delta_\zeta - i\omega \sigma_0 \mu_0.
\end{equation}
For $\delta\in\R$, we define the $L^2$-based weighted space on $\R^3$
$$
L^2_\delta := \left\{f: \R^3 \to \C^3: \|f\|_{L^2_\delta}:=\Big(\int_{\R^3} (1+|x|^2)^{\delta}|f(x)|^2\,dx\Big)^{1/2} < \infty\right\}
$$
and
$$
H_\delta^1 := \left\{f \in L^2_\delta: \|f\|_{H^1_\delta}:=\|f\|_{L^2_\delta} + \sum_{j=1}^3\|\p_j f\|_{L^2_\delta} < \infty\right\}.
$$

\begin{Proposition}\label{prop unique solvability}
For $k\in \C$, suppose $\zeta\in \C^3$ with $\zeta \cdot \zeta = k$, $-1 < \delta < 0$. Assume that $\gamma\in C^2(\R^3)$ is positive. Then for $f\in L^2_{\delta+1}$ there is a unique  $u\in L^2_\delta$ solving
\begin{equation}\label{eqn elliptic}
(-\Delta_\zeta - \nabla\log\gamma \cdot \nabla_\zeta) u = f\quad\text{in}\quad\R^3
\end{equation}
such that
$$
\|u\|_{L^2_\delta} \le \frac{C}{|\zeta|}\|f\|_{L^2_{\delta+1}}
$$
for some constant $C>0$. Furthermore, $u$ belongs to $H^1_\delta$.
\end{Proposition}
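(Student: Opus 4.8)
The plan is to first strip off the first-order term by a gauge (Liouville) transformation, and then treat the resulting Schr\"odinger-type equation by a Neumann series built on the Sylvester--Uhlmann resolvent estimate for $-\Delta_\zeta$. The key point, and the main obstacle, is that the perturbation $\nabla\log\gamma\cdot\nabla_\zeta$ is \emph{critical} for the natural scaling: the gain $|\zeta|^{-1}$ furnished by $(-\Delta_\zeta)^{-1}$ is exactly cancelled by the factor $|\zeta|$ produced by $\nabla_\zeta$, so a direct perturbation of $-\Delta_\zeta$ will not converge. I therefore begin from the algebraic identity $\gamma^{1/2}(-\Delta_\zeta-\nabla\log\gamma\cdot\nabla_\zeta)\gamma^{-1/2}=-\Delta_\zeta+q$, where $q:=\gamma^{-1/2}\Delta\gamma^{1/2}=\tfrac14|\nabla\log\gamma|^2+\tfrac12\Delta\log\gamma$. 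Since $\gamma$ is constant outside a compact set in this section, $q\in C_0(\R^3)$ is bounded with compact support. Writing $u=\gamma^{-1/2}w$, equation~\eqref{eqn elliptic} becomes equivalent to the zeroth-order--perturbed equation $(-\Delta_\zeta+q)w=\gamma^{1/2}f$, in which the dangerous first-order term has disappeared.

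The analytic input is the Sylvester--Uhlmann estimate for the conjugated Laplacian. For $-1<\delta<0$ and $|\zeta|$ large with $\zeta\cdot\zeta=k$, I will use that $-\Delta_\zeta$ admits a solution operator $G_\zeta:L^2_{\delta+1}\to H^1_\delta$ with $-\Delta_\zeta G_\zeta=\id$ and
\[
\|G_\zeta g\|_{L^2_\delta}\le \frac{C}{|\zeta|}\,\|g\|_{L^2_{\delta+1}},\qquad \|\nabla G_\zeta g\|_{L^2_\delta}\le C\,\|g\|_{L^2_{\delta+1}},
\]
with $C$ independent of $\zeta$. This is proved in~\cite{sylvester1987global} for $\zeta\cdot\zeta=0$ by analyzing the Fourier multiplier $(|\xi|^2+2\zeta\cdot\xi)^{-1}$ near its codimension-two zero set, and the extension to a fixed $k=\zeta\cdot\zeta$ is routine since the characteristic variety $\{|\xi|^2+2\zeta\cdot\xi=0\}$ remains codimension two and the multiplier analysis is unchanged for large $|\zeta|$. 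The same analysis supplies the uniqueness statement I need below: the only solution of $-\Delta_\zeta v=0$ in $L^2_\delta$ is $v=0$, because then $\widehat v$ is supported on the measure-zero characteristic set while $v\in L^2_\delta$ with $-1<\delta<0$ forces $\widehat v$ to carry no mass there.

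With $G_\zeta$ in hand I recast $(-\Delta_\zeta+q)w=\gamma^{1/2}f$ as the fixed-point equation $(\id+G_\zeta m_q)w=G_\zeta(\gamma^{1/2}f)$, where $m_q$ denotes multiplication by $q$. Because $q$ is bounded with compact support, $m_q:L^2_\delta\to L^2_{\delta+1}$ is bounded with norm independent of $\zeta$, whence $\|G_\zeta m_q\|_{L^2_\delta\to L^2_\delta}\le C/|\zeta|$. Taking $|\zeta|$ large enough that this is less than $\tfrac12$, the operator $\id+G_\zeta m_q$ is invertible on $L^2_\delta$ by a Neumann series, producing a unique $w\in L^2_\delta$ with $\|w\|_{L^2_\delta}\le 2\|G_\zeta(\gamma^{1/2}f)\|_{L^2_\delta}\le C|\zeta|^{-1}\|f\|_{L^2_{\delta+1}}$. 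For the regularity claim, note $-\Delta_\zeta w=\gamma^{1/2}f-qw\in L^2_{\delta+1}$, so $w=G_\zeta(\gamma^{1/2}f-qw)\in H^1_\delta$ by the mapping property above. Finally $u:=\gamma^{-1/2}w$ solves~\eqref{eqn elliptic}, and since $\gamma^{-1/2}\in C^2$ is bounded together with its first derivatives, multiplication by it preserves both $L^2_\delta$ and $H^1_\delta$; this gives $u\in H^1_\delta$ and the asserted bound $\|u\|_{L^2_\delta}\le C|\zeta|^{-1}\|f\|_{L^2_{\delta+1}}$.

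Uniqueness follows by running the same argument backwards. If $u\in L^2_\delta$ solves the homogeneous equation, then $w=\gamma^{1/2}u\in L^2_\delta$ satisfies $(-\Delta_\zeta+q)w=0$, hence $-\Delta_\zeta w=-qw\in L^2_{\delta+1}$; by the uniqueness of the solution operator $G_\zeta$ on $L^2_\delta$, we get $w=-G_\zeta m_q w$, that is $(\id+G_\zeta m_q)w=0$. As $\id+G_\zeta m_q$ is invertible for large $|\zeta|$, this forces $w=0$ and thus $u=0$. The whole argument is valid once $|\zeta|$ exceeds a threshold fixed by $\|q\|_{L^\infty}$ and the support of $q$, which is precisely the regime in which the complex geometric optics solutions of the next subsection are constructed.
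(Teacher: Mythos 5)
Your proposal is correct and takes essentially the same approach as the paper: the identical gauge (Liouville) identity $(-\Delta_\zeta - \nabla\log\gamma\cdot\nabla_\zeta)u = \gamma^{-1/2}(-\Delta_\zeta + q)(\gamma^{1/2}u)$, $q=\gamma^{-1/2}\Delta\gamma^{1/2}$ compactly supported, reducing \eqref{eqn elliptic} to a Schr\"odinger-type equation with no first-order term. The only difference is that the paper simply cites \cite[Theorem~1.6]{sylvester1987global} for unique solvability with the $C/|\zeta|$ bound and \cite[Lemma~1.15]{sun1992inverse} for the $H^1_\delta$ regularity, whereas you unpack these citations by running a Neumann series on the free resolvent $G_\zeta$ of $-\Delta_\zeta$ --- which is precisely how the cited results are themselves proved.
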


\begin{proof}
It follows from the identity
$$
(-\Delta_\zeta - \nabla\log\gamma \cdot \nabla_\zeta) u = \gamma^{-1/2}(-\Delta_\zeta + q) (\gamma^{1/2}u),\quad q:=\gamma^{-1/2}\Delta\gamma^{1/2}\in C_0(\R^3),
$$
that solving \eqref{eqn elliptic} is equivalent to solving
$$
(-\Delta_\zeta + q) \tilde u = \gamma^{1/2} f\quad\text{in}\quad\R^3,
$$
where $\tilde u := \gamma^{1/2} u$. By \cite[Theorem~1.6]{sylvester1987global} there is a unique $\tilde u\in L^2_\delta$  solving the above equation and satisfying
$$
\|\tilde u\|_{L^2_\delta} \le \frac{C}{|\zeta|}\|\gamma^{1/2}f\|_{L^2_{\delta+1}}.
$$
Next, \cite[Lemma~1.15]{sun1992inverse} implies that $\tilde u \in H^1_\delta$. 
The result now follows immediately by setting $u = \gamma^{-1/2} \tilde u$.
\end{proof}

According to Proposition~\ref{prop unique solvability}, for sufficiently large $|\zeta|$, there is a bounded inverse $G_{\zeta, \gamma}: L^2_{\delta + 1} \to L^2_\delta$ of $-\Delta_\zeta - \nabla\log\gamma \cdot \nabla_\zeta$ such that
$$
\|G_{\zeta,\gamma}\|_{L^2_{\delta + 1} ; L^2_\delta} = \mathcal O\Big(\frac{1}{|\zeta|}\Big)\quad\text{as}\quad |\zeta|\to\infty.
$$
Moreover, $G_{\zeta, \gamma}$ maps $L^2_{\delta + 1}$ into $H^1_\delta$.

\subsection{Mollified $\sigma$ and $\mu$}
Let $\Phi\in C^\infty_0(\R^3)$ with $0 \le \Phi \le 1$ and $\int_{\R^3} \Phi(x)\,dx=1$. For a fixed $\epsilon$ with $0 < \epsilon < 1/8$, we consider
$$
\Phi_\tau(x) := \Big(\frac{1}{\tau^{-\epsilon}}\Big)^3 \Phi\Big(\frac{x}{\tau^{-\epsilon}}\Big)\quad\text{for large } \tau>0.
$$
We define 
$$
\alpha^\sharp(x;\tau) := \alpha * \Phi_\tau (x),\quad \beta^\sharp(x;\tau) := \beta * \Phi_\tau(x)\quad\text{for}\quad x\in \R^3.
$$
Then $\alpha^\sharp(\cdot;\tau), \beta^\sharp(\cdot;\tau) \in C^\infty(\R^3)$. From
$\p^\kappa \alpha^\sharp = (\p^\kappa \alpha) * \Phi_\tau$ and $\p^\kappa \beta^\sharp = (\p^\kappa \beta) * \Phi_\tau$,
it follows that
\begin{equation}\label{supports of derivatives of alpha and beta}
\supp(\p^\kappa \alpha^\sharp) ,\ \supp(\p^\kappa \beta^\sharp) \subset B_{R+2\tau^{-\epsilon}}(0),\quad |\kappa| = 1, 2.
\end{equation}
We also have
\begin{equation}\label{L^infty norms of derivatives of alpha and beta errors}
\|\alpha -  \alpha^\sharp\|_{W^{2,\infty}(\R^3)} = o(1), \quad \| \beta - \beta^\sharp\|_{W^{2,\infty}(\R^3)} = o(1)\quad\text{as}\quad \tau\to\infty.
\end{equation}
Indeed,
$$
\p^\kappa \alpha(x) - \p^\kappa \alpha^\sharp(x;\tau) = \int_{\R^3} \Phi(y) [\p^\kappa\alpha(x) - \p^\kappa\alpha(x - \tau^{-\epsilon} y)]\,dy,\quad |\kappa|=0, 1, 2,
$$
and uniform continuity of $\p^\kappa\alpha$ on $\R^3$ gives the desired estimate using \cite[Theorem~0.13]{folland1995introduction}. A similar argument can be used for $\beta$.\smallskip

Finally, using that $\p^\kappa \alpha^\sharp = \alpha * (\p^\kappa\Phi_\tau)$ and $\p^\kappa \beta^\sharp = \beta * (\p^\kappa\Phi_\tau)$, a direct calculation shows that
$$
\|\p^\kappa\alpha^\sharp\|_{L^{\infty}(\R^3)},\ \|\p^\kappa\beta^\sharp\|_{L^{\infty}(\R^3)} = \mathcal O_\kappa(\tau^{|\kappa|\epsilon})\quad\text{for}\quad |\kappa|\ge 0\quad\text{as}\quad \tau\to \infty,
$$
which implies that
\begin{equation}\label{eqn::L^infty norms of derivatives of alpha & beta}
\|\alpha^\sharp\|_{W^{k,\infty}(\R^3)},\ \|\beta^\sharp\|_{W^{k,\infty}(\R^3)} = \mathcal O_k(\tau^{k\epsilon})\quad\text{for}\quad k = 0,1,2,\ldots\quad\text{as}\quad \tau\to \infty.
\end{equation}
We note that stronger estimates follow from \eqref{L^infty norms of derivatives of alpha and beta errors}
\begin{equation}\label{eqn::W^2,infty norms of alpha & beta sharps}
\|\alpha^\sharp\|_{W^{2,\infty}(\R^3)},\ \|\beta^\sharp\|_{W^{2,\infty}(\R^3)} = \mathcal O(1)\quad\text{as}\quad \tau\to \infty.
\end{equation}

\subsection{Transport equation}

Since $\zeta \cdot \zeta = i\omega \sigma_0 \mu_0$, using \eqref{eqn:: conj grad and laplacian}, we obtain (cf.~(\ref{eq:Lsm}))
\begin{align*}
f &= -\Delta_\zeta a - \nabla_\zeta(\nabla\beta\cdot a) - \nabla\alpha\times \nabla_\zeta\times a  - i\omega\sigma\mu a + i\omega \sigma_0 \mu_0 a \\
& = -\Delta a - \nabla(\nabla\beta\cdot a) - \nabla\alpha\times \nabla\times a  - i\omega (\sigma\mu - \sigma_0 \mu_0) a
- 2i \zeta\cdot \nabla a - (\nabla\beta\cdot a) i\zeta - \nabla\alpha \times (i\zeta \times a).
\end{align*}
We shall consider $\zeta$ of the form $\zeta = \tau \rho + \zeta_1$ where $\tau > 0$ is a large parameter, $\rho \in \C^3$  is independent of $\tau$ and satisfies $\Re\rho \cdot \Im\rho = 0$ and $|\Re\rho| = |\Im\rho| = 1$, and $\zeta_1 = \mathcal O(1)$ as $\tau \to \infty$. Then
\begin{align*}
f &= -\Delta a - \nabla(\nabla\beta\cdot a) - \nabla\alpha\times \nabla\times a  - i\omega (\sigma\mu - \sigma_0 \mu_0) a
- 2i \zeta_1\cdot \nabla a - (\nabla\beta\cdot a) i\zeta_1 - \nabla\alpha \times (i\zeta_1 \times a)\\
& \quad - i\tau ((\nabla(\beta-\beta^\sharp)\cdot a) \rho + \nabla(\alpha-\alpha^\sharp) \times (\rho \times a))
 - i\tau (2\rho\cdot \nabla a + (\nabla\beta^\sharp\cdot a) \rho + \nabla\alpha^\sharp \times (\rho \times a)).
\end{align*}
In order to get $\|f \|_{L^2_{\delta+1}} = o(\tau)$ as $\tau \to
\infty$, for $-1 < \delta < 0$, we should construct $a$ satisfying the
transport equation, that is,
\begin{equation}\label{transport eqn}
2\rho\cdot \nabla a + (\nabla\beta^\sharp\cdot a) \rho + \nabla\alpha^\sharp \times (\rho \times a) = o(1)\quad\text{in}\quad L^2_{\delta+1}\quad\text{as}\quad\tau\to\infty.
\end{equation}
We
get
\begin{equation}\label{transport eqn rewritten}
2\rho \cdot \nabla a + (\nabla\beta^\sharp \cdot a) \rho + (\nabla\alpha^\sharp \cdot a) \rho - (\rho \cdot \nabla\alpha^\sharp) a = o(1)\quad\text{in}\quad L^2_{\delta+1}\quad\text{as}\quad\tau\to\infty.
\end{equation}
For arbitrary $s_0\in \R$, we seek solutions of \eqref{transport eqn rewritten} in the form
$$
a = e^{-\alpha^\sharp / 2} \rho + s_0 e^{\alpha^\sharp / 2}e^{\chi_\tau \Psi^\sharp} \overline{\rho},
$$
where $\chi_\tau(x) := \chi(\tau^{-\theta}x)$, $0< \theta < 1/2$ with $\chi\in C^\infty_0(\R^3)$ such that $\chi(x) \equiv 1$ for $|x| < 1/2$ and $\chi(x) \equiv 0$ for $|x| \ge 1$. Then
\begin{equation}\label{nabla a}
\p_j a = - \frac{1}{2} \p_j \alpha^\sharp e^{-\alpha^\sharp / 2} \rho + \frac{s_0}{2}\p_j \alpha^\sharp e^{\alpha^\sharp / 2}e^{\chi_\tau \Psi^\sharp} \overline{\rho}
 + \frac{s_0}{\tau^\theta} (\p_j\chi)(\theta^{-\theta} x)\Psi^\sharp e^{\alpha^\sharp / 2}e^{\chi_\tau \Psi^\sharp} \overline{\rho} + s_0 \chi_\tau \p_j \Psi^\sharp e^{\alpha^\sharp / 2}e^{\chi_\tau \Psi^\sharp} \overline{\rho}.
\end{equation}
Substituting these into \eqref{transport eqn rewritten}, we come to
$$
s_0 \Big\{ \tau^{-\theta}\Psi^\sharp 2\rho\cdot(\nabla \chi)(\tau^{-\theta}\,\cdot\,) + \chi_\tau 2\rho \cdot \nabla \Psi^\sharp + \overline{\rho} \cdot \nabla(\alpha^\sharp + \beta^\sharp) \Big\} e^{\alpha^\sharp / 2} e^{\chi_\tau \Psi^\sharp} \overline{\rho} = o(1)\quad\text{in}\quad L^2_{\delta+1}\quad
\text{as $\tau\to\infty$}. 
$$
We observe that by \eqref{supports of derivatives of alpha and beta}, $\nabla(\alpha^\sharp + \beta^\sharp) = \chi_\tau \nabla(\alpha^\sharp + \beta^\sharp)$ for large enough $\tau>0$. Therefore, it is enough to find $\Psi^\sharp$ in $C^\infty(\R^3)$ with a nice control on $\|\Psi^\sharp\|_{L^\infty(\R^3)}$ and satisfying
\begin{equation}\label{eqns for Phi and Psi} 
2\rho \cdot \nabla \Psi^\sharp + \overline{\rho} \cdot \nabla(\alpha^\sharp + \beta^\sharp) = 0 \quad\text{in}\quad \R^3.
\end{equation}
Since $\rho\cdot \rho = 0$ and $\Re\rho \cdot \Im\rho = 0$, the operator $N_\rho := \rho \cdot \nabla$ is just the $\overline\p$-operator in certain linear coordinates. Its inverse is defined as
$$
N_\rho^{-1} f(x) := \frac{1}{2\pi} \int_{\R^2} \frac{f(x - y_1 \Re\rho - y_2 \Im\rho)}{y_1 + i y_2}\,dy,\quad f\in C_0(\R^3).
$$
Then by \cite[Lemma~4.6]{salo2004inverse},
$$
\Psi^\sharp(x,\rho;\tau) := - \frac{1}{2} N_\rho^{-1} \{\overline{\rho} \cdot \nabla(\alpha^\sharp + \beta^\sharp)\} \in C^{\infty}(\R^3)
$$
satisfies equation \eqref{eqns for Phi and Psi}. It also follows from \cite[Lemma~4.6]{salo2004inverse} and \eqref{L^infty norms of derivatives of alpha and beta errors} that
\begin{equation}\label{estimate W^1,infty of Psi^sharp}
\|\Psi^\sharp\|_{W^{1,\infty}(\R^3)} \le \mathcal O(1) \big\{\|\alpha^\sharp\|_{W^{2,\infty}(\R^3)} + \|\beta^\sharp\|_{W^{2,\infty}(\R^3)} \big\} = \mathcal O(1)\quad\text{as}\quad \tau\to\infty.
\end{equation}
Furthermore, \cite[Lemma~4.6]{salo2004inverse} and \eqref{eqn::L^infty norms of derivatives of alpha & beta} imply that
\begin{equation}\label{estimate W^k,infty of Psi^sharp}
\begin{aligned}
\|\Psi^\sharp\|_{W^{k,\infty}(\R^3)} &\le \mathcal O_k(1) \big\{\|\alpha^\sharp\|_{W^{k+1,\infty}(\R^3)} + \|\beta^\sharp\|_{W^{k+1,\infty}(\R^3)} \big\}\\
&= \mathcal O_k(\tau^{k\epsilon+\epsilon})\quad\text{for}\quad k = 0,1,\ldots \quad\text{as}\quad \tau\to\infty.
\end{aligned}
\end{equation}
We set
$$
\Psi(x,\rho) := -\frac{1}{2} N_\rho^{-1} \{\overline{\rho} \cdot \nabla(\alpha + \beta)\} \in L^\infty(\R^3).
$$
Then \cite[Lemma~3.1]{sylvester1987global} together with \eqref{supports of derivatives of alpha and beta} and \eqref{L^infty norms of derivatives of alpha and beta errors} implies that
$$
\|\Psi^\sharp(\cdot,\rho;\tau) - \Psi(\cdot,\rho)\|_{L^2_{\rm loc}(\R^3)} = o(1)\quad\text{as}\quad \tau\to \infty.
$$
Finally, by \cite[Lemma~3.1]{salo2006semiclassical} together with \eqref{eqn::L^infty norms of derivatives of alpha & beta} and \eqref{eqn::W^2,infty norms of alpha & beta sharps},
\begin{equation}\label{estimate pointwise derivatives of Psi^sharp}
\begin{aligned}
|\p^\kappa \Psi^\sharp(x,\rho;\tau)| &\le \mathcal O_\kappa(1) \begin{cases}
(1 + |x_T|^2)^{-1/2}\chi_{B(0,R)}(x_\perp),&\text{if } |\kappa| = 0, 1,\\
\tau^{|\kappa|\epsilon + \epsilon}(1 + |x_T|^2)^{-1/2}\chi_{B(0,R)}(x_\perp),&\text{otherwise}
\end{cases}\\
&\quad\,\,\text{as}\quad \tau\to\infty,
\end{aligned}
\end{equation}
where $x_T$ is the projection of $x$ onto $\Span\{\rho_1,\rho_2\}$ and $x_\perp = x - x_T$, and $R>0$ is such that $B(0,R)$ contains $\supp(\sigma - \sigma_0)$ and $\supp(\mu - \mu_0)$.\smallskip

It follows that
\begin{equation}\label{integral Japanese symbol estimate}
\int_{|x_T| \le \tau^\theta,\, |x_\perp| \le R} (1 + |x_T|^2)^{-1} (1 + |x|^2)^{\delta+1}\, dx
\le \mathcal O(1) \int_{|x_T| \le \tau^\theta} (1 + |x_T|^2)^{\delta}\, dx_T = \mathcal O(\tau^{2(\delta+1)\theta})\quad\text{as}\quad\tau \to \infty.
\end{equation}
Similarly,
\begin{equation}\label{integral Japanese symbol estimate -j}
\int_{|x_T| \le \tau^\theta,\, |x_\perp| \le R} (1 + |x_T|^2)^{-j} (1 + |x|^2)^{\delta+1}\, dx
\le \mathcal O(1) \Big(1-\frac{1}{(1+\tau^{2\theta})^{j-2-\delta}}\Big) = \mathcal O(1)\quad\text{as}\quad\tau \to \infty\quad\text{for}\quad j\ge 2.
\end{equation}
We have

\begin{Lemma}
Let $\chi_\tau$ and $\Psi^\sharp$ be as above. Then
\begin{align}
\|\p_j(\chi_\tau \Psi^\sharp)\|_{L^2_{\delta+1}} &= \mathcal O(\tau^{(\delta+1)\theta}),\label{lemma tech 1}\\
\|\p_j(\chi_\tau \Psi^\sharp) \p_k(\chi_\tau \Psi^\sharp)\|_{L^2_{\delta+1}} &= \mathcal O(1),\label{lemma tech 2}\\
\|\p_j(\chi_\tau \Psi^\sharp) \p_k(\chi_\tau \Psi^\sharp) \p_l(\chi_\tau \Psi^\sharp)\|_{L^2_{\delta+1}} &= \mathcal O(1),\label{lemma tech 3}\\
\|\p_j\p_k(\chi_\tau \Psi^\sharp)\|_{L^2_{\delta+1}} &= o(\tau),\label{lemma tech 4}\\
\|\p_j\p_k(\chi_\tau \Psi^\sharp) \p_l(\chi_\tau \Psi^\sharp)\|_{L^2_{\delta+1}} &= \mathcal O(\tau^{3\epsilon}),\\
\|\p_l\p_j\p_k(\chi_\tau \Psi^\sharp)\|_{L^2_{\delta+1}} &= o(\tau^{1+\epsilon})\label{lemma tech 5}
\end{align}
as $\tau \to \infty$.
\end{Lemma}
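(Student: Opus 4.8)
The plan is to expand each expression by the Leibniz rule and bound every resulting term against the weighted integrals \eqref{integral Japanese symbol estimate} and \eqref{integral Japanese symbol estimate -j}, using the pointwise estimates \eqref{estimate pointwise derivatives of Psi^sharp}. First I would record the common support. On the support of $\chi_\tau$ one has $|x|\le\tau^\theta$, hence $|x_T|\le\tau^\theta$, while every factor $\p^\kappa\Psi^\sharp$ carries the cutoff $\chi_{B(0,R)}(x_\perp)$; consequently all terms below are supported in $K_\tau := \{|x_T|\le\tau^\theta,\ |x_\perp|\le R\}$, which is precisely the region over which \eqref{integral Japanese symbol estimate} and \eqref{integral Japanese symbol estimate -j} are evaluated. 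A derivative landing on $\chi_\tau$ produces a factor $\tau^{-\theta}(\p\chi)(\tau^{-\theta}\,\cdot\,)$, i.e.\ a gain of $\tau^{-\theta}$; since $\theta>0$, such terms are strictly subdominant, so the worst contribution to $\p^\kappa(\chi_\tau\Psi^\sharp)$ is $\chi_\tau\,\p^\kappa\Psi^\sharp$, where all derivatives fall on $\Psi^\sharp$.

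The bookkeeping is then uniform. In a product of $n$ derivative factors, each contributes $(1+|x_T|^2)^{-1/2}$ by \eqref{estimate pointwise derivatives of Psi^sharp}, so the integrand of the squared $L^2_{\delta+1}$ norm is bounded on $K_\tau$ by $\tau^{2a}(1+|x_T|^2)^{-n}(1+|x|^2)^{\delta+1}$, where $\tau^a$ collects the $\tau$-powers from \eqref{estimate pointwise derivatives of Psi^sharp} (namely $\tau^{(|\kappa_i|+1)\epsilon}$ for each factor with $|\kappa_i|\ge 2$, and $\mathcal O(1)$ otherwise). I would then integrate using \eqref{integral Japanese symbol estimate} when $n=1$, which gives $\mathcal O(\tau^{2(\delta+1)\theta})$, and \eqref{integral Japanese symbol estimate -j} when $n\ge 2$, which gives $\mathcal O(1)$. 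This immediately yields the first three estimates: \eqref{lemma tech 1} has $n=1$, $a=0$, so the norm is $\mathcal O(\tau^{(\delta+1)\theta})$; \eqref{lemma tech 2} has $n=2$ and \eqref{lemma tech 3} has $n=3$, both with $a=0$, giving $\mathcal O(1)$.

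For the remaining three I would track the $\tau$-powers through the same scheme. The dominant term of \eqref{lemma tech 4} is $\chi_\tau\,\p_j\p_k\Psi^\sharp$ with $n=1$ and $\tau^a=\tau^{3\epsilon}$, so the norm is $\mathcal O(\tau^{3\epsilon+(\delta+1)\theta})$; the mixed second–first estimate pairs a second derivative with a first, giving $n=2$, $\tau^a=\tau^{3\epsilon}$, and norm $\mathcal O(\tau^{3\epsilon})$; and \eqref{lemma tech 5} has $n=1$ and $\tau^a=\tau^{4\epsilon}$ from the third derivative, giving $\mathcal O(\tau^{4\epsilon+(\delta+1)\theta})$.

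The main obstacle—really the only nontrivial point—is the final exponent arithmetic together with the check that the $\chi_\tau$-derivative terms are harmless. Since $\epsilon<1/8$, $0<\theta<1/2$ and $0<\delta+1<1$, one has $3\epsilon+(\delta+1)\theta<\tfrac{3}{8}+\tfrac12=\tfrac78<1$; this gives $\tau^{3\epsilon+(\delta+1)\theta}=o(\tau)$ for \eqref{lemma tech 4} and $\tau^{4\epsilon+(\delta+1)\theta}=o(\tau^{1+\epsilon})$ for \eqref{lemma tech 5}, as required. For the subdominant terms, a derivative on $\chi_\tau$ replaces one $\Psi^\sharp$-derivative factor by a bounded factor and inserts $\tau^{-m\theta}$ for $m$ such derivatives, which strictly lowers the $\tau$-exponent below that of the leading term; for instance, in \eqref{lemma tech 4} the terms $(\p_j\chi_\tau)(\p_k\Psi^\sharp)$ and $(\p_j\p_k\chi_\tau)\Psi^\sharp$ contribute $\mathcal O(\tau^{\delta\theta})$ and $\mathcal O(\tau^{(\delta-1)\theta})$, both $o(1)$ since $\delta<0$. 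Thus they are absorbed into the stated bounds.
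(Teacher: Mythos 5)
Your proposal is correct and follows essentially the same route as the paper's proof: Leibniz expansion, the pointwise bounds \eqref{estimate pointwise derivatives of Psi^sharp} on each factor, and integration via \eqref{integral Japanese symbol estimate} for terms with one decaying factor or \eqref{integral Japanese symbol estimate -j} for products of two or more, followed by the same exponent arithmetic ($3\epsilon+(\delta+1)\theta<1$ and $4\epsilon+(\delta+1)\theta<1+\epsilon$). The only difference is presentational: you systematize the bookkeeping into a uniform scheme, whereas the paper writes out each Leibniz term explicitly, including the subdominant terms where derivatives fall on $\chi_\tau$.
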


\begin{proof}
Using \eqref{estimate pointwise derivatives of Psi^sharp} and \eqref{integral Japanese symbol estimate}, we obtain
\begin{align*}
\|\p_j(\chi_\tau \Psi^\sharp)\|_{L^2_{\delta+1}} &\le \frac{1}{\tau^\theta}\|\p_j\chi(\tau^{-\theta}\,\cdot\,) \Psi^\sharp\|_{L^2_{\delta+1}} + \|\chi_\tau \p_j\Psi^\sharp\|_{L^2_{\delta+1}} \\
&\le \mathcal O\Big(\frac{1}{\tau^\theta} + 1\Big) \int_{|x_T| \le \tau^\theta,\, |x_\perp| \le R} (1 + |x_T|^2)^{-1} (1 + |x|^2)^{\delta+1}\, dx\\
& = \mathcal O(\tau^{(\delta+1)\theta}) \quad\text{as}\quad\tau \to \infty.
\end{align*}
The other estimates follow readily.
\end{proof}

\subsection{Estimating $\|f\|_{L^2_{\delta+1}}$}\label{section on L^2 norm of f}

With our choice of $a$, we have
\begin{equation}\label{transport equation part}
2\rho\cdot \nabla a + (\nabla\beta^\sharp\cdot a) \rho + \nabla\alpha^\sharp \times (\rho \times a) = s_0 \tau^{-\theta} 2\rho\cdot(\nabla \chi)(\tau^{-\theta}\,\cdot\,) \Psi^\sharp e^{\alpha^\sharp / 2} e^{\chi_\tau \Psi^\sharp} \overline{\rho}.
\end{equation}
Then, as in the proof of \eqref{lemma tech 1}, we use \eqref{eqn::W^2,infty norms of alpha & beta sharps}, \eqref{estimate W^1,infty of Psi^sharp}, \eqref{estimate pointwise derivatives of Psi^sharp} and \eqref{integral Japanese symbol estimate} to obtain
\begin{multline*}
\| i\tau (2\rho\cdot \nabla a + (\nabla\beta^\sharp\cdot a) \rho + \nabla\alpha^\sharp \times (\rho \times a)) \|_{L^2_{\delta+1}} \\
\le \mathcal O(\tau^{1-\theta}) \Big(\int_{|x_T| \le \tau^\theta,\, |x_\perp| \le R} (1 + |x_T|^2)^{-1} (1 + |x|^2)^{\delta+1}\, dx \Big)^{1/2} = \mathcal O(\tau^{1+\delta\theta})\quad\text{as}\quad\tau \to \infty.
\end{multline*}
Using \eqref{sigma and mu are constants outside of a compact set},
\eqref{supports of derivatives of alpha and beta},
\eqref{L^infty norms of derivatives of alpha and beta errors},
\eqref{eqn::W^2,infty norms of alpha & beta sharps} and
\eqref{estimate W^1,infty of Psi^sharp}, it is then straightforward to show
that
\begin{align*}
\| i\tau ((\nabla(\beta-\beta^\sharp)\cdot a) \rho + \nabla(\alpha-\alpha^\sharp) \times (\rho \times a)) \|_{L^2_{\delta+1}} &= o(1)\ \quad\text{as}\quad\tau \to \infty,\\[0.05cm]
\| (\nabla\beta\cdot a) i\zeta_1 + \nabla\alpha \times (i\zeta_1 \times a) \|_{L^2_{\delta+1}} &= \mathcal O(1)\quad\text{as}\quad\tau \to \infty,\\
\|i\omega (\sigma\mu - \sigma_0 \mu_0) a\|_{L^2_{\delta+1}} &= \mathcal O(1)\quad\text{as}\quad\tau \to \infty.
\end{align*}
Now, using expressions \eqref{eqn::W^2,infty norms of alpha & beta sharps} and \eqref{estimate W^1,infty of Psi^sharp}, we find that
$$
\|\p_j a \|_{L^2_{\delta+1}} \le \mathcal O(1)\|\p_j\alpha^\sharp\|_{L^2_{\delta+1}} + \mathcal O(1)\|\p_j(\chi_\tau \Psi^\sharp)\|_{L^2_{\delta+1}}
$$
Thus with \eqref{supports of derivatives of alpha and beta}, \eqref{eqn::W^2,infty norms of alpha & beta sharps} and \eqref{lemma tech 1}, we obtain
\begin{equation}\label{L2 norm of der a}
   \|\p_j a \|_{L^2_{\delta+1}}
   = o(\tau^{\theta})\quad\text{as}\quad\tau \to \infty,
\end{equation}
and therefore,
$$
\| 2i \zeta_1\cdot \nabla a \|_{L^2_{\delta+1}} = o(\tau^{\theta})\quad\text{as}\quad\tau \to \infty,
$$
$$
\|\nabla\alpha\times \nabla\times a\|_{L^2_{\delta+1}} \le \mathcal O(1) \|\nabla\times a\|_{L^2_{\delta+1}} \le \mathcal O(1)\sum_{j=1}^3\|\p_j a \|_{L^2_{\delta+1}} = o(\tau^{\theta})\quad\text{as}\quad\tau \to \infty
$$
and
$$
\|\nabla(\nabla\beta\cdot a)\|_{L^2_{\delta+1}} \le \|(\nabla\nabla\beta) a\|_{L^2_{\delta+1}} + \mathcal O(1) \sum_{j=1}^3 \|\p_j a\|_{L^2_{\delta+1}} = o(\tau^\theta)\quad\text{as}\quad\tau \to \infty.
$$
In the last step, we used \eqref{sigma and mu are constants outside of a compact set} and that $\supp(\nabla\nabla\beta)$ is compact.
Finally, by \eqref{eqn::W^2,infty norms of alpha & beta sharps} and
\eqref{estimate W^1,infty of Psi^sharp}
\begin{multline*}
\|\p_j\p_k a\|_{L^2_{\delta+1}} \le \mathcal O(1) \|\p_j\p_k \alpha^\sharp\|_{L^2_{\delta+1}} + \mathcal O(1) \|\p_j\alpha^\sharp \p_j\alpha^\sharp\|_{L^2_{\delta+1}}\\
+ \mathcal O(1)\|\p_j\p_k(\chi_\tau \Psi^\sharp)\|_{L^2_{\delta+1}} + \mathcal O(1)\|\p_j(\chi_\tau \Psi^\sharp)\|_{L^2_{\delta+1}}\\
+ \mathcal O(1)\|\p_k(\chi_\tau \Psi^\sharp)\|_{L^2_{\delta+1}} + \mathcal O(1)\|\p_j(\chi_\tau \Psi^\sharp)\p_k(\chi_\tau \Psi^\sharp)\|_{L^2_{\delta+1}}
\end{multline*}
whence, by \eqref{supports of derivatives of alpha and beta}, \eqref{eqn::W^2,infty norms of alpha & beta sharps}, \eqref{lemma tech 1}, \eqref{lemma tech 2} and \eqref{lemma tech 4},
\begin{equation}\label{L2 norm of second der of a}
   \|\p_j\p_k a\|_{L^2_{\delta+1}}
   = o(\tau)\quad\text{as}\quad\tau \to \infty.
\end{equation}
Thus,
$$
\|\Delta a\|_{L^2_{\delta+1}} \le \sum_{j=1}^3 \|\p_j^2 a\|_{L^2_{\delta+1}} = o(\tau)\quad\text{as}\quad\tau \to \infty.
$$
Combining all of the above estimates, we come to
$$
\|f\|_{L^2_{\delta+1}} = o(\tau)\quad\text{as}\quad \tau\to\infty.
$$

\subsection{Estimating $\|\nabla_\zeta \times f\|_{L^2_{\delta+1}}$}
\label{section on L^2 norm of weighted curl of f}

Since
$$
\nabla_\zeta\times f = \nabla\times f + i\zeta\times f\quad\text{and}\quad \|i\zeta\times f\|_{L^2_{\delta+1}} = o(\tau^2),
$$
we need to estimate $\|\nabla\times f\|_{L^2_{\delta+1}}$. By straightforward calculations,
\begin{align*}
  \nabla\times f &= -\nabla\times \Delta a - \nabla\times(\nabla\alpha\times \nabla\times a)  - i\omega \nabla\times\big((\sigma\mu - \sigma_0 \mu_0) a \big)
  - 2i \nabla\times(\zeta_1\cdot \nabla a) - \nabla (\nabla\beta\cdot a)\times i\zeta_1 - \nabla\times\big(\nabla\alpha \times (i\zeta_1 \times a)\big)\\
& \quad - i\tau \nabla(\nabla(\beta-\beta^\sharp)\cdot a)\times \rho - i\tau\nabla\times\big(\nabla(\alpha-\alpha^\sharp) \times (\rho \times a)\big)
 - i\tau \nabla\times \big(2\rho\cdot \nabla a + (\nabla\beta^\sharp\cdot a) \rho + \nabla\alpha^\sharp \times (\rho \times a)\big).
\end{align*}
By \eqref{transport equation part},
$$
\nabla\times \big(2\rho\cdot \nabla a + (\nabla\beta^\sharp\cdot a) \rho + \nabla\alpha^\sharp \times (\rho \times a)\big) = \frac{s_0}{\tau^{\theta}} \nabla\Big(2\rho\cdot(\nabla \chi)(\tau^{-\theta}\,\cdot\,) \Psi^\sharp e^{\alpha^\sharp / 2} e^{\chi_\tau \Psi^\sharp}\Big) \times \overline{\rho}.
$$
Then, as in the proof of \eqref{lemma tech 4}, we use
\eqref{eqn::W^2,infty norms of alpha & beta sharps} and
\eqref{estimate W^1,infty of Psi^sharp},
\begin{align*}
\|i\tau \nabla &\times \big(2\rho\cdot \nabla a + (\nabla\beta^\sharp\cdot a) \rho + \nabla\alpha^\sharp \times (\rho \times a)\big)\|_{L^2_{\delta+1}}
\le \mathcal O(\tau^{1-\theta})\sum_{j,k=1}^3\|\p_j(\p_k\chi(\tau^{-\theta}\,\cdot\,)\Psi^\sharp e^{\alpha^\sharp / 2} e^{\chi_\tau \Psi^\sharp})\|_{L^2_{\delta+1}}\\
&\le \mathcal O(\tau^{1-2\theta})\sum_{j,k=1}^3\|\p_j\p_k\chi(\tau^{-\theta}\,\cdot\,)\Psi^\sharp\|_{L^2_{\delta+1}} + \mathcal O(\tau^{1-\theta})\sum_{j,k=1}^3\|\p_k\chi(\tau^{-\theta}\,\cdot\,)\p_j\Psi^\sharp\|_{L^2_{\delta+1}}
+ \mathcal O(\tau^{1-\theta})\sum_{j,k=1}^3\|\p_k\chi(\tau^{-\theta}\,\cdot\,)\p_j\alpha^\sharp \Psi^\sharp\|_{L^2_{\delta+1}} \\
&\quad+ \mathcal O(\tau^{1-2\theta})\sum_{j,k=1}^3\|\p_k\chi(\tau^{-\theta}\,\cdot\,)\p_j\chi(\tau^{-\theta}\,\cdot\,) \Psi^\sharp{}^2\|_{L^2_{\delta+1}}
+ \mathcal O(\tau^{1-\theta})\sum_{j,k=1}^3\|\p_k\chi(\tau^{-\theta}\,\cdot\,)\p_j\Psi^\sharp \chi_\tau \Psi^\sharp\|_{L^2_{\delta+1}}
= o(\tau)
   \quad\text{as}\quad \tau\to\infty,
\end{align*}
where in the last step, we also used \eqref{supports of derivatives of
  alpha and beta}, \eqref{estimate pointwise derivatives of
  Psi^sharp}, \eqref{integral Japanese symbol estimate} and
\eqref{integral Japanese symbol estimate -j}. Next, using \eqref{sigma
  and mu are constants outside of a compact set}, \eqref{supports of
  derivatives of alpha and beta}, \eqref{L^infty norms of derivatives
  of alpha and beta errors}, \eqref{eqn::W^2,infty norms of alpha &
  beta sharps}, \eqref{estimate W^1,infty of Psi^sharp} and \eqref{L2
  norm of der a}, we obtain
\begin{equation*}
\| i\tau \nabla(\nabla(\beta-\beta^\sharp)\cdot a)\times \rho\|_{L^2_{\delta+1}}\le \mathcal O(\tau)\sum_{j,k} \|\p_j\p_k (\beta - \beta^\sharp)a\|_{L^2_{\delta+1}} + \mathcal O(\tau)\sum_{j,k}\|\p_k (\beta - \beta^\sharp) \p_j a\|_{L^2_{\delta+1}}
= o(\tau^{1+\theta})\quad\text{as}\quad \tau\to\infty.
\end{equation*}
Similarly,
\begin{equation*}
\| i\tau \nabla\times\big(\nabla(\alpha-\alpha^\sharp) \times (\rho \times a)\big) \|_{L^2_{\delta+1}}\le \mathcal O(\tau)\sum_{j,k} \|\p_j\p_k (\alpha-\alpha^\sharp)a\|_{L^2_{\delta+1}} + \mathcal O(\tau)\sum_{j,k}\|\p_k (\alpha-\alpha^\sharp) \p_j a\|_{L^2_{\delta+1}} = o(\tau^{1+\theta}),
\end{equation*}
\begin{equation*}
\|\nabla (\nabla\beta\cdot a) \times i\zeta_1 \|_{L^2_{\delta+1}}\le \mathcal O(1)\sum_{j,k} \|\p_j\p_k \beta\, a\|_{L^2_{\delta+1}} + \mathcal O(1)\sum_{j,k}\|\p_k \beta\, \p_j a\|_{L^2_{\delta+1}} = o(\tau^{\theta}),
\end{equation*}
\begin{equation*}
\|\nabla\times\big(\nabla\alpha \times (i\zeta_1 \times a)\big) \|_{L^2_{\delta+1}}\le \mathcal O(1)\sum_{j,k} \|\p_j\p_k \alpha\, a\|_{L^2_{\delta+1}} + \mathcal O(1)\sum_{j,k}\|\p_k \alpha\, \p_j a\|_{L^2_{\delta+1}} = o(\tau^{\theta}),
\end{equation*}
and
\begin{equation*}
\|i\omega\nabla\times\big((\sigma\mu - \sigma_0 \mu_0) a\big)\|_{L^2_{\delta+1}}\le \mathcal O(1)\sum_{j} \|\p_j(\sigma\mu) a\|_{L^2_{\delta+1}} + \mathcal O(1)\sum_{j,k}\|(\sigma\mu - \sigma_0\mu_0) \p_j a\|_{L^2_{\delta+1}} = o(\tau^\theta)
\end{equation*}
as $\tau \to \infty$. Using \eqref{L2 norm of second der of a} we find that
$$
\|2i\nabla\times(\zeta_1\cdot \nabla a)\|_{L^2_{\delta+1}} \le \mathcal O(1) \sum_{j,k} \|\p_j\p_k a\|_{L^2_{\delta+1}} = o(\tau)
$$
and
$$
\|\nabla\times(\nabla\alpha\times \nabla\times a)\|_{L^2_{\delta+1}}\le \mathcal O(1)\sum_{j,k,l} \|\p_j\p_k \alpha\, \p_l a\|_{L^2_{\delta+1}} + \mathcal O(1)\sum_{j,k,l}\|\p_k \alpha\, \p_j\p_l a\|_{L^2_{\delta+1}} = o(\tau)
$$
as $\tau \to \infty$.

Using \eqref{eqn::W^2,infty norms of alpha & beta sharps} and \eqref{estimate W^1,infty of Psi^sharp}, we estimate
\begin{align*}
\|\p_l\p_j\p_k a\|_{L^2_{\delta+1}} &\le \mathcal O(1)\|\p_l\p_j\p_k \alpha^\sharp\|_{L^2_{\delta+1}} + \mathcal O(1)\|\p_l\p_j \alpha^\sharp \p_k \alpha^\sharp\|_{L^2_{\delta+1}}
+ \mathcal O(1)\|\p_j \alpha^\sharp \p_l\p_k \alpha^\sharp\|_{L^2_{\delta+1}}
 \\ &\
 + \mathcal O(1)\|\p_j\p_k \alpha^\sharp \p_l \alpha^\sharp\|_{L^2_{\delta+1}}
 + \mathcal O(1)\|\p_l \alpha^\sharp \p_j \alpha^\sharp \p_k \alpha^\sharp\|_{L^2_{\delta+1}}
 + \mathcal O(1) \|\p_l\p_j\p_k(\chi_\tau \Psi^\sharp)\|_{L^2_{\delta+1}}
 \\ &\
 + \mathcal O(1) \|\p_l\p_j\alpha^\sharp \p_k(\chi_\tau \Psi^\sharp)\|_{L^2_{\delta+1}}
 + \mathcal O(1) \|\p_l\p_j(\chi_\tau \Psi^\sharp) \p_k\alpha^\sharp\|_{L^2_{\delta+1}} + \mathcal O(1) \|\p_l\p_j(\chi_\tau \Psi^\sharp) \p_k(\chi_\tau \Psi^\sharp)\|_{L^2_{\delta+1}}
 \\ &\
 + \mathcal O(1) \|\p_j\alpha^\sharp \p_l\p_k(\chi_\tau \Psi^\sharp)\|_{L^2_{\delta+1}} + \mathcal O(1) \|\p_j(\chi_\tau \Psi^\sharp) \p_l\p_k\alpha^\sharp\|_{L^2_{\delta+1}} + \mathcal O(1) \|\p_j(\chi_\tau \Psi^\sharp) \p_l\p_k(\chi_\tau \Psi^\sharp)\|_{L^2_{\delta+1}}
 \\ &\
 + \mathcal O(1) \|\p_j\p_k\alpha^\sharp \p_l(\chi_\tau \Psi^\sharp)\|_{L^2_{\delta+1}} + \mathcal O(1) \|\p_j \p_k(\chi_\tau \Psi^\sharp) \p_l\alpha^\sharp\|_{L^2_{\delta+1}}+ \mathcal O(1) \|\p_j \p_k(\chi_\tau \Psi^\sharp) \p_l(\chi_\tau \Psi^\sharp)\|_{L^2_{\delta+1}}
 \\ &\
 + \mathcal O(1) \|\p_j\alpha^\sharp \p_k\alpha^\sharp \p_l(\chi_\tau \Psi^\sharp)\|_{L^2_{\delta+1}}
 + \mathcal O(1) \|\p_j\alpha^\sharp \p_k(\chi_\tau \Psi^\sharp) \p_l\alpha^\sharp\|_{L^2_{\delta+1}}
 + \mathcal O(1) \|\p_j\alpha^\sharp \p_k(\chi_\tau \Psi^\sharp) \p_l(\chi_\tau \Psi^\sharp)\|_{L^2_{\delta+1}}
 \\ &\
 + \mathcal O(1) \|\p_j(\chi_\tau \Psi^\sharp) \p_k\alpha^\sharp \p_l\alpha^\sharp\|_{L^2_{\delta+1}}
 + \mathcal O(1) \|\p_j(\chi_\tau \Psi^\sharp) \p_k\alpha^\sharp \p_l(\chi_\tau \Psi^\sharp)\|_{L^2_{\delta+1}}
 + \mathcal O(1) \|\p_j(\chi_\tau \Psi^\sharp) \p_k(\chi_\tau \Psi^\sharp) \p_l\alpha^\sharp\|_{L^2_{\delta+1}}
 \\ &\hspace*{4.55cm}
 + \mathcal O(1) \|\p_j(\chi_\tau \Psi^\sharp) \p_k(\chi_\tau \Psi^\sharp) \p_l(\chi_\tau \Psi^\sharp)\|_{L^2_{\delta+1}}\quad\text{as}\quad\tau \to \infty
\end{align*}
and conclude that
\[
   \|\p_l\p_j\p_k a\|_{L^2_{\delta+1}}
       = o(\tau^{1+\epsilon})\quad\text{as}\quad\tau \to \infty.\ \
\]
This implies that
$$
\|\nabla\times \Delta a\|_{L^2_{\delta+1}} = o(\tau^{1+\epsilon})\quad\text{as}\quad\tau \to \infty.
$$
Combining all of these, we finally come to $\|\nabla\times
f\|_{L^2_{\delta+1}} = o(\tau^{1+\epsilon})$ and, hence,
$\|\nabla_\zeta\times f\|_{L^2_{\delta+1}} =
o(\tau^2)\quad\text{as}\quad\tau \to \infty$.

\subsection{Construction of complex geometric optics solutions} Now, we are ready to construct complex geometric optics solutions for the system \eqref{eqn3-1} and \eqref{eqn3-2} which is equivalent to
\begin{align}
e^{-i \zeta\cdot x}L_{\sigma,\mu}(e^{i \zeta\cdot x}r) & = - f, \label{eqn LE=0 rewritten}\\
\nabla_\zeta \cdot r + \nabla\log\mu \cdot r & = - \nabla_\zeta \cdot a + \nabla\log\mu \cdot a. \label{eqn div(sigma E)=0 rewritten}
\end{align}
According to the discussion in Section~\ref{section on L^2 norm of f}, we have $\|f\|_{L^2_{\delta+1}} = o(\tau)$ as $\tau \to \infty$.\smallskip

First, we need to show that for sufficiently large $|\zeta|$ there is $r\in L^2_\delta$ solving \eqref{eqn LE=0 rewritten}. Using \eqref{eqn:: conj grad and laplacian},
\begin{equation}\label{another pde for r}
e^{-i \zeta\cdot x} L_{\sigma,\mu}(e^{i \zeta\cdot x}r) = -\Delta_\zeta r - \nabla_\zeta(\nabla\log\mu\cdot r) - \nabla\log\sigma\times \nabla_\zeta\times r  - i\omega (\sigma\mu - \sigma_0 \mu_0) r.
\end{equation}
We have
$$
\nabla_\zeta(\nabla\log\mu\cdot r) = \nabla\log\mu \times
\nabla_\zeta\times r + \nabla\log\mu \cdot \nabla_\zeta r +
\nabla\nabla(\log\mu)r
$$
Therefore, \eqref{eqn LE=0 rewritten} can be written as
\begin{equation}\label{pre final pde for r}
-\Delta_\zeta r - \nabla\log\mu \cdot \nabla_\zeta r - \nabla\log(\sigma\mu) \times \nabla_\zeta\times r - V_1 r = - f,
\end{equation}
where
$$
V_1 := \nabla\nabla(\log\mu) + i\omega (\sigma\mu - \sigma_0 \mu_0).
$$
We need to deal with the third term on the left hand-side of
\eqref{pre final pde for r}. We define $Q := \nabla_\zeta\times r$, and
find that
$$
\Delta_\zeta r = \nabla_\zeta \nabla_\zeta \cdot r - \nabla_\zeta \times \nabla_\zeta \times r.
$$
Also, it follows from \eqref{eqn div(sigma E)=0 rewritten} that
$$
\nabla_\zeta \nabla_\zeta \cdot r = - \nabla_\zeta \nabla_\zeta \cdot a - \nabla_\zeta(\nabla\log\mu \cdot a) - \nabla_\zeta(\nabla\log\mu \cdot r).
$$
Substituting these into \eqref{another pde for r}, we come to
$$
e^{-i \zeta\cdot x} L_{\sigma,\mu}(e^{i \zeta\cdot x}r) = \nabla_\zeta \times \nabla_\zeta \times r + \nabla_\zeta \nabla_\zeta \cdot a + \nabla_\zeta(\nabla\log\mu \cdot a) - \nabla\log\sigma\times \nabla_\zeta\times r  - i\omega (\sigma\mu - \sigma_0 \mu_0) r.
$$
Hence, \eqref{eqn LE=0 rewritten} implies
$$
\nabla_\zeta \times Q + \nabla_\zeta \nabla_\zeta \cdot a + \nabla_\zeta(\nabla\log\mu \cdot a) - \nabla\log\sigma\times Q  - i\omega (\sigma\mu - \sigma_0 \mu_0) r = - f.
$$
Applying $\nabla_\zeta\times$ to it, we get
$$
\nabla_\zeta \times \nabla_\zeta \times Q - \nabla_\zeta\times (\nabla\log\sigma\times Q) - i\omega \nabla(\sigma \mu) \times r - i\omega (\sigma \mu - \sigma_0 \mu_0) Q = - \nabla_\zeta\times f
$$
as
$$
\nabla_\zeta\times (\nabla_\zeta \nabla_\zeta \cdot a) = 0\quad\text{and}\quad \nabla_\zeta\times (\nabla_\zeta(\nabla\log\mu \cdot a) )= 0.
$$
Using
the fact that $\nabla_\zeta \cdot Q = 0$, we can write
$$
\nabla_\zeta\times (\nabla\log\sigma\times Q) = \nabla\nabla(\log\sigma) Q - (\Delta\log\sigma)Q - \nabla\log\sigma \cdot \nabla_\zeta Q.
$$
Thus, we come to
\begin{equation}\label{final eqn for Q}
-\Delta_\zeta Q - \nabla\log\sigma \cdot \nabla_\zeta Q - V_2 Q \\
= i\omega \nabla(\sigma\mu)\times r - \nabla_\zeta \times f,
\end{equation}
where
$$
V_2:= \nabla\nabla(\log\sigma) + i\omega(\sigma\mu - \sigma_0 \mu_0) - \Delta\log\sigma.
$$
According to the discussion in Section~\ref{section on L^2 norm of weighted curl of f}, for sufficiently large $\tau$, there are bounded inverses
$$
G_{\zeta, \mu}: L^2_{\delta + 1} \to L^2_\delta \quad\text{and}\quad G_{\zeta, \sigma}: L^2_{\delta + 1} \to L^2_\delta
$$
of $-\Delta_\zeta - \nabla\log\mu \cdot \nabla_\zeta$ and $-\Delta_\zeta - \nabla\log\sigma \cdot \nabla_\zeta$, respectively, satisfying
\begin{equation}\label{ineq::operator norms of G sigma}
\|G_{\zeta,\mu}\|_{L^2_{\delta + 1} ; L^2_\delta} \le \mathcal O\Big(\frac{1}{\tau}\Big)\quad\text{as}\quad\tau \to \infty\end{equation}
and
\begin{equation}\label{ineq::operator norms of G mu}
\|G_{\zeta,\sigma}\|_{L^2_{\delta + 1} ; L^2_\delta} \le \mathcal O\Big(\frac{1}{\tau}\Big)\quad\text{as}\quad\tau \to \infty.\end{equation}
Furthermore, $G_{\zeta,\mu}$ and $G_{\zeta,\sigma}$ map the space $L^2_{\delta + 1}$ into $H^1_\delta$.\smallskip

If $\tau$ is large enough, we apply $G_{\zeta, \sigma}$ to \eqref{final eqn for Q} and obtain the following identity
\begin{equation}\label{int identity for Q}
(\id - G_{\zeta, \sigma}V_2) Q = G_{\zeta, \sigma}\big\{i\omega\nabla(\sigma\mu)\times r - \nabla_\zeta \times f \big\}.
\end{equation}
Since the support of $V_2$ is compact in $\R^3$, the operator $\id - G_{\zeta, \sigma}V_2$ is invertible in $L^2_\delta$ for large enough $\tau$. Also, if $r\in L^2_
\delta$, then $\nabla(\sigma\mu)\times r\in L^2_{\delta+1}$ by \eqref{sigma and mu are constants outside of a compact set}. Thus, one can solve the above identity for $Q\in L^2_\delta$,
\begin{equation}\label{solution for Q}
Q = (\id - G_{\zeta, \sigma} V_2)^{-1} G_{\zeta, \sigma}\big\{i\omega\nabla(\sigma\mu)\times r - \nabla_\zeta \times f \big\}.
\end{equation}
Substituting this solution into \eqref{pre final pde for r}, we obtain
\begin{equation}\label{final integro pde for r}
-\Delta_\zeta r - \nabla\log\mu \cdot \nabla_\zeta r - i\omega W r - V_1 r = - F,
\end{equation}
where
\begin{align*}
W &:=  \nabla\log(\sigma\mu) \times (\id - G_{\zeta, \sigma} V_1)^{-1}\circ G_{\zeta, \sigma}\circ \nabla(\sigma\mu)\times\ ,\\
F &:= f + \nabla\log(\sigma\mu) \times (\id - G_{\zeta, \sigma} V_1)^{-1}\circ G_{\zeta, \sigma}\circ \nabla_\zeta \times f.
\end{align*}
It follows from \eqref{ineq::operator norms of G mu} and \eqref{sigma and mu are constants outside of a compact set} that
\begin{equation}\label{operator norm of W}
\|W\|_{L^2_{\delta} ; L^2_{\delta}} = \mathcal O\Big(\frac{1}{\tau}\Big)\quad\text{as}\quad\tau \to \infty.
\end{equation}
and
$$
 \|F\|_{L^2_{\delta+1}} \le  \|f\|_{L^2_{\delta+1}} + \mathcal O\Big(\frac{1}{\tau}\Big)\| \nabla_\zeta \times f\|_{L^2_{\delta+1}} = o(\tau)\quad\text{as}\quad\tau \to \infty.
$$
Applying $G_{\zeta, \mu}$ to \eqref{final integro pde for r}, we get
\begin{equation}\label{int identity for r}
(\id - i\omega G_{\zeta, \mu} W - G_{\zeta, \mu} V_1) r = - G_{\zeta, \mu} F.
\end{equation}
Since $V_1$ is compactly supported and $W$ satisfies \eqref{operator norm of W}, the operator $\id - i\omega G_{\zeta, \mu} W - G_{\zeta, \mu} V_1$ is invertible in $L^2_\delta$ for $\tau$ sufficiently large. Therefore, one can solve the above identity for $r\in L^2_\delta$ by
$$
r = - (\id - i\omega G_{\zeta, \mu} W - G_{\zeta, \mu} V_1)^{-1} G_{\zeta, \mu}F.
$$
Finally, by \eqref{ineq::operator norms of G mu} and \eqref{solution for Q}, we can show that
$$
\|r\|_{L^2_{\delta}} \le \mathcal O\Big(\frac{1}{\tau}\Big)\|F\|_{L^2_{\delta+1}} = o(1)\quad\text{as}\quad\tau \to \infty
$$
and
$$
\|\nabla_\zeta\times r\|_{L^2_{\delta}} \le \mathcal O\Big(\frac{1}{\tau}\Big) \Big\{\|r\|_{L^2_{\delta}} + \|\nabla_\zeta\times f\|_{L^2_{\delta+1}}\Big\} = o(\tau)\quad\text{as}\quad\tau \to \infty. 
$$
It follows from \eqref{int identity for Q} and \eqref{int identity for r} that
\begin{align*}
r &= i\omega G_{\zeta, \mu} (Wr) + G_{\zeta, \mu} (V_1 r) - G_{\zeta, \mu} F,\\
Q &= G_{\zeta, \sigma}(V_2 Q) + G_{\zeta, \sigma}\big\{i\omega\nabla(\sigma\mu)\times r - \nabla_\zeta \times f \big\}.
\end{align*}
Since $V_1$, $V_2$ and $W$ are compactly supported and $G_{\zeta,\mu}$ and $G_{\zeta,\sigma}$ map the space $L^2_{\delta + 1}$ into $H^1_\delta$, this implies that $r,\nabla_\zeta\times r \in H^1_\delta$. Thus, we have constructed the following complex geometric optics solution for \eqref{eqn3-1}
$$
H(x;\zeta) = e^{i\zeta\cdot x}\big\{e^{-\alpha^\sharp(x;\tau)/2}\rho + s_0 b e^{\alpha^\sharp(x;\tau)/2} e^{\Psi^\sharp(x;\rho)}\overline\rho + r(x;\zeta)\big\}.
$$
Our next step is to show that $\nabla\cdot(\mu H)=0$.
We observe that \eqref{eqn3-1} is equivalent to
$$
\nabla\times(\sigma^{-1}\nabla\times H) - \sigma^{-1}\nabla(\mu^{-1}\nabla\cdot(\mu H)) - i\omega\mu H = 0.
$$
Applying the divergence to this identity and setting $v = \mu^{-1}\nabla\cdot(\mu H)$, we get
$$
- \nabla\cdot(\sigma^{-1}\nabla v) - i\omega\mu v = 0
$$
which can be written as
$$
- \Delta \tilde v + \tilde q \tilde v = 0,\quad \tilde v := \sigma^{-1/2} v,\quad \tilde q := \sigma^{1/2} \Delta \sigma^{-1/2} - i\omega \sigma \mu.
$$
Straightforward calculations give
$$
\tilde v = e^{i\zeta\cdot x} u,\quad u:= \sigma^{-1/2}\mu^{-1}\nabla_\zeta\cdot\big(\sigma(a+r)\big).
$$
Then, by \eqref{eqn:: conj grad and laplacian}, $u$ satisfies
$$
-\Delta_\zeta u= q u,\quad q := - \sigma^{1/2} \Delta \sigma^{-1/2} + i\omega ( \sigma \mu - \sigma_0 \mu_0 ).
$$
By \cite[Theorem~1.6]{sylvester1987global}, again, there is a bounded inverse, $G_\zeta : L^2_{\delta+1} \to L^2_{\delta}$ of $-\Delta_\zeta$ such that $\|G_\zeta\|_{L^2_{\delta+1} ; L^2_{\delta}} \le \mathcal O(|\zeta|^{-1})$ as $|\zeta|\to\infty$. Since $u = G_\zeta (qu)$ for large enough $|\zeta|$ and $q$ is compactly supported,
$$
\|u\|_{L^2_{\delta}} \le \|G_\zeta(qu)\|_{L^2_{\delta}} \le \mathcal O\Big(\frac{1}{|\zeta|}\Big)\|qu\|_{L^2_{\delta+1}} \le \mathcal O\Big(\frac{1}{|\zeta|}\Big)\|u\|_{L^2_{\delta}}\quad\text{as}\quad |\zeta|\to\infty.
$$
This implies that $u=0$ and hence $\nabla\cdot(\mu H)=0$ for sufficiently large $|\zeta|$. 
Thus, restricting $H$ onto $\Omega$, we get $H\in H^1(\Omega;\C^3)$ solving \eqref{eqn::curl-curl equation} and satisfying $\nabla\times H\in H^1(\Omega;\C^3)$. Clearly, $\nu\times H|_{\p\Omega}\in H^{1/2}(\p\Omega;\C^3)$. Then by \cite[Corollary~A.20.]{kirsch2016mathematical},
$$
\Div(\nu\times H|_{\p\Omega}) = - \nu\cdot (\nabla\times H)|_{\p\Omega} \in H^{1/2}(\p\Omega;\C^3).
$$
Therefore, $\nu\times H|_{\p\Omega}\in TH^{1/2}_{\Div}(\p\Omega)$ and hence $H\in H^1_{\Div}(\Omega)$. In a similar way, also using the fact that $H$ is a solution of \eqref{eqn::curl-curl equation}, one can easily show that $\nu\times (\nabla\times H)|_{\p\Omega} \in TH^{1/2}_{\Div}(\p\Omega)$ and hence $\nabla \times H\in H^1_{\Div}(\Omega)$.
Thus, we proved

\begin{Proposition}\label{prop CGO}
Let $\Omega\subset \R^3$ be an open bounded set with $C^{1,1}$ boundary and $\sigma,\mu\in C^2(\overline\Omega)$ with $\sigma \ge \sigma_0$, $\mu \ge \mu_0$ for some $\sigma_0,\mu_0>0$. Assume that $\sigma$ and $\mu$ can be extended positively to $\R^3$ so that $\sigma - \sigma_0, \mu - \mu_0 \in C^2_0(\R^3)$. Let $\zeta\in \C^3$ be such that $\zeta\cdot\zeta=i\omega\sigma_0\mu_0$, $\zeta = \tau \rho + \zeta_1$ where $\tau > 0$ is a large parameter, $\rho \in \C^3$ is independent of $\tau$ and satisfies $\Re\rho \cdot \Im\rho = 0$ and $|\Re\rho| = |\Im\rho| = 1$, and $\zeta_1 = \mathcal O(1)$ as $\tau \to \infty$. Then, for any $s_0 \in \R$, there is a solution $H\in H^1_{\Div}(\Omega)$ for \eqref{eqn::curl-curl equation} of the form
$$
H(x;\zeta) = e^{i\zeta\cdot x}\big\{e^{-\alpha^\sharp(x;\tau)/2}\rho + s_0 b e^{\alpha^\sharp(x;\tau)/2} e^{\Psi^\sharp(x;\rho)}\overline\rho + r(x;\zeta)\big\}.
$$
Furthermore, $\nabla\times H\in H^1_{\Div}(\Omega)$. The function
$\Psi^\sharp(\cdot, \rho; \tau)\in C^\infty(\R^3)$ satisfies
$\|\Psi^\sharp\|_{W^{1,\infty}(\R^3)} = \mathcal O(1)$ as
$\tau\to\infty$ and converges to $\Psi(\cdot, \rho) := - N_\rho^{-1}
\{\overline{\rho} \cdot \nabla\log(\sigma\mu)^{1/2}\} \in
L^\infty(\R^3)$ in $L^2_{\rm loc}(\R^3)$ as $\tau\to\infty$. The
function $\alpha^\sharp(\,\cdot\,; \tau)\in C^\infty(\R^3)$ satisfies
$\| \alpha^\sharp \|_{W^{2,\infty}(\R^3)} = \mathcal O(1)$ as
$\tau\to\infty$ and $\| \alpha^\sharp - \log\sigma
\|_{W^{2,\infty}(\R^3)} = o(1)$ as $\tau\to\infty$. The correction
term, $r$, satisfies $\|r\|_{L^2(\Omega;\C^3)} = o(1)$ and
$\|\nabla_\zeta\times r\|_{L^2(\Omega;\C^3)} = o(\tau)$ as
$\tau\to\infty$.
\end{Proposition}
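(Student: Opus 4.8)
The plan is to verify that the explicit $H$ in the statement, built from the amplitude $a$ constructed in the transport-equation subsection together with the correction term $r$, solves the curl-curl equation \eqref{eqn::curl-curl equation R^3}, and then to read off all the quantitative claims from the estimates already assembled above. Since \eqref{eqn::curl-curl equation R^3} is equivalent to the pair \eqref{eqn3-1}--\eqref{eqn3-2}, and the complex geometric optics ansatz turns these into \eqref{eqn LE=0 rewritten} and \eqref{eqn div(sigma E)=0 rewritten} respectively, it suffices to produce an $r\in L^2_\delta$ solving both and to establish the advertised regularity. First I would record that, for the chosen $a$, the residual $f$ satisfies $\|f\|_{L^2_{\delta+1}} = o(\tau)$ and $\|\nabla_\zeta\times f\|_{L^2_{\delta+1}} = o(\tau^2)$, exactly the source bounds derived in Sections~\ref{section on L^2 norm of f} and \ref{section on L^2 norm of weighted curl of f}.

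The principal difficulty, and the reason this is not a routine perturbation, is the first-order term $\nabla\log\sigma\times\nabla_\zeta\times r$ in \eqref{pre final pde for r}, which cannot be absorbed into the resolvent $G_{\zeta,\mu}$. The remedy is to introduce the auxiliary field $Q:=\nabla_\zeta\times r$, apply $\nabla_\zeta\times$ to the equation, and use $\nabla_\zeta\cdot Q=0$ together with the identity for $\nabla_\zeta\times(\nabla\log\sigma\times Q)$ to reach \eqref{final eqn for Q}, whose principal part is $-\Delta_\zeta-\nabla\log\sigma\cdot\nabla_\zeta$ with only the zeroth-order remainder $V_2$. Inverting through $G_{\zeta,\sigma}$ and using that $V_2$ is compactly supported so that $\id - G_{\zeta,\sigma}V_2$ is invertible for large $\tau$ (by the $\mathcal O(1/\tau)$ bound of Proposition~\ref{prop unique solvability}), I would solve for $Q$ as in \eqref{solution for Q}, substitute back into \eqref{pre final pde for r}, and obtain the closed integro-differential equation \eqref{final integro pde for r} for $r$ alone. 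Applying $G_{\zeta,\mu}$ and inverting $\id - i\omega G_{\zeta,\mu}W - G_{\zeta,\mu}V_1$ (invertible for large $\tau$ since $V_1$ is compactly supported and $\|W\|_{L^2_\delta;L^2_\delta}=\mathcal O(1/\tau)$) yields $r\in L^2_\delta$ with $\|r\|_{L^2_\delta}=\mathcal O(\tau^{-1})\|F\|_{L^2_{\delta+1}}=o(1)$, while the curl bound $\|\nabla_\zeta\times r\|_{L^2_\delta}=o(\tau)$ follows from $\|\nabla_\zeta\times f\|_{L^2_{\delta+1}}=o(\tau^2)$.

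Next I would establish $\nabla\cdot(\mu H)=0$, securing \eqref{eqn3-2}. Setting $v:=\mu^{-1}\nabla\cdot(\mu H)$ and using the form of \eqref{eqn3-1} that isolates the gradient term, one finds $-\nabla\cdot(\sigma^{-1}\nabla v)-i\omega\mu v=0$; the substitution $\tilde v=\sigma^{-1/2}v=e^{i\zeta\cdot x}u$ conjugates this to $-\Delta_\zeta u=qu$ with $q$ compactly supported. Since $u=G_\zeta(qu)$ and $\|G_\zeta\|_{L^2_{\delta+1};L^2_\delta}=\mathcal O(|\zeta|^{-1})$ by \cite[Theorem~1.6]{sylvester1987global}, the inequality $\|u\|_{L^2_\delta}\le\mathcal O(|\zeta|^{-1})\|u\|_{L^2_\delta}$ forces $u=0$ for large $|\zeta|$, hence $\nabla\cdot(\mu H)=0$. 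With both \eqref{eqn3-1} and \eqref{eqn3-2} in hand, $H$ solves \eqref{eqn::curl-curl equation} on $\Omega$.

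Finally, for the regularity and the listed asymptotics I would argue as follows. Because $G_{\zeta,\mu}$ and $G_{\zeta,\sigma}$ map $L^2_{\delta+1}$ into $H^1_\delta$ and $V_1,V_2,W$ are compactly supported, the fixed-point formulas give $r,\nabla_\zeta\times r\in H^1_\delta$; restricting to $\Omega$ yields $H\in H^1(\Omega;\C^3)$ with $\nabla\times H\in H^1(\Omega;\C^3)$, and the identity $\Div(\nu\times H|_{\p\Omega})=-\nu\cdot(\nabla\times H)|_{\p\Omega}\in H^{1/2}(\p\Omega)$ from \cite[Corollary~A.20]{kirsch2016mathematical} places $\nu\times H|_{\p\Omega}$ in $TH^{1/2}_{\Div}(\p\Omega)$, so $H\in H^1_{\Div}(\Omega)$; the same reasoning applied to $\nabla\times H$, using that $H$ solves \eqref{eqn::curl-curl equation}, gives $\nabla\times H\in H^1_{\Div}(\Omega)$. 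The stated properties of $\Psi^\sharp$ and $\alpha^\sharp$ are precisely \eqref{estimate W^1,infty of Psi^sharp}, its $L^2_{\rm loc}$ convergence, and \eqref{L^infty norms of derivatives of alpha and beta errors}--\eqref{eqn::W^2,infty norms of alpha & beta sharps}. I expect the genuine obstacle to lie not in this assembly but in the curl estimate $\|\nabla_\zeta\times f\|_{L^2_{\delta+1}}=o(\tau^2)$, which forces the third-order amplitude bounds $\|\p_l\p_j\p_k a\|_{L^2_{\delta+1}}=o(\tau^{1+\epsilon})$ and hence the delicate balancing of the mollification scale $\tau^{-\epsilon}$ against the cutoff scale $\tau^{\theta}$.
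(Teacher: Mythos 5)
Your proposal follows essentially the same route as the paper's own proof: the reduction to \eqref{eqn LE=0 rewritten}--\eqref{eqn div(sigma E)=0 rewritten}, the source bounds $\|f\|_{L^2_{\delta+1}}=o(\tau)$ and $\|\nabla_\zeta\times f\|_{L^2_{\delta+1}}=o(\tau^2)$, the auxiliary field $Q=\nabla_\zeta\times r$ with the resolvents $G_{\zeta,\sigma}$ and $G_{\zeta,\mu}$ and the two Neumann-series inversions, the conjugated Schr\"odinger argument forcing $\nabla\cdot(\mu H)=0$, and the trace regularity step via \cite[Corollary~A.20]{kirsch2016mathematical} are exactly the paper's steps with the same estimates. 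The only slip is cosmetic: the first-order term in \eqref{pre final pde for r} is $\nabla\log(\sigma\mu)\times\nabla_\zeta\times r$ rather than $\nabla\log\sigma\times\nabla_\zeta\times r$, which does not affect the argument.
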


\section{Proof of Theorem~\ref{main thm}}\label{section::proof of main thm}

Since we assume that $\p^\alpha \sigma_1|_{\p\Omega} = \p^\alpha
\sigma_2|_{\p\Omega}$ and $\p^\alpha \mu_1|_{\p\Omega} = \p^\alpha
\mu_2|_{\p\Omega}$ for $|\alpha| \le 2$, we can extend $\sigma_j$ and
$\mu_j$, $j=1,2$, to $C^2$ functions defined on $\R^3$, still denoted
by $\sigma_j$ and $\mu_j$, such that $\sigma_j \ge \sigma_0$, $\mu_j
\ge \mu_0$ on $\R^3$, $\sigma_j - \sigma_0, \mu_j - \mu_0 \in
C^2_0(\R^3)$ and $\sigma_1=\sigma_2$ and $\mu_1=\mu_2$ on
$\R^3\setminus\overline\Omega$. These kind of extensions (of Whitney
type) hold for all functions defined on any closed subset of $\R^3$
that can be approximated by certain polynomials. The argument to prove
the existence of such polynomials is similar to the one in
\cite[Section~2]{caro2013stability} for $C^{1,\varepsilon}$ functions
on $\overline\Omega$. The only difference, here, is that the authors
of \cite{caro2013stability} refer to \cite[Section~2 of
  Chapter~VI]{stein1970singular}, while we refer to \cite[Section~4.7
  of Chapter~VI]{stein1970singular}; see also
\cite[Section~3]{caro2014global}.

\begin{Proposition}
Let $\Omega\subset \R^3$ be a bounded domain with $C^{1,1}$ boundary and $\sigma_j,\mu_j\in C^2(\overline\Omega)$, $j=1,2$, with $\sigma_j \ge \sigma_0$, $\mu_j \ge \mu_0$ for some $\sigma_0,\mu_0>0$. Suppose that $Z_{\sigma_1,\mu_1}^\omega = Z_{\sigma_2,\mu_2}^\omega$; then
\begin{equation}\label{main integral identity}
\int_\Omega (\mu_1 - \mu_2) H_1\cdot H_2\,dx + \frac{1}{i\omega} \int_\Omega \frac{(\sigma_1 - \sigma_2)}{\sigma_1\sigma_2} \nabla\times H_1\cdot \nabla\times H_2\,dx = 0
\end{equation}
for all $H_j\in H_{\Div}^1(\Omega)$ with $\nabla\times H_j\in H_{\Div}^1(\Omega)$ solving
$$
\nabla\times(\sigma_j^{-1}\nabla\times H_j) - i\omega \mu_j H_j = 0\quad\text{in}\quad\Omega,\quad j=1,2.
$$
\end{Proposition}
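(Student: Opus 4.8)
The plan is to recover the first-order Maxwell system and apply a vector Green's identity to the two media symmetrically, concentrating the entire difficulty in a single boundary term that the hypothesis $Z_{\sigma_1,\mu_1}^\omega = Z_{\sigma_2,\mu_2}^\omega$ forces to vanish. First I would set $E_j := \sigma_j^{-1}\nabla\times H_j$ for $j=1,2$, so that each pair $(E_j,H_j)$ solves $\nabla\times E_j = i\omega\mu_j H_j$ and $\nabla\times H_j = \sigma_j E_j$ in $\Omega$. The hypothesis $\nabla\times H_j \in H^1_{\Div}(\Omega)$ together with $\sigma_j\in C^2(\overline\Omega)$ guarantees $E_j\in H^1(\Omega;\C^3)$, so all the traces below are well defined.

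Next I would invoke the integration-by-parts identity
\[
   \int_\Omega (\nabla\times A)\cdot B - A\cdot(\nabla\times B)\, dx
   = \int_{\p\Omega}(\nu\times A)\cdot B\, dS ,
\]
valid for $A,B\in H^1(\Omega;\C^3)$. Applying it with $(A,B)=(E_1,H_2)$ and then with $(A,B)=(E_2,H_1)$, substituting $\nabla\times E_j = i\omega\mu_j H_j$ and $\nabla\times H_j = \sigma_j E_j$, and subtracting the two resulting equations yields
\[
   i\omega\int_\Omega(\mu_1-\mu_2)H_1\cdot H_2\, dx
   - \int_\Omega(\sigma_2-\sigma_1)E_1\cdot E_2\, dx
   = \int_{\p\Omega}(\nu\times E_1)\cdot H_2\, dS
   - \int_{\p\Omega}(\nu\times E_2)\cdot H_1\, dS .
\]
Rewriting $E_1\cdot E_2 = (\sigma_1\sigma_2)^{-1}\nabla\times H_1\cdot\nabla\times H_2$ turns the left-hand side into $i\omega$ times the desired left-hand side of \eqref{main integral identity}, so it remains only to show that the boundary term on the right is zero.

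For the boundary term I would pass to tangential traces. Since $\nu\times E_j$ is tangential, only the tangential part of $H_k$ contributes, and that part equals $-\nu\times\bt(H_k)$; hence $(\nu\times E_j)\cdot H_k = -\bt(E_j)\cdot(\nu\times\bt(H_k))$ on $\p\Omega$. Writing $f_j := \bt(H_j)$, $\bt(E_j) = Z_{\sigma_j,\mu_j}^\omega(f_j)$, and introducing the boundary pairing $\langle g,f\rangle := \int_{\p\Omega} g\cdot(\nu\times f)\, dS$, the boundary term becomes $-\langle Z_{\sigma_1,\mu_1}^\omega f_1, f_2\rangle + \langle Z_{\sigma_2,\mu_2}^\omega f_2, f_1\rangle$. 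The key auxiliary fact is the symmetry $\langle Z_{\sigma,\mu}^\omega f, h\rangle = \langle Z_{\sigma,\mu}^\omega h, f\rangle$: this is exactly the Green's identity above applied to two solutions of one and the same system, for which the volume contributions cancel identically. Combining this symmetry with $Z_{\sigma_1,\mu_1}^\omega = Z_{\sigma_2,\mu_2}^\omega$ makes the two pairings coincide, so the boundary term vanishes and, after dividing by $i\omega$, we obtain \eqref{main integral identity}.

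The step I expect to require the most care is the justification at the level of the trace spaces: one must verify that $H_j,\nabla\times H_j\in H^1_{\Div}(\Omega)$ places all of $\bt(H_j),\bt(E_j)$ in $TH^{1/2}_{\Div}(\p\Omega)$, so that the pairing $\langle\cdot,\cdot\rangle$ and the integration by parts are legitimate, and that the identification of the tangential component of $H_k$ with $-\nu\times\bt(H_k)$ holds in the appropriate Sobolev sense. Once these functional-analytic points are in place, the algebraic manipulations and the symmetry of the impedance map are routine consequences of the same Green's identity.
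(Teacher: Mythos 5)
Your proof is correct, but it is organized differently from the paper's. The paper uses the hypothesis $Z_{\sigma_1,\mu_1}^\omega = Z_{\sigma_2,\mu_2}^\omega$ at the outset to produce an auxiliary solution $(H',E')$ of the $(\sigma_2,\mu_2)$ system with $\bt(H')=\bt(H_1)$ and $\bt(E')=\bt(E_1)$, and then integrates by parts against $(E_2,H_2)$; the boundary terms vanish outright because the differences $H'-H_1$ and $E'-E_1$ have zero tangential trace, and a short algebraic computation yields \eqref{main integral identity}. You instead integrate by parts symmetrically with the given fields $(E_1,H_2)$ and $(E_2,H_1)$, push everything onto the boundary, and kill the boundary pairing by combining the hypothesis with a reciprocity lemma $\langle Z^\omega_{\sigma,\mu}f,h\rangle=\langle Z^\omega_{\sigma,\mu}h,f\rangle$ for a single medium. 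The two routes have the same mathematical content: your reciprocity lemma, once unwound, is proved by the same Green's identity and requires solving the $(\sigma_2,\mu_2)$ problem with boundary data $f_1=\bt(H_1)$ --- exactly the auxiliary solution the paper constructs explicitly; so, like the paper, you implicitly use that $\omega$ is non-resonant for $(\sigma_2,\mu_2)$, which is what makes $Z_{\sigma_2,\mu_2}^\omega$ everywhere defined. What your organization buys is modularity: the self-adjointness of the impedance map with respect to the pairing $\langle g,f\rangle=\int_{\p\Omega}g\cdot(\nu\times f)\,dS$ is a reusable structural fact, and the passage from it to the integral identity is purely formal. What the paper's organization buys is directness, and it adapts more readily to the partial-data analogue (Proposition~\ref{prop main integral identity local}), where traces are matched only on part of the boundary and the boundary terms are killed by support considerations rather than by a global symmetry of the impedance map. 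Your closing functional-analytic checks are the right ones and do go through: $H_j\in H^1_{\Div}(\Omega)$ and $\nabla\times H_j\in H^1_{\Div}(\Omega)$ give $E_j=\sigma_j^{-1}\nabla\times H_j\in H^1(\Omega;\C^3)$ with $\bt(E_j)\in TH^{1/2}_{\Div}(\p\Omega)$, so the Green identity and the trace identifications are all legitimate.
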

\begin{proof}
Define
$$
E_j := \sigma_j^{-1} \nabla\times H_j,\quad j=1,2.
$$
Then $E_j\in H_{\Div}^1(\Omega)$ and $\nabla\times E_j = i\omega\mu_j H_j$. Hence $(H_j, E_j) \in H^1_{\Div}(\Omega) \times H^1_{\Div}(\Omega)$, $j=1,2$, solve
$$
\nabla\times E_j=i\omega\mu_j H_j\quad\text{and}\quad \nabla\times H_j=\sigma_j E_j \quad\text{in}\quad\Omega,\quad j=1,2.
$$
Then the assumption $Z_{\sigma_1,\mu_1}^\omega = Z_{\sigma_2,\mu_2}^\omega$ implies existence of $(H', E')\in H^1_{\Div}(\Omega) \times H^1_{\Div}(\Omega)$ satisfying
$$
\nabla\times E'=i\omega\mu_2 H'\quad\text{and}\quad \nabla\times H'=\sigma_2 E'\quad\text{in}\quad\Omega
$$
and
$$
\bt(H') = \bt(H_1)\quad\text{and}\quad \bt(E') = \bt(E_1)\quad\text{on}\quad\p \Omega.
$$
Integrating by parts,
\begin{align*}
\int_\Omega \nabla\times(H'-H_1)\cdot E_2\,dx - \int_\Omega i\omega\mu_2 (H'-H_1)\cdot H_2\,dx&= \int_\Omega \nabla\times(H'-H_1)\cdot E_2\,dx - \int_\Omega (H'-H_1)\cdot \nabla\times E_2\,dx\\
&= \int_{\p\Omega} \bt (H'-H_1)\cdot E_2\,dS(x) = 0,
\end{align*}
where $dS$ is the surface measure on $\p \Omega$. Similarly,
$$
\int_\Omega \nabla\times(E'-E_1)\cdot H_2\,dx - \int_\Omega \sigma_2 (E'-E_1)\cdot E_2\,dx = 0.
$$
Adding these two identities, we obtain
$$
\int_\Omega [\nabla\times(H'-H_1) - \sigma_2 (E'-E_1)]\cdot E_2\,dx + \int_\Omega [\nabla\times(E'-E_1) - i\omega\mu_2 (H'-H_1)]\cdot H_2\,dx = 0.
$$
It is easy to show that
$$
\nabla\times(H'-H_1) - \sigma_2 (E'-E_1) = (\sigma_2 - \sigma_1) E_1,\quad \nabla\times(E'-E_1) - i\omega\mu_2 (H'-H_1) = i\omega(\mu_2 - \mu_1) H_1.
$$
Substituting these into the latter integral identity, we come to
$$
\int_\Omega (\sigma_2 - \sigma_1) E_1\cdot E_2\,dx + \int_\Omega i\omega(\mu_2 - \mu_1) H_1\cdot H_2\,dx = 0.
$$
This implies \eqref{main integral identity}.
\end{proof}

Let $\xi,\rho_1,\rho_2\in \R^3$ be such that $|\rho_1| = |\rho_2| = 1$ and $\rho_1\cdot\rho_2 = \rho_1 \cdot \xi = \rho_2 \cdot \xi =0$. Consider
\begin{align*}
\zeta^1 &= \phantom{-}\frac{\xi}{2} + i \tau \rho_2 + \tau \sqrt{1 - \frac{|\xi|^2}{4\tau^2} + \frac{i\omega\sigma_0\mu_0}{\tau^2}} \rho_1,\\
\zeta^2 &= -\frac{\xi}{2} + i \tau \rho_2 + \tau \sqrt{1 - \frac{|\xi|^2}{4\tau^2} + \frac{i\omega\sigma_0\mu_0}{\tau^2}} \rho_1.
\end{align*}
Here, by $\sqrt{\, \cdot \,}$ we mean its principal branch. Then
$$
  \zeta^j = \tau \rho + \zeta^j_1\quad\text{with}\quad \zeta^j_1 = \mathcal O(1)\quad\text{as}\ \tau \to \infty\quad\text{and}\quad
  \zeta^1 - \zeta^2 = \xi,\quad \zeta^j\cdot\zeta^j = i\omega\sigma_0\mu_0,
$$
where $\rho:=\rho_1 + i\rho_2$. By Proposition~\ref{prop CGO}, there are complex geometric optics solutions $H_1,H_2\in H^1_{\Div}(\Omega)$, with $\nabla\times H_1, \nabla\times H_2 \in H^1_{\Div}(\Omega)$ satisfying
$$
\nabla\times(\sigma_1^{-1}\nabla\times H_1) - i\omega \mu_1 H_1 = 0\quad\text{and}\quad \nabla\times(\sigma_2^{-1}\nabla\times H_2) - i\omega \mu_2 H_2 = 0 \quad\text{in}\quad\Omega,
$$
respectively, which have the following forms
$$
H_1(x;\zeta^1) = e^{i\zeta^1\cdot x} \Big(a_1 \rho + \frac{1}{2} b_1 \overline\rho + r_1\Big),\quad H_2(x;\zeta^2) = e^{-i\zeta^2\cdot x} \Big(-a_2 \rho - \frac{1}{2} b_2 \overline\rho + r_2\Big),
$$
where
\begin{align*}
a_1 &= e^{-\alpha_1^\sharp(x;\tau)/2},\quad b_1 = e^{\alpha_1^\sharp(x;\tau)/2} e^{\Psi^\sharp_1(x, \rho; \tau)},\\
a_2 &= e^{-\alpha_2^\sharp(x;\tau)/2},\quad b_2 = e^{\alpha_2^\sharp(x;\tau)/2} e^{\Psi^\sharp_2(x, \rho; \tau)}.
\end{align*}
The functions $\Psi_1^\sharp(\cdot, \rho; \tau),\Psi_2^\sharp(\cdot, \rho; \tau)\in C^\infty(\R^3)$ satisfy
\begin{align}
\|\Psi_1^\sharp\|_{W^{1,\infty}(\R^3)}&,\ \|\Psi_2^\sharp\|_{W^{1,\infty}(\R^3)} = \mathcal O(1) \quad \text{as}\quad\tau\to\infty,\label{eqn 4.2}\\
\|\Psi_1^\sharp - \Psi_1\|_{L^2_{\rm loc}(\R^3)}&,\ \|\Psi_2^\sharp - \Psi_2\|_{L^2_{\rm loc}(\R^3)} = o(1) \quad\text{as}\quad\tau\to\infty,\label{eqn 4.3}
\end{align}
where
$$
\Psi_j(\cdot, \rho) := - N_\rho^{-1} \{\overline{\rho} \cdot \nabla\log(\sigma_j\mu_j)^{1/2}\} \in L^\infty(\R^3),\quad j=1,2.
$$
Furthermore, the functions $\alpha_1^\sharp(\,\cdot\,;\tau),\ \alpha_2^\sharp(\,\cdot\,;\tau) \in C^\infty(\R^3)$ satisfy
\begin{align}
\| \alpha_1^\sharp \|_{W^{2,\infty}(\R^3)}&,\ \| \alpha_2^\sharp \|_{W^{2,\infty}(\R^3)} = \mathcal O(1) \quad\text{as}\quad\tau\to\infty,\label{eqn 4.4}\\
\| \alpha_1^\sharp - \log\sigma_1 \|_{W^{2,\infty}(\R^3)}&,\ \| \alpha_2^\sharp - \log\sigma_2 \|_{W^{2,\infty}(\R^3)} = o(1) \quad\text{as}\quad\tau\to\infty.\label{eqn 4.5}
\end{align}
The correction terms, $r_1$, $r_2 \in H_{\Div}^1(\Omega)$, satisfy
\begin{equation}\label{eqn 4.6}
\|r_j\|_{L^2(\Omega)} = o(1)\quad\text{and}\quad\|\nabla_{\zeta^j}\times r_j\|_{L^2(\Omega)} = o(\tau) \quad \text{as}\quad \tau\to\infty,\quad j=1,2.
\end{equation}
Then, using that $\zeta^j = \tau\rho + \zeta^j_1$, we find that
\begin{align*}
\nabla\times E_1 &= e^{i\zeta^1\cdot x}\,\nabla_{\zeta^1}\times\Big(a_1 \rho + \frac{1}{2} b_1 \overline\rho + r_1\Big)
= e^{i\zeta^1\cdot x} \Big(\nabla_{\zeta^1_1}a_1\times \rho+\frac{1}{2}\nabla_{\zeta^1_1}b_1\times \overline\rho + b_1 \tau \rho_1\times \rho_2 + \nabla_{\zeta^1}\times r_1\Big),\\
\nabla\times E_2 &= e^{-i\zeta^2\cdot x} \,\nabla_{-\zeta^2}\times\Big(- a_2 \rho - \frac{1}{2} b_2 \overline\rho + r_2\Big)
= e^{-i\zeta^2\cdot x} \Big(- \nabla_{-\zeta^2_1}a_2\times \rho - \frac{1}{2}\nabla_{-\zeta^2_1}b_2\times \overline\rho + b_2 \tau \rho_1\times \rho_2 + \nabla_{-\zeta^2}\times r_2\Big).
\end{align*}
Substituting $H_1$, $H_2$, $\nabla\times H_1$ and $\nabla\times H_2$ into \eqref{main integral identity} and dividing the whole identity by $\tau^2$, we obtain
\begin{equation}\label{integral identity to find mu}
\begin{aligned}
\frac{1}{\tau^2}\int_\Omega &(\mu_1 - \mu_2) e^{i\xi\cdot x}\Big(a_1\rho + \frac{1}{2} b_1 \overline\rho + r_1\Big)\cdot \Big(- a_2\rho - \frac{1}{2} b_2 \overline\rho + r_2\Big)\,dx\\
&- \frac{1}{\tau^2}\int_\Omega \frac{1}{i\omega}\frac{(\sigma_1 - \sigma_2)}{\sigma_1\sigma_2} e^{i\xi\cdot x}\Big(\nabla_{\zeta^1_1}a_1\times \rho+\frac{1}{2}\nabla_{\zeta^1_1}b_1\times \overline\rho\Big)\cdot \Big(\nabla_{-\zeta^2_1}a_2\times \rho+\frac{1}{2}\nabla_{-\zeta^2_1}b_2\times \overline\rho\Big)\,dx\\
&+ \frac{1}{\tau}\int_\Omega \frac{1}{i\omega}\frac{(\sigma_1 - \sigma_2)}{\sigma_1\sigma_2} e^{i\xi\cdot x}\Big(\nabla_{\zeta^1_1}a_1\times \rho+\frac{1}{2}\nabla_{\zeta^1_1}b_1\times \overline\rho\Big)\cdot (b_2 \rho_1\times\rho_2)\,dx\\
&- \frac{1}{\tau}\int_\Omega \frac{1}{i\omega}\frac{(\sigma_1 - \sigma_2)}{\sigma_1\sigma_2} e^{i\xi\cdot x} (b_1 \rho_1\times \rho_2) \cdot \Big(\nabla_{-\zeta^2_1}a_2\times \rho+\frac{1}{2}\nabla_{-\zeta^2_1}b_2\times \overline\rho\Big)\,dx\\
&+ \frac{1}{\tau^2}\int_\Omega \frac{1}{i\omega}\frac{(\sigma_1 - \sigma_2)}{\sigma_1\sigma_2} e^{i\xi\cdot x}\Big(\nabla_{\zeta^1_1}a_1\times \rho+\frac{1}{2}\nabla_{\zeta^1_1}b_1\times \overline\rho\Big)\cdot (\nabla_{-\zeta^2}\times r_2)\,dx\\
&- \frac{1}{\tau^2}\int_\Omega \frac{1}{i\omega}\frac{(\sigma_1 - \sigma_2)}{\sigma_1\sigma_2} e^{i\xi\cdot x} (\nabla_{\zeta^1}\times r_1) \cdot \Big(\nabla_{-\zeta^2_1}a_2\times \rho+\frac{1}{2}\nabla_{-\zeta^2_1}b_2\times \overline\rho\Big)\,dx\\
&+ \frac{1}{\tau}\int_\Omega \frac{1}{i\omega}\frac{(\sigma_1 - \sigma_2)}{\sigma_1\sigma_2} e^{i\xi\cdot x} (b_1 \rho_1\times \rho_2) \cdot (\nabla_{-\zeta^2}\times r_2)\,dx\\
&+ \frac{1}{\tau}\int_\Omega \frac{1}{i\omega}\frac{(\sigma_1 - \sigma_2)}{\sigma_1\sigma_2} e^{i\xi\cdot x} (\nabla_{\zeta^1}\times r_1) \cdot (b_2 \rho_1\times\rho_2)\,dx + \int_\Omega \frac{1}{i\omega}\frac{(\sigma_1 - \sigma_2)}{\sigma_1\sigma_2} e^{i\xi\cdot x} b_1 b_2 \,dx = 0.
\end{aligned}
\end{equation}
For the last term on the left-hand side, we also used the property $|\rho_1\times\rho_2| = 1$ as $\rho_1 \cdot \rho_2 = 0$ and $|\rho_1| = |\rho_2| = 1$. By the Cauchy-Schwartz inequality,
\begin{multline*}
\Big|\int_\Omega (\mu_1 - \mu_2) e^{i\xi\cdot x}\Big(a_1\rho + \frac{1}{2} b_1 \overline\rho + r_1\Big)\cdot \Big(- a_2\rho - \frac{1}{2} b_2 \overline\rho + r_2\Big)\,dx\Big|\\
\le \mathcal O(1) \Big\|a_1\rho + \frac{1}{2} b_1 \overline\rho + r_1\Big\|_{L^2(\Omega)} \Big\|- a_2\rho - \frac{1}{2} b_2 \overline\rho + r_2\Big\|_{L^2(\Omega)}\quad\text{as}\quad \tau\to\infty.
\end{multline*}
Then, by \eqref{eqn 4.2}, \eqref{eqn 4.4} and \eqref{eqn 4.6},
$$
\Big|\int_\Omega (\mu_1 - \mu_2) e^{i\xi\cdot x}\Big(a_1\rho + \frac{1}{2} b_1 \overline\rho + r_1\Big)\cdot \Big(- a_2\rho - \frac{1}{2} b_2 \overline\rho + r_2\Big)\,dx\Big| = \mathcal O(1)\quad\text{as $\tau\to\infty$}.
$$
In a similar way, we obtain
\begin{align*}
\Big|\int_\Omega \frac{(\sigma_1 - \sigma_2)}{\sigma_1\sigma_2} e^{i\xi\cdot x}\Big(\nabla_{\zeta^1_1}a_1\times \rho+\frac{1}{2}\nabla_{\zeta^1_1}b_1\times \overline\rho\Big)\cdot \Big(\nabla_{-\zeta^2_1}a_2\times \rho+\frac{1}{2}\nabla_{-\zeta^2_1}b_2\times \overline\rho\Big)\,dx\Big| &= \mathcal O(1),\\
\Big|\int_\Omega \frac{(\sigma_1 - \sigma_2)}{\sigma_1\sigma_2} e^{i\xi\cdot x}\Big(\nabla_{\zeta^1_1}a_1\times \rho+\frac{1}{2}\nabla_{\zeta^1_1}b_1\times \overline\rho\Big)\cdot (b_2 \rho_1\times\rho_2)\,dx\Big| &= \mathcal O(1),\\
\Big|\int_\Omega \frac{(\sigma_1 - \sigma_2)}{\sigma_1\sigma_2} e^{i\xi\cdot x} (b_1 \rho_1\times \rho_2) \cdot \Big(\nabla_{-\zeta^2_1}a_2\times \rho+\frac{1}{2}\nabla_{-\zeta^2_1}b_2\times \overline\rho\Big)\,dx\Big| &= \mathcal O(1),\\
\Big|\int_\Omega \frac{(\sigma_1 - \sigma_2)}{\sigma_1\sigma_2} e^{i\xi\cdot x}\Big(\nabla_{\zeta^1_1}a_1\times \rho+\frac{1}{2}\nabla_{\zeta^1_1}b_1\times \overline\rho\Big)\cdot (\nabla_{-\zeta^2}\times r_2)\,dx\Big| &= o(\tau),\\
\Big|\int_\Omega \frac{(\sigma_1 - \sigma_2)}{\sigma_1\sigma_2} e^{i\xi\cdot x} (\nabla_{\zeta^1}\times r_1) \cdot \Big(\nabla_{-\zeta^2_1}a_2\times \rho+\frac{1}{2}\nabla_{-\zeta^2_1}b_2\times \overline\rho\Big)\,dx\Big| &= o(\tau),\\
\Big|\int_\Omega \frac{(\sigma_1 - \sigma_2)}{\sigma_1\sigma_2} e^{i\xi\cdot x} (b_1 \rho_1\times \rho_2) \cdot (\nabla_{-\zeta^2}\times r_2)\,dx\Big| &= o(\tau),\\
\Big|\int_\Omega \frac{(\sigma_1 - \sigma_2)}{\sigma_1\sigma_2} e^{i\xi\cdot x} (\nabla_{\zeta^1}\times r_1) \cdot (b_2 \rho_1\times\rho_2)\,dx\Big| &= o(\tau)
\end{align*}
as $\tau\to\infty$. Here, we used again that $\zeta^j_1 = \mathcal O(1)$ as $\tau\to\infty$, $j=1,2$. Finally, we use \eqref{eqn 4.2}-\eqref{eqn 4.5}, to show that
\begin{align*}
\Big|\int_\Omega \frac{(\sigma_1 - \sigma_2)}{\sigma_1\sigma_2} & e^{i\xi\cdot x} b_1 b_2 \, dx - \int_\Omega \frac{(\sigma_1 - \sigma_2)}{\sigma_1\sigma_2} e^{i\xi\cdot x} \sigma_1^{1/2}\sigma_2^{1/2} e^{\Psi_1+\Psi_2}\,dx\Big| \\
&\le \mathcal O(1) \big\| e^{\alpha_1^\sharp/2 + \alpha_2^\sharp/2} e^{\Psi^\sharp_1+\Psi^\sharp_2} - \sigma_1^{1/2}\sigma_2^{1/2} e^{\Psi_1+\Psi_2} \big\|_{L^2(\Omega)}\\
&\le \mathcal O(1) \big\| e^{\alpha_1^\sharp/2 + \alpha_2^\sharp/2} e^{\Psi^\sharp_1+\Psi^\sharp_2} - \sigma_1^{1/2}\sigma_2^{1/2} e^{\Psi^\sharp_1+\Psi^\sharp_2} \big\|_{L^2(\Omega)}
+ \mathcal O(1) \big\| \sigma_1^{1/2}\sigma_2^{1/2} e^{\Psi^\sharp_1+\Psi^\sharp_2} - \sigma_1^{1/2}\sigma_2^{1/2} e^{\Psi_1+\Psi_2} \big\|_{L^2(\Omega)}\\
&\le \mathcal O(1) \big\| e^{\alpha_1^\sharp/2 + \alpha_2^\sharp/2} - e^{(\log \sigma_1)/2 +(\log\sigma_2)/2} \big\|_{L^2(\Omega)}
+ \mathcal O(1) \big\| e^{\Psi^\sharp_1+\Psi^\sharp_2} - e^{\Psi_1+\Psi_2} \big\|_{L^2(\Omega)}\\
&\le \mathcal O(1) \big\| \alpha_1^\sharp- \log \sigma_1\|_{L^2(\Omega)} + \mathcal O(1) \|\alpha_2^\sharp  - \log\sigma_2 \big\|_{L^2(\Omega)}
 + \mathcal O(1) \big\| \Psi^\sharp_1 - \Psi_1 \|_{L^2(\Omega)} + \mathcal O(1) \big\| \Psi^\sharp_2 - \Psi_2 \big\|_{L^2(\Omega)}\\[0.15cm]
&= o(1)\quad\text{as}\quad \tau\to\infty,
\end{align*}
employing the basic inequality, 
\begin{equation}\label{technical ineq from complex analysis}
|e^z - e^w| \le |z - w| e^{\max(\Re z,\Re w)},\quad z,w\in \C
\end{equation}
from \cite{krupchyk2014uniqueness}. According to these estimates, taking the limit as $\tau \to \infty$ in \eqref{integral identity to find mu}, we come to
$$
\int_{\R^3} e^{i\xi\cdot x} \frac{(\sigma_1 - \sigma_2)}{\sigma_1^{1/2} \sigma_2^{1/2}} e^{\Psi_1 + \Psi_2}\,dx = 0.
$$
Note that the integration is extended to all of $\R^3$ since $\sigma_1 - \sigma_2 = 0$ on $\R^3\setminus\overline\Omega$. This implies that $\sigma_1=\sigma_2$.\smallskip

Next, we set $\sigma = \sigma_1=\sigma_2$. By Proposition~\ref{prop CGO}, there are complex geometric optics solutions $H_1,H_2\in H^1_{\Div}(\Omega)$, with $\nabla\times H_1, \nabla\times H_2 \in H^1_{\Div}(\Omega)$ satisfying
$$
\nabla\times(\sigma^{-1}\nabla\times H_1) - i\omega \mu_1 H_1 = 0\quad\text{and}\quad \nabla\times(\sigma^{-1}\nabla\times H_2) - i\omega \mu_2 H_2 = 0 \quad\text{in}\quad\Omega,
$$
respectively, which have the following forms
$$
H_1(x;\zeta^1) = e^{i\zeta^1\cdot x} \Big(a_1 \rho + r_1\Big),\quad H_2(x;\zeta^2) = e^{-i\zeta^2\cdot x} \Big(-a_2 \rho - \frac{1}{2} b_2 \overline\rho + r_2\Big),
$$
where
$$
a_1 = e^{-\alpha^\sharp(x;\tau)/2},\quad a_2 = e^{-\alpha^\sharp(x;\tau)/2},\quad b_2 = e^{\alpha^\sharp(x;\tau)/2} e^{\Psi^\sharp(x, \rho; \tau)}.
$$
The function $\Psi^\sharp(\cdot, \rho; \tau)\in C^\infty(\R^3)$ satisfies
\begin{equation}\label{eqn 4.7}
\|\Psi^\sharp\|_{W^{1,\infty}(\R^3)} = \mathcal O(1) \quad \text{and}\quad  \|\Psi^\sharp - \Psi\|_{L^2_{\rm loc}(\R^3)} = o(1) \quad\text{as}\quad\tau\to\infty,
\end{equation}
where
$$
\Psi(\cdot, \rho) := - N_\rho^{-1} \{\overline{\rho} \cdot \nabla\log(\sigma\mu_2)^{1/2}\} \in L^\infty(\R^3),\quad j=1,2.
$$
Furthermore, the function $\alpha^\sharp(\,\cdot\,;\tau) \in C^\infty(\R^3)$ satisfies
\begin{equation}\label{eqn 4.8}
\| \alpha^\sharp \|_{W^{2,\infty}(\R^3)} = \mathcal O(1) \quad\text{and}\quad  \| \alpha^\sharp - \log\sigma \|_{W^{2,\infty}(\R^3)} = o(1) \quad\text{as}\quad\tau\to\infty.
\end{equation}
The correction terms $r_1$, $r_2 \in H_{\Div}^1(\Omega)$ satisfy
\begin{equation}\label{eqn 4.9}
\|r_j\|_{L^2(\Omega)} = o(1)\quad\text{and}\quad\|\nabla_{\zeta^j}\times r_j\|_{L^2(\Omega)} = o(\tau) \quad \text{as}\quad \tau\to\infty,\quad j=1,2.
\end{equation}
Substituting $H_1$, $H_2$ and $\sigma = \sigma_1=\sigma_2$ into \eqref{main integral identity}, we come to
\begin{align*}
-\int_\Omega (\mu_1 - \mu_2) e^{i\xi\cdot x} a_1 b_2\,dx &+ \int_\Omega (\mu_1 - \mu_2) e^{i\xi\cdot x} a_1 \rho\cdot r_2\,dx\\
-&\int_\Omega (\mu_1 - \mu_2) e^{i\xi\cdot x} r_1\cdot \Big(a_2\rho+\frac{1}{2}b_2\overline\rho\Big)\,dx + \int_\Omega (\mu_1 - \mu_2) e^{i\xi\cdot x} r_1\cdot r_2\,dx= 0.
\end{align*}
By the Cauchy-Schwartz inequality together with \eqref{eqn 4.8} and \eqref{eqn 4.9},
$$
\Big|\int_\Omega (\mu_1 - \mu_2) e^{i\xi\cdot x} a_1 \rho\cdot r_2\,dx\Big| \le \mathcal O(1) \int_\Omega | a_1 \rho\cdot r_2 |\,dx \le \mathcal O(1) \|a_1\|_{L^2(\Omega)} \|r_2\|_{L^2(\Omega)} = o(1)
$$
as $\tau\to\infty$. In a similar way, and also using \eqref{eqn 4.7}, one can show that
$$
\Big|\int_\Omega (\mu_1 - \mu_2) e^{i\xi\cdot x} r_1\cdot \Big(a_2\rho+\frac{1}{2}b_2\overline\rho\Big)\,dx\Big| = o(1)\quad\text{and}\quad \Big| \int_\Omega (\mu_1 - \mu_2) e^{i\xi\cdot x} r_1\cdot r_2\,dx \Big| = o(1)\quad\text{as}\quad \tau \to\infty.
$$
Finally, using \eqref{eqn 4.7} and \eqref{eqn 4.8},
$$
\Big| \int_\Omega (\mu_1 - \mu_2) e^{i\xi\cdot x} a_1 b_2\,dx - \int_{\R^3} (\mu_1 - \mu_2) e^{i\xi\cdot x} e^{\Psi}\,dx \Big| \le \mathcal O(1) \| e^{\Psi^\sharp} - e^{\Psi}\|_{L^2(\Omega)} \le \mathcal O(1) \|\Psi^\sharp - \Psi\|_{L^2(\Omega)} = o(1)\quad\text{as}\quad \tau \to\infty.
$$
Here, we have again employed inequality \eqref{technical ineq from complex analysis}. Thus, letting $\tau\to\infty$, we obtain
$$
\int_{\R^3} e^{i\xi\cdot x} (\mu_1 - \mu_2) e^{\Psi}\,dx = 0.
$$
The integration is extended to all of $\R^3$ since $\mu_1 - \mu_2 = 0$ on $\R^3\setminus\overline\Omega$. This implies that $\mu_1 = \mu_2$ completing the proof of Theorem~\ref{main thm}.

\section{Reflection approach} \label{section::reflection approach}

In this section, we use Isakov's reflection approach \cite{isakov2007uniqueness} to prove the following local uniqueness result where the region of the boundary that is inaccessible for measurements is a part of a plane. For a closed $\Gamma\subset\p\Omega$, define
$$
C_{\Gamma}(\sigma, \mu; \omega) := \{(\bt(H) |_{\Gamma}, \bt(E)|_{\Gamma}): (H, E)\in H^1_{\Div}(\Omega)\times H^1_{\Div}(\Omega)\text{ is a solution to \eqref{eqn::Maxwell} with }\supp(\bt(H))\subseteq\Gamma\}.
$$

\begin{Theorem}\label{main thm flat}
Let $\Omega\subset \{x\in \R^3: x_3 < 0 \}$ be a bounded domain with $C^{1,1}$ boundary and let $\Gamma_0 = \p\Omega\cap \{x\in \R^3: x_3 = 0\}$ and $\Gamma = \overline{\p\Omega\setminus \Gamma_0}$. Suppose that $\sigma_j,\mu_j\in C^2(\overline\Omega)$, $j=1,2$, satisfy $\sigma_j \ge \sigma_0$ and $\mu_j \ge \mu_0$, for some constants $\sigma_0, \mu_0 > 0$, and
\begin{equation}\label{boundary assumption on Gamma flat}
\p^\alpha \sigma_1|_{\Gamma} = \p^\alpha \sigma_2|_{\Gamma}\quad\text{and}\quad \p^\alpha \mu_1|_{\Gamma} = \p^\alpha \mu_2|_{\Gamma}\quad\text{for}\quad|\alpha| \le 2.
\end{equation}
In addition, assume that $\sigma_j$ and $\mu_j$, $j=1,2$, can be extended into $\R^3$ as $C^2$ functions which are invariant under reflection across the plane $\{x\in \R^3: x_3 = 0\}$. Then $C_\Gamma(\sigma_1, \mu_1; \omega) = C_\Gamma(\sigma_2, \mu_2; \omega)$ implies $\sigma_1=\sigma_2$ and $\mu_1=\mu_2$.
\end{Theorem}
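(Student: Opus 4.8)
The plan is to run Isakov's reflection argument \cite{isakov2007uniqueness}, reducing the local problem on $\Omega$ to the global integral identity \eqref{main integral identity} on the doubled domain. Write $\mathcal R(x',x_3)=(x',-x_3)$ for the reflection across $\{x_3=0\}$, let $\Omega^\ast=\mathcal R\Omega\subset\{x_3>0\}$, and set $\widetilde\Omega=\Omega\cup\Gamma_0^{\circ}\cup\Omega^\ast$, where $\Gamma_0^{\circ}$ is the relative interior of $\Gamma_0$. Put $P=\operatorname{diag}(1,1,-1)$ and define the field reflections $(\mathcal S_E u)(x)=Pu(\mathcal Rx)$ and $(\mathcal S_H u)(x)=-Pu(\mathcal Rx)$. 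First I would record the elementary computation that, since $\sigma_j,\mu_j$ are even under $\mathcal R$, reflection intertwines the Maxwell system: if $(H,E)$ solves \eqref{eqn::Maxwell} in $\Omega$, then $(\mathcal S_H H,\mathcal S_E E)$ solves it in $\Omega^\ast$ with the reflected (hence identical) coefficients; equivalently $\nabla\times\mathcal S_H=\mathcal S_E\,\nabla\times$, and $\mathcal S_H$ preserves the curl--curl equation \eqref{eqn::curl-curl equation}. As in the proof of Theorem~\ref{main thm}, I would extend $\sigma_j,\mu_j$ to even $C^2$ functions on $\R^3$ that are constant outside a ball and satisfy $\sigma_1=\sigma_2$, $\mu_1=\mu_2$ off $\widetilde\Omega$, so that the differences are even and supported in $\overline{\widetilde\Omega}$.

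Next I would construct symmetric complex geometric optics solutions. Proposition~\ref{prop CGO} already produces $H(\cdot;\zeta)$ solving \eqref{eqn::curl-curl equation} on all of $\R^3$; I then set $H^{\mathrm{sym}}=H+\mathcal S_H H$, which again solves \eqref{eqn::curl-curl equation} on $\R^3$ because the coefficients are even. On $\Gamma_0$ one has $\mathcal Rx=x$, so $H^{\mathrm{sym}}=(I-P)H=(0,0,2H_3)$ is purely normal, while $\nabla\times H^{\mathrm{sym}}=(I+P)(\nabla\times H)=(2(\nabla\times H)_1,2(\nabla\times H)_2,0)$ is purely tangential; hence $\bt(H^{\mathrm{sym}})=0$ on $\Gamma_0$, so that $(H^{\mathrm{sym}},\sigma^{-1}\nabla\times H^{\mathrm{sym}})|_\Omega$ belongs to the admissible class defining $C_\Gamma(\sigma,\mu;\omega)$. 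Given $C_\Gamma(\sigma_1,\mu_1;\omega)=C_\Gamma(\sigma_2,\mu_2;\omega)$, for the symmetric solution built from $(\sigma_1,\mu_1)$ there is a solution $(H',E')$ for $(\sigma_2,\mu_2)$ with $\supp(\bt(H'))\subseteq\Gamma$ having the same tangential traces on $\Gamma$. Using a second symmetric solution $(H_2^{\mathrm{sym}},E_2^{\mathrm{sym}})$ for $(\sigma_2,\mu_2)$ as a test field, I would repeat the integration by parts from the derivation of \eqref{main integral identity} in Section~\ref{section::proof of main thm}. The term $\int_{\p\Omega}\bt(H'-H_1^{\mathrm{sym}})\cdot E_2^{\mathrm{sym}}\,dS$ vanishes because $\bt(H'-H_1^{\mathrm{sym}})=0$ on all of $\p\Omega$ (the traces match on $\Gamma$ and both vanish on $\Gamma_0$), while $\int_{\p\Omega}\bt(E'-E_1^{\mathrm{sym}})\cdot H_2^{\mathrm{sym}}\,dS$ vanishes because on $\Gamma$ the tangential $E$-traces match and on $\Gamma_0$ the field $H_2^{\mathrm{sym}}$ is normal whereas $\bt(E'-E_1^{\mathrm{sym}})$ is tangential. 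This yields \eqref{main integral identity} over $\Omega$ for the symmetric fields $H_1^{\mathrm{sym}},H_2^{\mathrm{sym}}$.

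Finally I would expand $H_j^{\mathrm{sym}}=H_j+\mathcal S_H H_j$ in \eqref{main integral identity}. Because $P$ is orthogonal and the coefficient differences are even, the change of variables $x\mapsto\mathcal Rx$ turns the reflected--reflected product into the direct--direct product integrated over $\Omega^\ast$; combined with the direct--direct term over $\Omega$, this reconstitutes \eqref{main integral identity} over $\widetilde\Omega$ with the direct CGOs $H_1,H_2$, exactly as in Theorem~\ref{main thm}. The two cross terms must be shown negligible as $\tau\to\infty$: for a frequency $\xi$ with nonzero tangential part $\xi'$, choosing $\rho_2\parallel\xi\times e_3$ (tangential) and $\rho_1\parallel\xi\times\rho_2$ (so that $(\rho_1)_3\neq0$), the cross phases $\zeta^1-\mathcal R\zeta^2$ and $\mathcal R\zeta^1-\zeta^2$ equal $(\xi',0)\pm 2c\tau(\rho_1)_3\,e_3$, which oscillate in $x_3$ at rate $\sim\tau$ with modulus bounded uniformly on $\Omega$; a single integration by parts in $x_3$ (the amplitudes being controlled in $W^{1,\infty}$ by \eqref{eqn 4.2}--\eqref{eqn 4.5}) yields the gain $\mathcal O(\tau^{-1})$ and kills these terms. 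Passing to the limit as in Theorem~\ref{main thm} then gives $\int_{\R^3}e^{i\xi\cdot x}(\sigma_1-\sigma_2)\sigma_1^{-1/2}\sigma_2^{-1/2}e^{\Psi_1+\Psi_2}\,dx=0$ for all $\xi$ with $\xi'\neq0$, hence for all $\xi$ by continuity, so $\sigma_1=\sigma_2$; repeating with the second CGO family (and $\sigma:=\sigma_1=\sigma_2$) gives $\mu_1=\mu_2$. The main obstacle is precisely this last step: arranging the complex frequencies and their reflections so that the genuine and reflected CGOs reinforce at the same $\xi$ while both cross terms oscillate without exponential growth on $\Omega\subset\{x_3<0\}$ — this forces $\rho_2$ tangential and $\rho_1$ to carry the normal component — and then recovering all $\xi$ by continuity from those with $\xi'\neq0$.
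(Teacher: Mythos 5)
Your proposal is correct and follows essentially the same route as the paper's proof of Theorem~\ref{main thm flat}: the same even extension of the coefficients, the same symmetrization $H-P\,(H\circ\mathcal R)$ (the paper's $\tilde H-\tilde H^*$) which is normal on $\Gamma_0$, the same local integral identity obtained from the trace-matching/support argument, the same crucial constraint $\rho_{2,3}=0$, $\rho_{1,3}\neq 0$ on the complex frequencies, and the same change of variables $x_3\mapsto -x_3$ reassembling the Fourier identity over the doubled domain. The only deviations are cosmetic: you dispose of the oscillatory cross terms by one integration by parts in $x_3$ (which indeed gives $\mathcal O(\tau^{3\epsilon-1})=o(1)$ for the critical $\tau^2 b_1 b_2^*$ pieces, whose amplitudes are uniformly bounded in $W^{1,\infty}$), whereas the paper freezes the amplitudes using \eqref{eqn 4.2}--\eqref{eqn 4.5} and invokes the Riemann--Lebesgue lemma, and you make explicit the choice of $\rho_1,\rho_2$ for $\xi'\neq 0$ and the continuity argument recovering all $\xi$, which the paper leaves implicit.
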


\noindent
Similar results were obtained for the inverse conductivity problem in
\cite{isakov2007uniqueness} and for the IEMP in
\cite{caro2009inverse}. Consider the reflected domain
$$
\Omega^* := \{(x_1, x_2, -x_3)\in\R^3 : (x_1, x_2, x_3)\in\Omega\}
$$
and define
$$
\mathcal U := \Omega \cap \Gamma_0^{\rm int} \cap \Omega^*.
$$
By the assumptions in Theorem~\ref{main thm flat}, we can extend the coefficients $\sigma_j$ and $\mu_j$ into $\mathcal U$ as $C^2$ functions which are even with respect to $x_3$ for $j = 1,2$.  Next, by the assumption \eqref{boundary assumption on Gamma flat}, we can extend $\sigma_j$ and $\mu_j$, $j=1,2$, to $C^2$ functions defined on $\R^3$, still denoted by $\sigma_j$ and $\mu_j$, such that $\sigma_j \ge \sigma_0$, $\mu_j \ge \mu_0$ on $\R^3$, $\sigma_j - \sigma_0, \mu_j - \mu_0 \in C^2_0(\R^3)$ and $\sigma_1=\sigma_2$ and $\mu_1=\mu_2$ on $\R^3\setminus\overline{\mathcal U}$.

\begin{Proposition}\label{prop main integral identity local}
Let $\Omega\subset \{x\in \R^3: x_3 < 0 \}$ be a bounded domain with $C^{1,1}$ boundary . Let $\Gamma_0 := \p\Omega\cap \{x\in\R^3: x_3=0\}$ and $\Gamma:=\overline{\p\Omega\setminus \Gamma_0}$. Suppose that
\begin{equation}\label{local data assumption}
Z_{\sigma_1,\mu_1}^\omega(f)|_{\Gamma} = Z_{\sigma_2,\mu_2}^\omega(f)|_{\Gamma}\quad\text{for all}\quad f\in TH^{1/2}_{\Div}(\p\Omega)\quad\text{with}\quad \supp(f)\subset\Gamma;
\end{equation}
then
\begin{equation}\label{main integral identity local}
\int_\Omega (\mu_1 - \mu_2) H_1\cdot H_2\,dx + \frac{1}{i\omega}\int_\Omega \frac{(\sigma_1 - \sigma_2)}{\sigma_1\sigma_2} (\nabla\times H_1)\cdot (\nabla\times H_2)\,dx = 0
\end{equation}
for all $H_j\in H_{\Div}^1(\Omega)$ with $\nabla\times H_j\in H_{\Div}^1(\Omega)$ solving
$$
\nabla\times(\sigma_j^{-1}\nabla\times H_j) - i\omega \mu_j H_j = 0\quad\text{in}\quad\Omega,\quad j=1,2.
$$
and satisfying $\supp(\bt(H_j))\subseteq \Gamma$.
\end{Proposition}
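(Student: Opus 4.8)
The plan is to reproduce the integration-by-parts argument of the global Proposition in Section~\ref{section::proof of main thm}, the only genuinely new point being the treatment of the boundary terms in the presence of the support restriction $\supp(\bt(H_j))\subseteq\Gamma$ together with the merely partial data equality on $\Gamma$. First I would fix a solution $H_1$ as in the statement, set $E_1:=\sigma_1^{-1}\nabla\times H_1$ and $f:=\bt(H_1)$, so that $\supp(f)\subseteq\Gamma$ and $(H_1,E_1)\in H^1_{\Div}(\Omega)\times H^1_{\Div}(\Omega)$ solves the first-order system for $(\sigma_1,\mu_1)$ with $\bt(E_1)=Z^\omega_{\sigma_1,\mu_1}(f)$. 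By Theorem~\ref{thm::well posedness new version homogeneous} there is a unique $(H',E')\in H^1_{\Div}(\Omega)\times H^1_{\Div}(\Omega)$ solving the $(\sigma_2,\mu_2)$-system with $\bt(H')=f$, whence $\bt(E')=Z^\omega_{\sigma_2,\mu_2}(f)$. Consequently $\bt(H'-H_1)=0$ on all of $\p\Omega$, while the local data hypothesis \eqref{local data assumption} gives $\bt(E'-E_1)=Z^\omega_{\sigma_2,\mu_2}(f)-Z^\omega_{\sigma_1,\mu_1}(f)=0$ on $\Gamma$ (but possibly not on $\Gamma_0$).

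Next I would write the two integration-by-parts identities exactly as in the global case. Integrating $\nabla\times(H'-H_1)\cdot E_2$ by parts and using $\nabla\times E_2=i\omega\mu_2 H_2$ produces the boundary term $\int_{\p\Omega}\bt(H'-H_1)\cdot E_2\,dS$, which vanishes since $\bt(H'-H_1)=0$ everywhere on $\p\Omega$. Integrating $\nabla\times(E'-E_1)\cdot H_2$ by parts and using $\nabla\times H_2=\sigma_2 E_2$ produces the boundary term $\int_{\p\Omega}\bt(E'-E_1)\cdot H_2\,dS$, and this is the one place where the argument departs from the global proof. Using the pointwise identity $(\nu\times u)\cdot v=-(\nu\times v)\cdot u$ on $\p\Omega$ I would rewrite it as $-\int_{\p\Omega}\bt(H_2)\cdot(E'-E_1)\,dS$. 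Since $\bt(H_2)$ is tangential, only the tangential part $(E'-E_1)_T$ of $E'-E_1$ contributes; and since $\supp(\bt(H_2))\subseteq\Gamma$ the integral is effectively over $\Gamma$, where $\bt(E'-E_1)=\nu\times(E'-E_1)=0$ forces $(E'-E_1)_T=0$. Hence the integrand vanishes identically and this boundary term is zero as well. This is the main obstacle and its resolution at once: the support condition on $\bt(H_2)$ localizes the pairing to $\Gamma$, precisely where the partial data equality annihilates the tangential trace of $E'-E_1$; neither ingredient alone would suffice.

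Finally, adding the two identities and inserting the algebraic relations $\nabla\times(H'-H_1)-\sigma_2(E'-E_1)=(\sigma_2-\sigma_1)E_1$ and $\nabla\times(E'-E_1)-i\omega\mu_2(H'-H_1)=i\omega(\mu_2-\mu_1)H_1$, both immediate from the two first-order systems, collapses the volume integrals to
\[
   \int_\Omega(\sigma_2-\sigma_1)E_1\cdot E_2\,dx+i\omega\int_\Omega(\mu_2-\mu_1)H_1\cdot H_2\,dx=0.
\]
Substituting back $E_j=\sigma_j^{-1}\nabla\times H_j$, dividing by $i\omega$ and rearranging signs yields exactly \eqref{main integral identity local}. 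All the pairings above are between $TH^{1/2}_{\Div}(\p\Omega)$-traces of $H^1_{\Div}(\Omega)$-fields, so every integration by parts and every support manipulation is an honest $L^2$-computation on $\p\Omega$ with no distributional subtlety; the remaining regularity bookkeeping is routine and identical to the global setting.
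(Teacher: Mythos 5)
Your proposal is correct and follows essentially the same route as the paper: construct $(H',E')$ for the $(\sigma_2,\mu_2)$ system with $\bt(H')=\bt(H_1)$, integrate by parts twice, and kill the second boundary term by combining the support restriction $\supp(\bt(H_2))\subseteq\Gamma$ with the partial-data equality $\bt(E'-E_1)|_\Gamma=0$. The only cosmetic difference is the order in which you apply these two mechanisms (you localize to $\Gamma$ via $\bt(H_2)$ first, the paper localizes to $\Gamma_0^{\rm int}$ via the data equality first), which changes nothing.
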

\begin{proof}
Similarly as in the proof of  \eqref{main integral identity}, define
$$
E_j := \sigma_j^{-1} \nabla\times H_j,\quad j=1,2.
$$
Then $E_j\in H_{\Div}^1(\Omega)$ and $\nabla\times E_j = i\omega\mu_j H_j$. Hence $(H_j, E_j) \in H^1_{\Div}(\Omega) \times H^1_{\Div}(\Omega)$, $j=1,2$, solve
$$
\nabla\times E_j=i\omega\mu_j H_j\quad\text{and}\quad \nabla\times H_j=\sigma_j E_j \quad\text{in}\quad\Omega,\quad j=1,2.
$$
Then by the assumption \eqref{local data assumption}, there is $(H', E')\in H^1_{\Div}(\Omega) \times H^1_{\Div}(\Omega)$ with $\supp(\bt(H'))\subseteq\Gamma$ satisfying
$$
\nabla\times E'=i\omega\mu_2 H'\quad\text{and}\quad \nabla\times H'=\sigma_2 E'\quad\text{in}\quad\Omega
$$
and
$$
\bt(H')|_{\Gamma} = \bt(H_1)|_{\Gamma}\quad\text{and}\quad\bt(E')|_{\Gamma} = \bt(E_1)|_{\Gamma}.
$$
Integrating by parts, leads to
\begin{align*}
\int_\Omega \nabla\times(H'-H_1)\cdot E_2\,dx - \int_\Omega i\omega\mu_2 (H'-H_1)\cdot H_2\,dx&= \int_\Omega \nabla\times(H'-H_1)\cdot E_2\,dx - \int_\Omega (H'-H_1)\cdot \nabla\times E_2\,dx\\
& = \int_{\p\Omega} \bt (H'-H_1)\cdot E_2\,dS(x) = \int_{\Gamma_0^{\rm int}} \bt (H'-H_1)\cdot \bt(E_2)\,dS(x) = 0,
\end{align*}
since both $\bt(H')$ and $\bt(H_1)$ are supported on $\Gamma$. Similarly,
$$
\int_\Omega \nabla\times(E'-E_1)\cdot H_2\,dx - \int_\Omega \sigma_2 (E'-E_1)\cdot E_2\,dx = \int_{\Gamma_0^{\rm int}} \bt (E'-E_1)\cdot \bt(H_2)\,dS(x) = 0,
$$
since $\bt(H_2)$ is supported on $\Gamma$. The remainder of the proof of \eqref{main integral identity local} is similar to that of \eqref{main integral identity}.
\end{proof}

For $\beta:\R^3\to \C$ and $X:\R^3\to \C^3$, we define the reflections as
$$
\beta^*(x) := \beta(x_1, x_2, - x_3),\quad X^*(x) := \big(X_1(x_1, x_2, - x_3), X_2(x_1, x_2, - x_3), - X_3(x_1, x_2, - x_3)\big)
$$
with the properties,
\begin{equation}\label{eqn::reflection identities}
\nabla \beta^* = (\nabla \beta)^*,\quad (\beta X)^* = \beta^* X^*,\quad \nabla\times X^* = - (\nabla\times X)^*.
\end{equation}

\subsection* {Proof of Uniqueness}

Consider $\zeta^1$ and $\zeta^2$ defined as in the proof of Theorem~\ref{main thm}.
Then, by Proposition~\ref{prop CGO}, there are complex geometric optics solutions $\tilde H_1, \tilde H_2\in H^1_{\Div}(\mathcal U)$, with $\nabla\times \tilde H_1, \nabla\times \tilde H_2 \in H^1_{\Div}(\mathcal U)$, for
$$
\nabla\times(\sigma_1^{-1}\nabla\times\tilde H_1) - i\omega \mu_1\tilde H_1 = 0\quad\text{and}\quad \nabla\times(\sigma_2^{-1}\nabla\times\tilde H_2) - i\omega \mu_2\tilde H_2 = 0 \quad\text{in}\quad\mathcal U,
$$
respectively, which have the following forms
$$
\tilde H_1(x;\zeta^1) = e^{i\zeta^1\cdot x} \Big(a_1 \rho + \frac{1}{2} b_1 \overline\rho + r_1\Big),\quad \tilde H_2(x;\zeta^2) = e^{-i\zeta^2\cdot x} \Big(-a_2 \rho - \frac{1}{2} b_2 \overline\rho + r_2\Big),
$$
where
\begin{align*}
a_1 &= e^{-\alpha_1^\sharp(x;\tau)/2},\quad b_1 = e^{\alpha_1^\sharp(x;\tau)/2} e^{\Psi^\sharp_1(x, \rho; \tau)},\\
a_2 &= e^{-\alpha_2^\sharp(x;\tau)/2},\quad b_2 = e^{\alpha_2^\sharp(x;\tau)/2} e^{\Psi^\sharp_2(x, \rho; \tau)}.
\end{align*}
The functions $\Psi_1^\sharp(\cdot, \rho; \tau),\Psi_2^\sharp(\cdot, \rho; \tau)\in C^\infty(\R^3)$, $\alpha_1^\sharp(\,\cdot\,;\tau), \alpha_2^\sharp(\,\cdot\,;\tau) \in C^\infty(\R^3)$ and the correction terms $r_1$, $r_2 \in H^1_{\Div}(\mathcal U)$ satisfy \eqref{eqn 4.3}-\eqref{eqn 4.6}. Using \eqref{eqn::reflection identities}, it follows that
$$
\nabla\times (\sigma_j^{-1}\nabla\times \tilde H_j^*) - i\omega \mu_j \tilde H_j^* = - \nabla\times (\sigma_j^{-1}\nabla\times\tilde H_j)^* - (i\omega \mu_j \tilde H_j)^* = \big(\nabla\times (\sigma_j^{-1}\nabla\times\tilde H_j) - i\omega \mu_j\tilde H_j\big)^* = 0\quad\text{in}\quad \mathcal U.
$$
Therefore,
$$
H_1(x;\rho) := \tilde H_1(x;\rho) - \tilde H_1^*(x;\rho),\quad H_2(x;\rho) := \tilde H_2(x;\rho) - \tilde H_2^*(x;\rho).
$$
also satisfy
$$
\nabla\times(\sigma_1^{-1}\nabla\times\tilde H_1) - i\omega \mu_1\tilde H_1 = 0\quad\text{and}\quad \nabla\times(\sigma_2^{-1}\nabla\times\tilde H_2) - i\omega \mu_2\tilde H_2 = 0 \quad\text{in}\quad\mathcal U.
$$
As in the proof of Proposition~\ref{prop CGO}, it is not difficult to see that $H_1|_\Omega$ and $H_2|_\Omega$, still denoted by $H_1$ and $H_2$, respectively, belong to $H^1_{\Div}(\Omega)$. Moreover, $\nabla\times H_1, \nabla\times H_2 \in H^1_{\Div}(\Omega)$. Using the fact that $\nu = (0, 0, 1)$ on $\Gamma$, we have  $\nu\times H_1|_{\Gamma}=\nu\times H_2|_{\Gamma}=0$. Thus, $H_1$ and $H_2$ satisfy the hypotheses of Proposition~\ref{prop main integral identity local}, and hence we can substitute $H_1$ and $H_2$ into \eqref{main integral identity local}. To that end, we first compute $\nabla\times H_1$ and $\nabla\times H_2$. Using \eqref{eqn::reflection identities},
\begin{align*}
\nabla\times H_1 &= \nabla\times \tilde H_1 + (\nabla\times \tilde H_1)^* \\
&= e^{i\zeta^1\cdot x} \Big(\nabla_{\zeta^1_1}a_1\times \rho+\frac{1}{2}\nabla_{\zeta^1_1}b_1\times \overline\rho + b_1 \tau \rho_1\times \rho_2 + \nabla_{\zeta^1}\times r_1\Big)\\
&\quad + e^{i{\zeta^1}^*\cdot x} \Big((\nabla_{\zeta^1_1}a_1\times \rho)^*+\frac{1}{2}(\nabla_{\zeta^1_1}b_1\times \overline\rho)^* + b_1^* \tau (\rho_1\times \rho_2)^* + (\nabla_{\zeta^1}\times r_1)^*\Big).
\end{align*}
Similarly,
\begin{align*}
\nabla\times H_2 &= e^{-i\zeta^2\cdot x} \Big(- \nabla_{-\zeta^2_1}a_2\times \rho - \frac{1}{2}\nabla_{-\zeta^2_1}b_2\times \overline\rho + b_2 \tau \rho_1\times \rho_2 + \nabla_{-\zeta^2}\times r_2\Big)\\
&\quad + e^{-i{\zeta^2}^*\cdot x} \Big(- (\nabla_{-\zeta^2_1}a_2\times \rho)^* - \frac{1}{2}(\nabla_{-\zeta^2_1}b_2\times \overline\rho)^* + b_2^* \tau (\rho_1\times \rho_2)^* + (\nabla_{-\zeta^2}\times r_2)^*\Big).
\end{align*}
We take a closer look at the phases of products of these vector fields. We have
$$
i(\zeta^1 - \zeta^2)\cdot x = i\xi\cdot x,\quad i({\zeta^1}^* - {\zeta^2}^*)\cdot x = i\xi^*\cdot x,
$$
$$
i(\zeta^1 - {\zeta^2}^*)\cdot x = i\tilde\xi_{+}\cdot x - 2\tau\rho_{2,3} x_3 - \eta_{+},\quad i({\zeta^1}^* - \zeta^2)\cdot x = i\tilde\xi_{-}\cdot x + 2\tau\rho_{2,3} x_3 - \eta_{-},
$$
where
$$
\tilde\xi_{\pm} = \Bigg(\xi', \pm 2\tau\sqrt{1 - \frac{|\xi|^2}{4\tau^2}}\rho_{1,3}\Bigg),\quad |\tilde\xi_{\pm}|\to\infty\quad\text{as}\quad \tau\to \infty
$$
and
$$
\eta_{\pm}:= \pm\frac{2\omega \sigma_{0} \mu_{0} \rho_{1,3} x_3}{\tau\Big(\sqrt{1 - \frac{|\xi|^2}{4\tau^2} + \frac{i\omega\sigma_0\mu_0}{\tau^2}} + \sqrt{1 - \frac{|\xi|^2}{4\tau^2}}\Big)} ,\quad |\eta_{\pm}| = \mathcal O(\tau^{-1})\quad\text{as}\quad \tau\to \infty.
$$
Furthermore, we assume that $\rho_{1,3} \neq 0$ and $\rho_{2,3} = 0$. Then
\begin{align*}
\int_\Omega & e^{i\tilde\xi_{+}\cdot x - \eta_{+}} \frac{(\sigma_1 - \sigma_2)}{\sigma_1\sigma_2} \Big(\nabla_{\zeta^1_1} a_1\times \rho + \frac{1}{2}\nabla_{\zeta^1_1}b_1\times \overline\rho\Big) \cdot \Big((\nabla_{-\zeta^2_1}a_2\times \rho)^* + \frac{1}{2}(\nabla_{-\zeta^2_1}b_2\times \overline\rho)^*\Big)\,dx\\
& = \int_\Omega e^{i\tilde\xi_{+}\cdot x} \frac{(\sigma_1 - \sigma_2)}{\sigma_1\sigma_2} \Big\{e^{- \eta_{+}}\Big(\nabla_{\zeta^1_1} a_1\times \rho + \frac{1}{2}\nabla_{\zeta^1_1}b_1\times \overline\rho\Big) \cdot \Big((\nabla_{-\zeta^2_1}a_2\times \rho)^* + \frac{1}{2}(\nabla_{-\zeta^2_1}b_2\times \overline\rho)^*\Big)\\
&\qquad\qquad\qquad\qquad - \Big(\nabla_{\zeta^1_1} \mu_1^{1/2}\times \rho + \frac{1}{2}\nabla_{\zeta^1_1}(\mu^{-1/2}e^{\Psi_1})\times \overline\rho\Big) \cdot \Big((\nabla_{-\zeta^2_1}\mu_2^{1/2}\times \rho)^* + \frac{1}{2}(\nabla_{-\zeta^2_1}(\mu_2^{-1/2}e^{\Psi_2})\times \overline\rho)^*\Big)\Big\}\,dx\\
&\quad + \int_\Omega e^{i\tilde\xi_{+}\cdot x} \frac{(\sigma_1 - \sigma_2)}{\sigma_1\sigma_2} \Big(\nabla_{\zeta^1_1} \mu_1^{1/2}\times \rho + \frac{1}{2}\nabla_{\zeta^1_1}(\mu^{-1/2}e^{\Psi_1})\times \overline\rho\Big) \cdot \Big((\nabla_{-\zeta^2_1}\mu_2^{1/2}\times \rho)^* + \frac{1}{2}(\nabla_{-\zeta^2_1}(\mu_2^{-1/2}e^{\Psi_2})\times \overline\rho)^*\Big)\,dx
\end{align*}
The first integral on the right-side goes to zero as $\tau\to\infty$ according to estimates \eqref{eqn 4.2}-\eqref{eqn 4.5} and the fact that $|\eta_{\pm}| = \mathcal O(\tau^{-1})$ as $\tau\to \infty$. The second integral goes to zero as $\tau\to\infty$ by the Riemann-Lebesgue lemma. Therefore,
$$
\Big|\int_\Omega  e^{i\tilde\xi_{+}\cdot x - \eta_{+}} \frac{(\sigma_1 - \sigma_2)}{\sigma_1\sigma_2} \Big(\nabla_{\zeta^1_1} a_1\times \rho + \frac{1}{2}\nabla_{\zeta^1_1}b_1\times \overline\rho\Big) \cdot \Big((\nabla_{-\zeta^2_1}a_2\times \rho)^* + \frac{1}{2}(\nabla_{-\zeta^2_1}b_2\times \overline\rho)^*\Big)\,dx\Big| = o(1)\quad\text{as}\quad\tau\to\infty.
$$
In a similar way, one can show that
\begin{align*}
\Big|\int_\Omega e^{i\tilde\xi_{-}\cdot x - \eta_{-}} \frac{(\sigma_1 - \sigma_2)}{\sigma_1\sigma_2} \Big((\nabla_{\zeta^1_1}a_1\times \rho)^*+\frac{1}{2}(\nabla_{\zeta^1_1}b_1\times \overline\rho)^*\Big) \cdot \Big(\nabla_{-\zeta^2_1}a_2\times \rho + \frac{1}{2}\nabla_{-\zeta^2_1}b_2\times \overline\rho\Big)\,dx\Big| &= o(1)\\
\Big|\int_\Omega  e^{i\tilde\xi_{+}\cdot x - \eta_{+}} \frac{(\sigma_1 - \sigma_2)}{\sigma_1\sigma_2} \Big(\nabla_{\zeta^1_1} a_1\times \rho + \frac{1}{2}\nabla_{\zeta^1_1}b_1\times \overline\rho\Big) \cdot \Big(b_2^* \tau (\rho_1\times \rho_2)^*\Big)\,dx\Big| &= o(\tau)\\
\Big|\int_\Omega e^{i\tilde\xi_{-}\cdot x - \eta_{-}} \frac{(\sigma_1 - \sigma_2)}{\sigma_1\sigma_2} \Big(b_1^* \tau (\rho_1\times \rho_2)^*\Big) \cdot \Big(\nabla_{-\zeta^2_1}a_2\times \rho + \frac{1}{2}\nabla_{-\zeta^2_1}b_2\times \overline\rho\Big)\,dx\Big| &= o(\tau)\\
\Big|\int_\Omega  e^{i\tilde\xi_{+}\cdot x - \eta_{+}} \frac{(\sigma_1 - \sigma_2)}{\sigma_1\sigma_2} \Big(b_1 \tau \rho_1\times \rho_2\Big) \cdot \Big((\nabla_{-\zeta^2_1}a_2\times \rho)^* + \frac{1}{2}(\nabla_{-\zeta^2_1}b_2\times \overline\rho)^*\Big)\,dx\Big| &= o(\tau)\\
\Big|\int_\Omega e^{i\tilde\xi_{-}\cdot x - \eta_{-}} \frac{(\sigma_1 - \sigma_2)}{\sigma_1\sigma_2} \Big((\nabla_{\zeta^1_1}a_1\times \rho)^*+\frac{1}{2}(\nabla_{\zeta^1_1}b_1\times \overline\rho)^*\Big) \cdot \Big(b_2\tau \rho_1\times\rho_2\Big)\,dx\Big| &= o(\tau)\\
\Big|\int_\Omega  e^{i\tilde\xi_{+}\cdot x - \eta_{+}} \frac{(\sigma_1 - \sigma_2)}{\sigma_1\sigma_2} \Big(b_1 \tau \rho_1\times \rho_2\Big) \cdot \Big(b_2^*\tau (\rho_1\times\rho_2)^*\Big)\,dx\Big| &= o(\tau^2)\\
\Big|\int_\Omega  e^{i\tilde\xi_{-}\cdot x - \eta_{-}} \frac{(\sigma_1 - \sigma_2)}{\sigma_1\sigma_2} \Big(b_1^* \tau (\rho_1\times \rho_2)^*\Big) \cdot \Big(b_2\tau \rho_1\times\rho_2\Big)\,dx\Big| &= o(\tau^2)\\
\Big|\int_\Omega(\mu_1 - \mu_2) e^{i\tilde\xi_{+}\cdot x - \eta_{+}} \Big(a_1 \rho + \frac{1}{2} b_1 \overline\rho\Big)\cdot \Big(a_2^* \rho^* + \frac{1}{2} b_2^* \overline\rho^*\Big)\,dx\Big| &= o(1)\\
\Big|\int_\Omega(\mu_1 - \mu_2) e^{i\tilde\xi_{-}\cdot x - \eta_{-}} \Big(a_1^* \rho^* + \frac{1}{2} b_1^* \overline\rho^*\Big)\cdot \Big(a_2 \rho + \frac{1}{2} b_2 \overline\rho\Big)\,dx\Big| &= o(1)
\end{align*}
as $\tau\to\infty$. Now we substitute $H_1$, $H_2$, $\nabla\times H_1$ and $\nabla\times H_2$ into \eqref{main integral identity local} and divide the whole identity by $\tau^2$. According to the estimates obtained above, the terms with phases $i\tilde\xi_{\pm}\cdot x - \eta_{\pm}$ that do not involve the correction terms $r_1$ and $r_2$ go to zero as $\tau\to\infty$. The terms with phases $i\tilde\xi_{\pm}\cdot x - \eta_{\pm}$ that involve the correction terms $r_1$ and $r_2$ go to zero as $\tau\to\infty$, because of the correction terms. Finally, the terms with phases $i\xi\cdot x$ or $i\xi^*\cdot x$ can be controlled exactly as in the proof of Theorem~\ref{main thm} using \eqref{eqn 4.2} - \eqref{eqn 4.6} and \eqref{technical ineq from complex analysis}. Thus, letting $\tau\to\infty$, we obtain
$$
\int_{\Omega} e^{i\xi\cdot x} \frac{(\sigma_1 - \sigma_2)}{\sigma_1^{1/2} \sigma_2^{1/2}}e^{\Psi_1 + \Psi_2}\,dx + \int_{\Omega} e^{i\xi^*\cdot x} \Bigg(\frac{(\sigma_1 - \sigma_2)}{\sigma_1^{1/2} \sigma_2^{1/2}}e^{\Psi_1 + \Psi_2}\Bigg)^*\,dx = 0.
$$
Making the change of the variables $(x_1, x_2, x_3)\mapsto (x_1, x_2, - x_3)$ in the second integral, we come to
$$
\int_{\mathcal D} e^{i\xi\cdot x} \frac{(\sigma_1 - \sigma_2)}{\sigma_1^{1/2} \sigma_2^{1/2}} e^{\Psi_1 + \Psi_2}\,dx = 0.
$$
This integral can be extended to all of $\R^3$ since $\sigma_1 - \sigma_2 = 0$ on $\R^3\setminus\overline{\mathcal D}$. Then this will imply that $\sigma_1=\sigma_2$ in $\R^3$.\smallskip

Next, we set $\sigma = \sigma_1 = \sigma_2$. By Proposition~\ref{prop CGO}, there are complex geometric optics solutions $\tilde H_1, \tilde H_2\in H^1_{\Div}(\mathcal U)$, with $\nabla\times \tilde H_1, \nabla\times\tilde H_2 \in H^1_{\Div}(\mathcal U)$, for
$$
\nabla\times(\sigma^{-1}\nabla\times\tilde H_1) - i\omega \mu_1\tilde H_1 = 0\quad\text{and}\quad \nabla\times(\sigma^{-1}\nabla\times\tilde H_2) - i\omega \mu_2\tilde H_2 = 0 \quad\text{in}\quad\mathcal U,
$$
respectively, which have the following forms
$$
\tilde H_1(x;\zeta^1) = e^{i\zeta^1\cdot x} (a_1 \rho + r_1),\quad \tilde H_2(x;\zeta^2) = e^{-i\zeta^2\cdot x} \Big(-a_2 \rho - \frac{1}{2} b_2 \overline\rho + r_2\Big),
$$
where
$$
a_1 = e^{-\alpha^\sharp(x;\tau)/2},\quad a_2 = e^{-\alpha^\sharp(x;\tau)/2},\quad b_2 = e^{\alpha^\sharp(x;\tau)/2} e^{\Psi^\sharp(x, \rho; \tau)}.
$$
The functions $\Psi^\sharp(\cdot, \rho; \tau), \alpha^\sharp(\,\cdot\,;\tau)\in C^\infty(\R^3)$ and the correction terms $r_1$, $r_2 \in H_{\Div}^1(\mathcal U)$ satisfy \eqref{eqn 4.7} - \eqref{eqn 4.9}. In a similar way as before, one can show that
$$
H_1(x;\rho) := \tilde H_1(x;\rho) - \tilde H_1^*(x;\rho),\quad H_2(x;\rho) := \tilde H_2(x;\rho) - \tilde H_2^*(x;\rho).
$$
satisfy
$$
\nabla\times(\sigma_1^{-1}\nabla\times\tilde H_1) - i\omega \mu_1\tilde H_1 = 0\quad\text{and}\quad \nabla\times(\sigma_2^{-1}\nabla\times\tilde H_2) - i\omega \mu_2\tilde H_2 = 0 \quad\text{in}\quad\mathcal U.
$$
Also, the restrictions of $H_1$ and $H_2$ onto $\Omega$, still denoted by $H_1$ and $H_2$, respectively, belong to $H^1_{\Div}(\Omega)$ and satisfy $\nu\times H_1|_{\Gamma}=\nu\times H_2|_{\Gamma}=0$. Thus, $H_1$ and $H_2$ satisfy the hypotheses of Proposition~\ref{prop main integral identity local}. We substitude $H_1$, $H_2$ and $\sigma = \sigma_1=\sigma_2$ into \eqref{main integral identity local}. As before, we assume that $\rho_{1,3} \neq 0$ and $\rho_{2,3} = 0$. Therefor, we obtain
\begin{align*}
0 &= \int_\Omega (\mu_1-\mu_2) e^{i\xi\cdot x}(a_1\rho + r_1)\cdot \Big(-a_2\rho-\frac{1}{2}b_2\overline{\rho}+r_2\Big)\,dx\\
&\quad+\int_\Omega (\mu_1-\mu_2) e^{i\xi^*\cdot x}(a_1^*\rho^*+r_1^*)\cdot \Big(-a_2^*\rho^*-\frac{1}{2}b_2^*\overline{\rho}^*+r_2^*\Big)\,dx\\
&\quad-\int_\Omega (\mu_1-\mu_2) e^{i\tilde\xi_+\cdot x-\eta_+}(a_1\rho+r_1)\cdot \Big(-a_2^*\rho^*-\frac{1}{2}b_2^*\overline{\rho}^*+r_2^*\Big)\,dx\\
&\quad-\int_\Omega (\mu_1-\mu_2) e^{i\tilde\xi_-\cdot x-\eta_-}(a_1^*\rho^*+r_1^*)\cdot \Big(-a_2\rho-\frac{1}{2}b_2\overline{\rho}+r_2\Big)\,dx.
\end{align*}
As before, we use \eqref{eqn 4.7} - \eqref{eqn 4.8}, the fact that $|\eta_{\pm}| = \mathcal O(\tau^{-1})$ as $\tau\to \infty$ and the Riemann-Lebesgue lemma, to show that
\begin{align*}
\Big|\int_\Omega (\mu_1-\mu_2) e^{i\tilde\xi_+\cdot x-\eta_+}a_1\rho\cdot \Big(-a_2^*\rho^*-\frac{1}{2}b_2^*\overline{\rho}^*\Big)\,dx\Big| &= o(1),\\
\Big|\int_\Omega (\mu_1-\mu_2) e^{i\tilde\xi_-\cdot x-\eta_-}a_1^*\rho^*\cdot \Big(-a_2\rho-\frac{1}{2}b_2\overline{\rho}\Big)\,dx\Big| &= o(1)
\end{align*}
as $\tau\to\infty$. These estimates guarantee that the terms with phases $i\tilde\xi_{\pm}\cdot x - \eta_{\pm}$ that do not involve the correction terms $r_1$ and $r_2$ go to zero as $\tau\to\infty$. The terms with phases $i\tilde\xi_{\pm}\cdot x - \eta_{\pm}$ that involve the correction terms $r_1$ and $r_2$ also go to zero as $\tau\to\infty$ by \eqref{eqn 4.9}. Finally, the terms with phases $i\xi\cdot x$ or $i\xi^*\cdot x$ can be controlled exactly as in the proof of Theorem~\ref{main thm} using \eqref{eqn 4.7} - \eqref{eqn 4.9} and \eqref{technical ineq from complex analysis}. Thus, letting $\tau\to\infty$, we obtain
$$
\int_{\Omega} e^{i\xi\cdot x} (\mu_1 - \mu_2) e^{\Psi}\,dx + \int_{\Omega} e^{i\xi^*\cdot x} \Big((\mu_1 - \mu_2) e^{\Psi}\Big)^*\,dx = 0.
$$
Making the change of the variables $(x_1, x_2, x_3)\mapsto (x_1, x_2, - x_3)$ in the second integral, we come to
$$
\int_{\mathcal U} e^{i\xi\cdot x} (\mu_1 - \mu_2) e^{\Psi}\,dx = 0
$$
This integral can be extended to all of $\R^3$ since $\mu_1 - \mu_2 = 0$ on $\R^3\setminus\overline\Omega$. This implies that $\mu_1 = \mu_2$ completing the proof of Theorem~\ref{main thm flat}.

\section{Proof of Theorem~\ref{main thm 2}}\label{section::proof of thm 2}

Without loss of generality, we can assume that $B_0$ is the open ball of radius $1/2$ centered at $x_0 = (0,0,1/2)$ and that $0\notin \overline\Omega$. We recall that
$$
\Gamma_0 = \p\Omega\cap \p B_0,\quad \Gamma_0 \neq \p B_0\quad\text{and}\quad\Gamma = \overline{\p\Omega\setminus \Gamma_0}.
$$
We define the map
$$
K:\Omega\to \R^3\setminus\{0\},\quad K(x) := |x|^{-2}x
$$
which is known as the Kelvin transform. One can easily verify that $K^{-1}(y) = |y|^{-2}y$ for $y\in K(\Omega)$. We let $DK$ and $DK^{-1}$ denote the Jacobi matrices of $K$ and $K^{-1}$, respectively.

\smallskip

Next, we define $\widetilde\Omega := \{- y + x_0 : y\in K(\Omega)\}$ and
$$
 F : \widetilde\Omega\to \Omega,\quad F(y) := - K^{-1}(y) + x_0,\quad y\in\widetilde\Omega.
$$
Then
$$
F^{-1}(x) = - K(x-x_0),\quad x\in \Omega.
$$
It is not difficult to verify that $\widetilde\Omega\subset \{x\in
\R^3: x_3 < 0 \}$ and $\widetilde\Gamma_0:=F^{-1}(\Gamma_0)$ is a
subset of the plane $\{x\in \R^3: x_3 = 0\}$. We also write
$\widetilde\Gamma:=F^{-1}(\Gamma)$. Thus, we are in a situation when
the inaccessible part of the boundary is part of a plane. A direct
calculation gives
$$
DF^{-1} = - |x-x_0|^{-2} I + 2 |x-x_0|^{-4} (x-x_0)(x-x_0)^T ,\quad DF = - |y|^{-2} I + 2 |y|^{-4} yy^T,
$$
where $x-x_0$ and $y$ are considered as column vectors and $I$ is the $3\times 3$ identity matrix. These identities can be used to show that
\begin{equation}\label{eqn::DFDF^T}
DF=(DF)^T\quad\text{and}\quad DF(DF)^T=|y|^{-4}I
\end{equation}
and
\begin{equation}\label{eqn::key identities for the Kelvin transform}
DF^{-1}\circ F = |y|^{4} DF,\quad DF = (DF)^T \quad\text{and}\quad \det(DF) = |y|^{-6}.
\end{equation}

\begin{Lemma}\label{invariance of maxwell equation under pullback}
Let $(H_j, E_j)\in H^1_{\Div}(\Omega)\times H^1_{\Div}(\Omega)$, $j=1,2$. Consider their pullbacks onto $\Omega$,
$$
\widetilde H_j := F^* H_j,\quad \widetilde E_j := F^* E_j,\quad\widetilde\sigma_j = \sigma_j\circ F,\quad \widetilde\mu_j = \mu_j\circ F,\quad j=1,2.
$$
Then
$$
\nabla\times E_j=i\omega\mu_j H_j\quad\text{and}\quad \nabla\times H_j=\sigma_j E_j \quad\text{in}\quad\Omega
$$
if and only if
$$
\widetilde\nabla\times \widetilde E_j=i\omega|y|^{-2}\widetilde\mu_j \widetilde H_j\quad\text{and}\quad \widetilde\nabla\times \widetilde H_j=|y|^{-2}\widetilde\sigma_j\widetilde E_j \quad\text{in}\quad\widetilde\Omega.
$$
\end{Lemma}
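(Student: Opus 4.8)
The plan is to reduce the entire statement to the transformation laws for vector fields and for the curl operator under the pullback $F^*$, which are collected in Appendix~\ref{section::pullbacks}. Identifying a vector field with its associated $1$-form, the pullback acts by $F^*X=(DF)^T\,(X\circ F)$, so that $\widetilde H_j=(DF)^T(H_j\circ F)$ and $\widetilde E_j=(DF)^T(E_j\circ F)$; correspondingly, the curl — being the exterior derivative read through the volume form — obeys the (contravariant Piola) law
\begin{equation}\label{plan::curl pullback}
\widetilde\nabla\times(F^*X)=\det(DF)\,(DF)^{-1}\,\big((\nabla\times X)\circ F\big).
\end{equation}
Since $F$ is a diffeomorphism and every map occurring below is invertible, it suffices to prove one implication; the converse is obtained by reading the identical computation backwards. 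I suppress the index $j$ throughout.

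Assume the equations on $\Omega$ hold. Applying \eqref{plan::curl pullback} to $E$ and substituting $\nabla\times E=i\omega\mu H$ gives
\begin{align*}
\widetilde\nabla\times\widetilde E
&=\det(DF)\,(DF)^{-1}\big((i\omega\mu H)\circ F\big)\\
&=i\omega\,\widetilde\mu\,\det(DF)\,(DF)^{-1}\,(H\circ F),
\end{align*}
using $\widetilde\mu=\mu\circ F$. By definition $\widetilde H=(DF)^T(H\circ F)$, and since $DF$ is symmetric by \eqref{eqn::DFDF^T} — hence so is $(DF)^{-1}$ — we may write $H\circ F=(DF)^{-1}\widetilde H$. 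Collecting the matrix factors produces $\det(DF)\,(DF)^{-2}$. Now $(DF)^2=DF\,(DF)^T=|y|^{-4}I$ by \eqref{eqn::DFDF^T}, so $(DF)^{-2}=|y|^{4}I$, while $\det(DF)=|y|^{-6}$ by \eqref{eqn::key identities for the Kelvin transform}; therefore $\det(DF)\,(DF)^{-2}=|y|^{-2}I$ and
$$
\widetilde\nabla\times\widetilde E=i\omega\,|y|^{-2}\widetilde\mu\,\widetilde H.
$$
The magnetic equation is handled identically: applying \eqref{plan::curl pullback} to $H$, substituting $\nabla\times H=\sigma E$, writing $E\circ F=(DF)^{-1}\widetilde E$, and invoking the same scalar simplification $\det(DF)\,(DF)^{-2}=|y|^{-2}I$ yields $\widetilde\nabla\times\widetilde H=|y|^{-2}\widetilde\sigma\,\widetilde E$.

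The only genuinely delicate point is matching conventions: one must pull back $E$ and $H$ as $1$-forms and use the corresponding $2$-form law \eqref{plan::curl pullback} for the curl, rather than a naive componentwise pullback, so that the Jacobian weights land correctly. Once \eqref{plan::curl pullback} and the symmetry of $DF$ are in hand, the argument is pure bookkeeping driven by the two scalar identities $(DF)^2=|y|^{-4}I$ and $\det(DF)=|y|^{-6}$. I expect the substantive content to be precisely the observation that \emph{both} transformed equations acquire the same factor $|y|^{-2}$: this reflects the conformal character of the Kelvin transform encoded in $DF\,(DF)^T=|y|^{-4}I$, and it is what makes the reflection argument of Section~\ref{section::reflection approach} applicable after passing through $F$.
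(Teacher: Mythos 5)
Your proof is correct and takes essentially the same approach as the paper: the paper's own proof is a one-line remark that the lemma follows by straightforward calculation from the pullback identities of Appendix~\ref{section::pullbacks} together with \eqref{eqn::DFDF^T} and \eqref{eqn::key identities for the Kelvin transform}, and your computation is exactly that calculation carried out. The central simplification $\det(DF)\,(DF)^{-2}=|y|^{-6}\,|y|^{4}I=|y|^{-2}I$, resting on the symmetry of $DF$ and $DF\,(DF)^T=|y|^{-4}I$, is precisely what produces the common factor $|y|^{-2}$ in both transformed equations.
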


\noindent
Here and in what follows, $\widetilde\nabla\times$ denotes the curl operator with respect to the coordinates in $\widetilde\Omega$.
\begin{proof}
The claim of the lemma is easy to prove by straightforward calculations using \eqref{eqn::key identities for the Kelvin transform} and the facts from Appendix~\ref{section::pullbacks}.
\end{proof}

\begin{Lemma}\label{impedance map under pullback}
The following holds true,
\[
   Z_{|y|^{-2}\widetilde\sigma_j ,|y|^{-2}\widetilde\mu_j}^\omega(F^*f)
        = F^*\big(Z_{\sigma_j ,\mu_j}^\omega(f)\big)
\]
for all $f \in TH^{1/2}_{\Div}(\p\Omega)$, $j=1,2$.
\end{Lemma}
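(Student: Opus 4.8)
The plan is to move the boundary value problem on $\Omega$ to $\widetilde\Omega$ via the pullback $F^*$ and exploit the invariance recorded in Lemma~\ref{invariance of maxwell equation under pullback}. Fix $f \in TH^{1/2}_{\Div}(\p\Omega)$ and $j \in \{1,2\}$, and let $(H_j, E_j) \in H^1_{\Div}(\Omega) \times H^1_{\Div}(\Omega)$ be the unique solution of \eqref{eqn::Maxwell homogenous in appendix} with coefficients $\sigma_j, \mu_j$ and $\bt(H_j) = f$, so that $Z^\omega_{\sigma_j,\mu_j}(f) = \bt(E_j)$ by definition. Setting $\widetilde H_j := F^* H_j$ and $\widetilde E_j := F^* E_j$, Lemma~\ref{invariance of maxwell equation under pullback} shows that $(\widetilde H_j, \widetilde E_j)$ solves the transformed Maxwell system on $\widetilde\Omega$ with coefficients $|y|^{-2}\widetilde\sigma_j$ and $|y|^{-2}\widetilde\mu_j$; the pullback properties of Appendix~\ref{section::pullbacks} together with the identities \eqref{eqn::key identities for the Kelvin transform} guarantee that $\widetilde H_j, \widetilde E_j \in H^1_{\Div}(\widetilde\Omega)$.

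The crux is to show that the tangential trace intertwines with the pullback, namely $\bt(F^* u) = F^*(\bt(u))$ for every $u \in H^1_{\Div}(\Omega)$, where on the right $F^*$ denotes the pullback along the boundary diffeomorphism $F_{\p} := F|_{\p\widetilde\Omega} : \p\widetilde\Omega \to \p\Omega$. Reading the vector-field pullback of Appendix~\ref{section::pullbacks} as the pullback of the associated $1$-form, the heart of this identity is that pulling back a $1$-form commutes with restriction to the boundary, i.e. $\widetilde\iota^* \circ F^* = F_{\p}^* \circ \iota^*$ for the inclusions $\iota : \p\Omega \hookrightarrow \Omega$ and $\widetilde\iota : \p\widetilde\Omega \hookrightarrow \widetilde\Omega$. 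I expect this to be the main obstacle, since $\bt$ is metric-dependent — it involves $\nu$ and the cross product — so one must verify that $F^*$ respects the three defining conditions of $TH^{1/2}_{\Div}$, namely the vanishing normal component, the $H^{1/2}$ regularity, and membership of the surface divergence in $H^{1/2}$; here the conformality of $F$, encoded in $DF(DF)^T = |y|^{-4} I$ from \eqref{eqn::DFDF^T}, is what reconciles the rotation by $\nu\times$ with the pullback. Granting the identity, applying it to $H_j$ gives $\bt(\widetilde H_j) = F^* f$, and applying it to $E_j$ gives $\bt(\widetilde E_j) = F^*(\bt(E_j)) = F^*\big(Z^\omega_{\sigma_j,\mu_j}(f)\big)$.

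Finally I would identify $(\widetilde H_j, \widetilde E_j)$ as the solution defining the impedance map on the left. Since the correspondence of Lemma~\ref{invariance of maxwell equation under pullback} is a bijection compatible with the trace identity, any nontrivial solution of the transformed homogeneous problem $\bt(\widetilde H) = 0$ on $\widetilde\Omega$ would pull back to a nontrivial homogeneous solution on $\Omega$; hence $\omega$ being a non-resonant frequency for $(\sigma_j, \mu_j)$ forces it to be non-resonant for $(|y|^{-2}\widetilde\sigma_j, |y|^{-2}\widetilde\mu_j)$, so that Theorem~\ref{thm::well posedness new version homogeneous} applies on $\widetilde\Omega$. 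That theorem then makes $(\widetilde H_j, \widetilde E_j)$ the unique solution with $\bt(\widetilde H_j) = F^* f$, whence $Z^\omega_{|y|^{-2}\widetilde\sigma_j, |y|^{-2}\widetilde\mu_j}(F^* f) = \bt(\widetilde E_j)$. Combined with $\bt(\widetilde E_j) = F^*\big(Z^\omega_{\sigma_j,\mu_j}(f)\big)$ from the trace identity, this yields exactly the claimed equality.
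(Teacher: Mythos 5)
Your architecture is the same as the paper's: pull the solution pair back by $F$, invoke Lemma~\ref{invariance of maxwell equation under pullback} for the interior equations, show that the tangential trace intertwines with $F^*$, and conclude by the uniqueness in Theorem~\ref{thm::well posedness new version homogeneous} (this last bookkeeping step, including the transfer of non-resonance through the bijection of solutions, is left implicit in the paper, and you handle it correctly). The gap is that you never prove what you yourself call the crux: the trace identity $\bt(F^*u)=F^*(\bt(u))$ enters your argument prefaced by ``Granting the identity,'' while the verification of precisely this identity is the \emph{entire content} of the paper's proof. Your sketch cannot substitute for it: the commutation of $1$-form pullback with restriction to the boundary only controls the tangential part of $F^*u$, whereas $\bt(u)=\nu\times u|_{\p\Omega}$ is that tangential part followed by the metric-dependent rotation $\nu\times$. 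You assert that conformality ``reconciles'' the two but give no computation; without one the claim is unsupported, and indeed it is false for a general diffeomorphism (one needs $(DF)^TDF$ proportional to the identity), so a quantitative use of \eqref{eqn::DFDF^T} is unavoidable.

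What is missing is the following short computation, which is exactly the paper's proof. First, take a boundary defining function $\rho$ for $\p\Omega$, so that $\rho\circ F$ defines $\p\widetilde\Omega$; since \eqref{eqn::DFDF^T} gives $|(DF)^Tw|=|y|^{-2}|w|$ for every vector $w$, one gets
$$
\widetilde\nu=-\frac{(DF)^T(\nabla\rho)\circ F}{|(DF)^T(\nabla\rho)\circ F|}\bigg|_{\p\widetilde\Omega}=|y|^{2}\,F^*\Bigg(-\frac{\nabla\rho}{|\nabla\rho|}\Bigg)\bigg|_{\p\widetilde\Omega}=|y|^{2}F^*\nu .
$$
Second, by Lemma~\ref{lemma::pullback of the cross product} and \eqref{eqn::key identities for the Kelvin transform} (namely $\det(DF)=|y|^{-6}$, $DF=(DF)^T$ and hence $(DF)^{-1}=|y|^{4}(DF)^T$),
$$
\widetilde\nu\times F^*H=|y|^{2}\,F^*\nu\times F^*H=|y|^{2}\det(DF)\big((DF)^{-1}(\nu\times H)\big)\circ F=(DF)^T\big((\nu\times H)\circ F\big)=F^*(\nu\times H),
$$
and likewise with $H$ replaced by $E$. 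This also shows, as a byproduct, that $F^*$ maps tangential fields to tangential fields, which your $TH^{1/2}_{\Div}$ discussion takes for granted. With these two lines supplied, the rest of your argument goes through and coincides with the paper's.
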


\begin{proof}
Consider a $C^{1,1}$ boundary defining function $\rho$ for $\p\Omega$. Then using \eqref{eqn::DFDF^T},
$$
\widetilde\nu = - \frac{\nabla(\rho\circ F)}{|\nabla(\rho\circ F)|}\Bigg|_{\p\widetilde\Omega} = - \frac{(DF)^T (\nabla \rho)\circ F}{|(DF)^T (\nabla \rho)\circ F|}\Bigg|_{\p\widetilde\Omega} = - \frac{|y|^2 F^*(\nabla \rho)}{|(\nabla\rho)\circ F|}\Bigg|_{\p\widetilde\Omega} = |y|^2 F^*\Bigg(-\frac{\nabla \rho}{|\nabla\rho|}\Bigg)\Bigg|_{\p\widetilde\Omega} = |y|^2 F^*\nu.
$$
Using Lemma~\ref{lemma::pullback of the cross product} and \eqref{eqn::key identities for the Kelvin transform},
$$
\tilde\nu\times\tilde H_j = |y|^2 F^*\nu\times F^*H_j = |y|^{-4} \big((DF)^{-1}(\nu\times H_j)\big)\circ F = DF\,\big((\nu\times H_j)\circ F\big) = (DF)^T\big((\nu\times H_j)\circ F\big) = F^*(\nu\times H_j).
$$
Similarly, $\tilde\nu\times\tilde E_j =  F^*(\nu\times E_j)$.
\end{proof}

With $Z_{\sigma_1 ,\mu_1}^\omega = Z_{\sigma_2 ,\mu_2}^\omega$, it follows from these lemmas that
$$
Z_{|y|^{-2}\widetilde\sigma_1 ,|y|^{-2}\widetilde\mu_1}^\omega = Z_{|y|^{-2}\widetilde\sigma_2 ,|y|^{-2}\widetilde\mu_2}^\omega.
$$
Finally, by hypothesis, $\sigma_j$ and $\mu_j$, $j=1,2$, can be extended to $\R^3$ as $C^2$ functions which are invariant under reflection across $\p B_0$. This is equivalent to the invariance of such extensions of $\sigma_j$ and $\mu_j$, $j=1,2$, under the map $x\mapsto F\circ R\circ F^{-1}(x)$, where $R(y_1,y_2,y_3)=(y_1,y_2,-y_3)$. Therefore, $\tilde\sigma_j$ and $\tilde\mu_j$ can be extended into $\R^3$ as $C^2$ functions which are invariant under reflection across the plane $\{x\in\R^3: x_3=0\}$. Thus, by Theorem~\ref{main thm flat}, we get $|y|^{-2}\tilde\sigma_1=|y|^{-2}\tilde\sigma_2$ and $|y|^{-2}\tilde\mu_1=|y|^{-2}\tilde\mu_2$ in $\widetilde\Omega$. Hence, $\sigma_1=\sigma_2$ and $\mu_1=\mu_2$ in $\Omega$ as desired.

\section*{Ackkowledgments}

YA would like to thank Total E \& P Research \& Technology USA, for
financial support. MVdH was supported by the Simons Foundation under
the MATH + X program, the National Science Foundation under grant
DMS-1815143, and the corporate members of the Geo-Mathematical Imaging
Group at Rice University.

\appendix




\section{Identities with pullbacks} \label{section::pullbacks}

Suppose that $\Omega,\widetilde\Omega\subset \R^3$ are bounded domains and $F:\widetilde\Omega\to \Omega$ is a $C^1$ bijective map. For a given $u\in H^1_{\Div}(\Omega)$, the pullback $F^* u\in H^1_{\Div}(\widetilde\Omega)$ is defined as
$$
F^* u := (DF)^T(u\circ F),
$$
where $DF$ is the Jacobi matrix of $F$. According to \cite[Corollary~3.58]{monk2003finite},
$$
\widetilde\nabla\times (F^* u) = \big(\det(DF) (DF)^{-1} \nabla\times u\big)\circ F.
$$
\begin{Lemma}\label{lemma::pullback of the cross product}
Suppose $u,v\in H^1_{\Div}(\Omega)$ and $F:\widetilde\Omega\to \Omega$ is a $C^1$ bijective map as before. Then
$$
F^* u\times F^* v = \det(DF) \big((DF)^{-1}(u\times v)\big)\circ F.
$$
\end{Lemma}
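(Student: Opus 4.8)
The plan is to reduce the asserted identity to a pointwise statement in linear algebra and then verify the latter by a short determinant argument. Since the pullback acts pointwise by $F^*u = (DF)^T(u\circ F)$, and since both the cross product and precomposition with $F$ are pointwise operations, it suffices to show that for almost every $y\in\widetilde\Omega$, setting $M := DF(y)$, $a := u(F(y))$ and $b := v(F(y))$, one has
$$
   (M^T a)\times(M^T b) = \det(M)\, M^{-1}(a\times b).
$$
Granting this, the left-hand side is exactly $(F^*u\times F^*v)(y)$, while the right-hand side equals $\det(DF(y))\,(DF(y))^{-1}\big((u\circ F)\times(v\circ F)\big)(y)$; using $(u\circ F)\times(v\circ F) = (u\times v)\circ F$, this is precisely the value at $y$ of $\det(DF)\big((DF)^{-1}(u\times v)\big)\circ F$, which is the claimed formula.

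To prove the matrix identity I would first establish the more symmetric version $(Ma)\times(Mb) = \det(M)\,(M^{-1})^T(a\times b)$ for an arbitrary invertible $M$, and then substitute $M\mapsto M^T$, using $\det(M^T)=\det(M)$ and $((M^T)^{-1})^T = M^{-1}$, to recover the form above. For the symmetric version I would pair both sides with an arbitrary vector $c\in\R^3$ through the scalar triple product. On one hand,
$$
   \big[(Ma)\times(Mb)\big]\cdot(Mc) = \det\big[\,Ma\mid Mb\mid Mc\,\big] = \det(M)\,\det\big[\,a\mid b\mid c\,\big] = \det(M)\,(a\times b)\cdot c,
$$
where $[\,\cdot\mid\cdot\mid\cdot\,]$ denotes the matrix with the indicated columns. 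On the other hand, the same quantity equals $\big(M^T[(Ma)\times(Mb)]\big)\cdot c$. Since $c\in\R^3$ is arbitrary, we conclude $M^T[(Ma)\times(Mb)] = \det(M)(a\times b)$, and multiplying through by $(M^T)^{-1}$ gives the symmetric identity.

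I do not anticipate any serious obstacle: the computation is elementary, and the only points meriting care are bookkeeping. First, the transpose appearing in the definition $F^*u = (DF)^T(u\circ F)$ must be tracked correctly, which is why the clean $(Ma)\times(Mb)$ form is applied with $M = (DF)^T$. Second, one must observe that the cross product commutes with precomposition by $F$, i.e.\ $(u\times v)\circ F = (u\circ F)\times(v\circ F)$. Finally, the identity is meaningful exactly where $DF$ is invertible; for the Kelvin-type map $F$ used in the main text this is all of $\widetilde\Omega$, since $\det(DF) = |y|^{-6}\neq 0$ by \eqref{eqn::key identities for the Kelvin transform}, so that both sides are well-defined elements of $H^1_{\Div}(\widetilde\Omega)$ and the a.e.\ pointwise equality upgrades to equality of functions.
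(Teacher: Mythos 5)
Your proposal is correct, but it follows a genuinely different route from the paper's own proof. The paper works componentwise: it expands $u_iv_j - v_iu_j$ in terms of $(F^*u)_k(F^*v)_l - (F^*v)_k(F^*u)_l$, packages both sides into the skew-symmetric matrices $A$ and $B$ satisfying the congruence relation $A = (DF)^{-T}B(DF)^{-1}$, and then invokes the statement immediately preceding \cite[Corollary~3.58]{monk2003finite}, which encodes how the vector associated with a skew-symmetric matrix transforms under such a congruence. You instead isolate the pointwise matrix identity $(M^Ta)\times(M^Tb) = \det(M)\,M^{-1}(a\times b)$ and prove it directly by pairing with an arbitrary vector $Mc$ via the scalar triple product and the multilinearity of the determinant; substituting $M\mapsto M^T$ then gives exactly the form needed for $F^*u = (DF)^T(u\circ F)$, and your transpose bookkeeping is correct. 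What each approach buys: yours is self-contained and elementary, requiring no external citation, while the paper's is shorter on the page because it outsources the linear-algebra step to Monk's book --- the same reference already used for the curl transformation formula, which keeps the appendix tied to a single source. The two arguments are equivalent at bottom (the skew-symmetric congruence relation is the triple-product identity in disguise), and both implicitly require invertibility of $DF$, a point you correctly flag and which holds for the Kelvin-type map since $\det(DF) = |y|^{-6}\neq 0$.
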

\begin{proof}
Let $x$ and $y$ be coordinate systems in $\Omega$ and $\widetilde\Omega$, respectively. By definition,
$$
u_i = \sum_{k=1}^3 \frac{\p y_k}{\p x_i} (F^* u)_k\circ F^{-1}\quad\text{and}\quad v_j = \sum_{l=1}^3 \frac{\p y_l}{\p x_j} (F^* v)_l\circ F^{-1}.
$$
Then
$$
u_i v_j - v_i u_j = \sum_{k=1}^3 \sum_{l=1}^3  \frac{\p y_k}{\p x_i} \big((F^* u)_k (F^* v)_l - (F^* v)_k (F^* u)_l\big)\circ F^{-1} \frac{\p y_l}{\p x_j} .
$$
If $A$ and $B$ are $3\times 3$ matrices defined as
$$
A_{ij} = (u_i v_j - v_i u_j)\circ F\quad\text{and}\quad B_{kl} = (F^* u)_k (F^* v)_l - (F^* v)_k (F^* u)_l,
$$
then the above identity can be rewritten as
$$
A = (DF)^{-T} B (DF)^{-1}.
$$
Clearly, both $A$ and $B$ are skew symmetric. According to the statement right before \cite[Corollary~3.58]{monk2003finite}, this completes the proof.
\end{proof}

Let $\rho\in C^{0,1}(\R^3;\R)$ be a boundary defining function for $\p\Omega$, i.e. $\Omega = \{x\in\R^3: \rho(x)>0\}$ and $\p\Omega = \{x\in\R^3: \rho(x)=0\}$. Then $\rho\circ F$ is a boundary defining function for $\p\widetilde\Omega$. Recall that the outer unit normals to $\p\Omega$ and $\p\widetilde\Omega$ are defined as
$$
\nu := - \frac{\nabla\rho}{|\nabla\rho|}\Bigg|_{\p\Omega}\quad\text{and}\quad \widetilde\nu := - \frac{\nabla(\rho\circ F)}{|\nabla(\rho\circ F)|}\Bigg|_{\p\widetilde\Omega},
$$
respectively.

\section{Identification of the impedance map $Z_{\sigma,\mu}^\omega$ with a
  pseudodifferential operator}
\label{section::Appendix B}

Let $\Omega\subset\R^3$ be a bounded domain with $C^\infty$
boundary. Here, we assume that $\sigma,\mu\in
C^\infty(\overline\Omega)$ such that $\sigma\ge\sigma_0$ and
$\mu\ge\mu_0$ for some constants $\sigma_0,\mu_0>0$. The aim of this
appendix is to show that the impedance map $Z^\omega_{\sigma,\mu}$ is
a pseudodifferential operator of order $1$ following
\cite{mcdowall1997boundary}. Then we study the connection of
$Z^\omega_{\sigma,\mu}$ to notion of apparent resistivity.

\smallskip

We work in a small neighborhood of a point $p\in \p\Omega$. Without loss of generality, we fix a coordinate system near $p$ such that that $p=0$ and that $\Omega\subset\{x\in\R^3: x_3>0\}$ and $\p\Omega\subset \{x\in\R^3: x_3=0\}$ near $p$. As explained in Section~\ref{section::CGOs}, the equation \eqref{eqn::Maxwell} is equivalent to
\begin{align}
L_{\sigma,\mu}(x,D)H:=-\Delta H - \nabla(\nabla\beta \cdot H) - \nabla\alpha \times \nabla\times H - i\omega \sigma \mu H = 0\quad\text{in}\quad\Omega, \label{LH Appendix B}\\
\nabla\cdot(\mu H) = 0\quad\text{in}\quad \Omega, \label{div free Appendix B}
\end{align}
where $\alpha = \log\sigma$ and $\beta = \log\mu$. We use the notation $D_j=-i\p_j$, $j=1,2,3$,  $x'=(x_1,x_2)$ and $D'=(D_1,D_2)$. Then the equation \eqref{LH Appendix B} can we rewritten as
$$
L_{\sigma,\mu}(x,D)H = (D_3^2 - \Delta' - i M(x,D') - i N(x) D_3 - (\nabla\nabla\beta+i\omega\sigma\mu)) H = 0\quad\text{in}\quad\Omega,
$$
where $\Delta'=\p_1^2+\p_2^2$ and
$$
M = \left(\begin{matrix}
\p_1\beta D_1 - \p_2\alpha D_2 & (\p_2\beta + \p_2\alpha) D_1 & (\p_3\beta + \p_3\alpha) D_1 \\
(\p_1\beta + \p_1\alpha) D_2 & \p_2\beta D_2 - \p_1\alpha D_1 & (\p_3\beta + \p_3\alpha) D_2 \\
0 & 0 & - \p_2\alpha D_2 - \p_1\alpha D_1
\end{matrix}\right),\quad
N = \left(\begin{matrix}
-\p_3\alpha & 0 & 0 \\
0 & -\p_3\alpha & 0 \\
\p_1\beta + \p_1\alpha & \p_2\beta + \p_2\alpha & - \p_3\beta
\end{matrix}\right).
$$
Suppose that $H\in H_{\Div}^1(\Omega)$ solves \eqref{LH Appendix B} and \eqref{div free Appendix B}. Then the impedance map $Z^\omega_{\sigma,\mu}$ near $p$, in the above mentioned coordinates, has the form
\begin{equation}\label{form of the impedance map in Appendix B}
(\nu\times H)|_{\p\Omega} = (H_2, - H_1, 0)|_{x_3=0}\overset{Z^\omega_{\sigma,\mu}}{\mapsto} (\nu\times\sigma^{-1} \nabla\times H)|_{\p\Omega} = \sigma^{-1}(\p_3 H_1 - \p_1 H_3, \p_3 H_2 - \p_2 H_3, 0)|_{x_3=0},
\end{equation}
since $\nu = (0, 0, -1)$. To deal with the appearances of $\p_3 H_j$ and $\p_j H_3$, $j=1,3$, in the above expression, we need the following two results, which can be proven exactly as in Proposition~1 and Proposition~2 of \cite{mcdowall1997boundary}.

\begin{Proposition}\label{lemma b1}
There is a $3\times 3$ matrix-valued pseudodifferential operator $B = B(x,D')$ of order $1$ in $x'$, depending smoothly on $x_3$, such that
$$
L_{\sigma,\mu}(x,D) \equiv (D_3 - iN(x) - iB(x,D'))(D_3 + iB(x,D')).
$$
The principal symbol of $B(x,D')$ is $-|\xi'| I$.
\end{Proposition}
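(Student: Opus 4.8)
The plan is to construct $B$ by the standard symbol-factorization argument of \cite{mcdowall1997boundary} (in the spirit of Lee--Uhlmann), matching full symbols order by order and reducing the problem to a matrix Riccati equation solved recursively by the pseudodifferential symbol calculus; throughout, $\equiv$ means equality modulo a smoothing operator. First I would expand the proposed product. Treating $N=N(x)$ as an order-zero multiplication operator and writing $\p_{x_3}B$ for the operator whose symbol is $\p_{x_3}b$, the commutator $[D_3,B] = -i\,\p_{x_3}B$ shows that the two cross terms $iBD_3$ cancel, leaving
\begin{equation*}
(D_3 - iN - iB)(D_3 + iB) = D_3^2 - iN D_3 + \p_{x_3}B + NB + B^2 .
\end{equation*}
Comparing this with $L_{\sigma,\mu}(x,D) = D_3^2 - iND_3 - \Delta' - iM - (\nabla\nabla\beta + i\omega\sigma\mu)$ and cancelling the common terms $D_3^2 - iND_3$, the asserted factorization is equivalent to the symbolic Riccati equation
\begin{equation*}
B^2 + NB + \p_{x_3}B \equiv -\Delta' - iM - (\nabla\nabla\beta + i\omega\sigma\mu) .
\end{equation*}

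Next I would solve for the full symbol $b(x,\xi')\sim\sum_{j\le 1}b_j$, with $b_j$ homogeneous of degree $j$ in $\xi'$, using the composition formula $\sigma(B^2)\sim\sum_\gamma\frac{1}{\gamma!}\p_{\xi'}^\gamma b\,D_{x'}^\gamma b$ together with $\sigma(NB)=Nb$ (as $N$ is $\xi'$-independent). Matching the degree-$2$ part gives $b_1^2 = |\xi'|^2 I$; since the principal part $D_3^2-\Delta'$ of $L_{\sigma,\mu}$ is scalar, I take the scalar root $b_1 = -|\xi'|I$, which is exactly the claimed principal symbol. Because $b_1$ is $x$-independent and a scalar multiple of $I$, at the next order the terms $\p_{x_3}b_1$ and $\sum_{j}\p_{\xi_j}b_1\,D_{x_j}b_1$ vanish, and matching the degree-$1$ part reduces to $-2|\xi'|\,b_0 - |\xi'|N = -iM(x,\xi')$, whence
\begin{equation*}
b_0 = \frac{i\,M(x,\xi')}{2|\xi'|} - \frac{N}{2},
\end{equation*}
where $M(x,\xi')$ denotes the (degree-one) symbol of $M(x,D')$. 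More generally, collecting the component of degree $2-k$ yields an equation of the form $-2|\xi'|\,b_{1-k} = G_k$, where $G_k$ is a universal expression in the previously determined symbols $b_1,\dots,b_{2-k}$, their $x$- and $\xi'$-derivatives, and the smooth coefficients of $M$, $N$, $\nabla\nabla\beta$ and $\omega\sigma\mu$. Dividing by $-2|\xi'|$, invertible for $\xi'\neq 0$, determines each $b_{1-k}$ uniquely and smoothly in $(x,\xi')$ for $\xi'\neq0$, homogeneous of degree $1-k$. A Borel/asymptotic summation then produces a symbol $b\sim\sum_{j\le1}b_j$ and an associated operator $B=B(x,D')$ of order $1$ in $x'$, depending smoothly on $x_3$, for which the Riccati equation, and hence the factorization, holds modulo smoothing.

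The key structural point that makes the recursion succeed is that the principal symbol $b_1=-|\xi'|I$ is scalar, so the Sylvester-type map $X\mapsto b_1X+Xb_1$ appearing at every order is simply multiplication by $2b_1=-2|\xi'|I$, which is trivially invertible on $\xi'\neq0$; for a genuinely matrix-valued principal symbol one would instead need to solve a Sylvester equation, whose solvability would require a spectral separation between $b_1$ and $-b_1$. Here that obstruction is absent, so the main—though essentially routine—difficulty will be the bookkeeping of the noncommutative matrix symbol calculus: the composition formula for $B^2$ generates many lower-order terms, and one must track them together with the explicit matrix $N$, which is precisely what absorbs the first-order-in-$D_3$ term $-iND_3$ of $L_{\sigma,\mu}$ and allows the left factor $D_3-iN-iB$ to close the algebra. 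I would also verify smoothness of the constructed symbols in $x_3$ up to the boundary, as this is what is needed when the factorization is later used to identify $Z^\omega_{\sigma,\mu}$ as a pseudodifferential operator.
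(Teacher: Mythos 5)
Your proposal is correct and takes essentially the same route as the paper, which does not spell out the argument but defers to Propositions~1 and~2 of the cited work of McDowall — and that reference's proof is exactly your order-by-order symbol factorization: expand the product to get the matrix Riccati equation $B^2+NB+\p_{x_3}B\equiv -\Delta'-iM-(\nabla\nabla\beta+i\omega\sigma\mu)$, take the scalar root $b_1=-|\xi'|I$, and solve recursively for the lower-order symbols (each step invertible because $b_1X+Xb_1=-2|\xi'|X$), followed by asymptotic summation. Your expansion, the formula $b_0=\tfrac{i}{2}|\xi'|^{-1}M(x,\xi')-\tfrac{1}{2}N$, and the remark on smoothness in $x_3$ up to the boundary all check out.
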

Here and in what follows $\equiv$ denotes an equality modulo a smoothing operator and $\xi_j$ is the dual variable to $D_j$.

\begin{Proposition}\label{lemma b2}
If $H\in H_{\Div}^1(\Omega)$ solves \eqref{LH Appendix B}, then $\p_3 H|_{\p\Omega} \equiv BH|_{\p\Omega}$.
\end{Proposition}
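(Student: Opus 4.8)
The plan is to reduce Proposition~\ref{lemma b2} to a one-sided smoothing statement for the ``outgoing'' factor furnished by Proposition~\ref{lemma b1}. Since $D_3=-i\p_3$, one has $iw=\p_3 H-B(x,D')H$ for $w:=(D_3+iB(x,D'))H$, so the asserted relation $\p_3 H|_{\p\Omega}\equiv B(x,D')H|_{\p\Omega}$ is equivalent to $w|_{\p\Omega}\equiv 0$. First I would apply the factorization $L_{\sigma,\mu}(x,D)\equiv(D_3-iN-iB)(D_3+iB)$ to the solution $H$ of \eqref{LH Appendix B}: writing $L_{\sigma,\mu}=(D_3-iN(x)-iB(x,D'))(D_3+iB(x,D'))+S$ with $S$ smoothing, I obtain
$$
(D_3-iN(x)-iB(x,D'))\,w=L_{\sigma,\mu}(x,D)H-SH=-SH .
$$
Because $H\in H^1_{\Div}(\Omega)$ is a distribution, $g:=-SH\in C^\infty$, so $w$ solves the first-order equation $(D_3-iN-iB)w=g$ with smooth right-hand side in $\Omega$, which near $p$ is the half-space $\{x_3>0\}$.

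The next step is to exploit the sign in the factor $P_-:=D_3-iN-iB$. Since $B$ has principal symbol $-|\xi'|I$ and $N$ is of order zero, $P_-$ has principal symbol $\xi_3+i|\xi'|I$, whose root in $\xi_3$ sits strictly in the lower half-plane at $\xi_3=-i|\xi'|$. Rewriting $P_- w=g$ as the $x_3$-evolution $\p_3 w=-(N+B)w+ig$ and freezing coefficients in $x'$, the homogeneous modes behave like $e^{|\xi'|x_3}$, i.e.\ they grow as $x_3$ increases into $\Omega$. This is precisely the parabolically ill-posed direction, so a solution $w$ of merely finite Sobolev order on a slab $\{0<x_3<\epsilon\}$ cannot support any such growing high-frequency content. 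Making this quantitative, I would construct the forward Poisson-type parametrix for $P_-$ on $\{x_3>0\}$, exactly as for the scalar factor in \cite{mcdowall1997boundary} and in the boundary-determination arguments of Sylvester--Uhlmann and Lee--Uhlmann, and use its one-sided smoothing property to conclude that the high-frequency part of $w|_{x_3=0}$ decays faster than any power. Hence $w|_{\p\Omega}\in C^\infty$, that is $w|_{\p\Omega}\equiv 0$, and substituting back gives $\p_3 H|_{\p\Omega}\equiv B(x,D')H|_{\p\Omega}$.

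The hard part is the one-sided smoothing claim in the second step: showing rigorously that the trace of the exponentially growing factor is smooth even though $H$ is only $H^1$. The difficulty is that $P_-$ is a genuine $3\times3$ system with variable coefficients, so the forward propagator is an $x_3$-ordered (non-commutative) exponential, and one must verify that the zeroth-order matrix $N(x)$ together with the lower-order part of $B(x,D')$ does not overturn the sign of $\Im$ of the principal symbol that drives the smoothing. Because these terms leave the principal symbol $\xi_3+i|\xi'|$ untouched, the decoupling into a genuinely smoothing forward solution operator persists, and the scalar argument of \cite{mcdowall1997boundary} transfers verbatim to the present system.
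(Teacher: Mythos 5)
Your proposal is correct and coincides with the paper's own treatment: the paper proves this proposition simply by appealing to Propositions~1 and~2 of \cite{mcdowall1997boundary}, and your argument---setting $w=(D_3+iB)H$, using the factorization of Proposition~\ref{lemma b1} to get $(D_3-iN-iB)w\equiv 0$, and invoking the one-sided (Lee--Uhlmann type) smoothing of the factor whose characteristic root $\xi_3=-i|\xi'|$ makes modes grow like $e^{|\xi'|x_3}$ into $\Omega$---is exactly the argument behind those cited propositions. Your remark that the zeroth-order matrix $N$ and the lower-order part of $B$ cannot overturn the smoothing is also the right justification, since McDowall's argument is already carried out for a system with scalar principal symbol.
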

It follows from Proposition~\ref{lemma b2} that
\begin{equation}\label{eqn 3 in Appendix B}
\p_3 H_j \equiv B_{j1}H_1 + B_{j2}H_2 + B_{j3}H_3,\quad j=1,2,3.
\end{equation}
Also, the equation \eqref{div free Appendix B} can be written as
$$
\p_3 H_3 + \p_3\beta H_3 = - (\p_1\beta + \p_1) H_1 - (\p_2\beta + \p_2) H_2.
$$
Combining these two, we obtain
$$
(B_{33} + \p_3\beta)H_3 \equiv - (B_{31} + \p_1\beta + \p_1) H_1 - (B_{32} + \p_2\beta + \p_2) H_2.
$$
We write $J(x,D') = B_{33}(x,D') + \p_3\beta(x)$. Then
\begin{equation}\label{eqn 4 in Appendix B}
JH_3 \equiv - (B_{31} + \p_1\beta + \p_1) H_1 - (B_{32} + \p_2\beta + \p_2) H_2.
\end{equation}
Let $K(x,D')$ be a pseudodifferential operator of order $-1$ in $x'$, depending smoothly on $x_3$, such that $KJ \equiv \id$, where $\id$ is the identity operator. Clearly, the principal symbol  of $J(x,D')$ is $-|\xi'|$. To calculate the principal symbol of $K(x,D')$, we use the identity $KJ \equiv \id$ at the principal symbol level. It follows then from \cite[Corollary~8.38]{folland1995introduction} and \cite[Corollary~8.32]{folland1995introduction} that the principal symbol of $K(x,D')$ is $- |\xi'|^{-1}$.\smallskip

Using \eqref{eqn 3 in Appendix B} and \eqref{eqn 4 in Appendix B}, one can show that $Z_{\sigma,\mu}^\omega$ is a $2\times 2$ matrix-valued pseudodifferential operator of order $1$ such that
\begin{align*}
(Z_{\sigma,\mu}^\omega)_{j1} &\equiv \sigma^{-1} B_{j2} + \sigma^{-1} (\p_j - B_{j3}) \circ K \circ (B_{32} + \p_2\beta + \p_2),\\
(Z_{\sigma,\mu}^\omega)_{j2} &\equiv - \sigma^{-1} B_{j1} - \sigma^{-1} (\p_j - B_{j3}) \circ K \circ (B_{32} + \p_2\beta + \p_2)
\end{align*}
for $j=1,2$. From this, one can show that the principal symbol of $Z_{\sigma,\mu}^\omega$ is
$$
\frac{1}{\sigma(x',0)|\xi'|}\left(\begin{matrix}
\xi_1\xi_2 & \xi_2^2\\
- \xi_1^2 & - \xi_1\xi_2
\end{matrix}\right).
$$
The expression for the impedance map can be used to define apparent resistivity, as in \cite{grandis1999bayesian,thiel2008modelling, weckmann2003images}.

\begin{Remark}\label{remark in Appendix B}{\rm
As one can see, the principal symbol of $Z_{\sigma,\mu}^\omega$ determines $\sigma|_{\p\Omega}$ and all its tangential derivatives on $\p\Omega$. Studying the asymptotic expansion of the full symbol of $Z_{\sigma,\mu}^\omega$ as in \cite{mcdowall1997boundary}, one can determine $\p^\gamma \sigma|_{\p\Omega}$ and $\p^\gamma \mu|_{\p\Omega}$ for all multi-indices $\gamma$. To that end, one needs to perform a more detailed analysis of the asymptotic expansions of the full symbols of $B$, constructed in Proposition~\ref{lemma b1}, and $K$. 
}\end{Remark}

\bibliographystyle{siam}
\bibliography{Bibliography}

\end{document}